\documentclass{amsart}
\vfuzz2pt 
\hfuzz2pt 
\newtheorem{thm}{Theorem}[section]
\newtheorem{cor}[thm]{Corollary}
\newtheorem{lem}[thm]{Lemma}
\newtheorem{prop}[thm]{Proposition}
\theoremstyle{definition}
\newtheorem{defn}[thm]{Definition}
\theoremstyle{remark}
\newtheorem{rem}[thm]{Remark}
\numberwithin{equation}{section}
\newcommand{\norm}[1]{\left\Vert#1\right\Vert}
\newcommand{\abs}[1]{\left\vert#1\right\vert}
\newcommand{\set}[1]{\left\{#1\right\}}

\newcommand{\eps}{\varepsilon}

\newcommand{\dbar}{\bar\partial}
\newcommand{\ddbar}{\partial\bar\partial}
\newcommand{\dom}[1]{\mathrm{Dom}(#1)}

\newcommand{\re}{\mathrm{Re}}
\newcommand{\im}{\mathrm{Im}}
\DeclareMathOperator{\Tr}{Tr}

\newcommand{\p}{\partial}

\begin{document}

\title[Closed Range in Stein Manifolds]{Closed Range for $\dbar$ and $\dbar_b$ on Bounded Hypersurfaces in Stein Manifolds}%
\author{Phillip S. Harrington and Andrew Raich}%
\address{SCEN 301, 1 University of Arkansas, Fayetteville, AR 72701}%
\email{psharrin@uark.edu, araich@uark.edu}%

\thanks{The first author is partially supported by NSF grant DMS-1002332 and the second author is partially supported by NSF grant DMS-0855822}%
\subjclass[2010]{Primary 32W05, Secondary 32W10, 32Q28, 35N15}
\keywords{Stein manifold, $\dbar_b$, tangential Cauchy-Riemann operator, closed range, $\dbar$-Neumann, weak $Z(q)$, $q$-pseudoconvexity}

\begin{abstract}We define weak $Z(q)$, 
a generalization of $Z(q)$ on bounded domains $\Omega$ in a Stein manifold $M^n$ that suffices to prove closed range of $\dbar$.
Under the hypothesis of weak $Z(q)$, we also show (i)
that harmonic $(0,q)$-forms are trivial and (ii) if $\partial\Omega$ satisfies weak $Z(q)$ and weak $Z(n-1-q)$, then $\dbar_b$ has closed range on $(0,q)$-forms on $\p\Omega$. We provide examples to show
that our condition contains examples that are excluded from $(q-1)$-pseudoconvexity and the authors' previous notion of weak $Z(q)$.
\end{abstract}
\maketitle
\section{Introduction}
The purpose of this article is to establish sufficient conditions for the closed range of $\dbar$ (and $\dbar_b$) on not necessarily pseudoconvex domains (and their boundaries)
in Stein manifolds. We pay particular attention to keeping the boundary regularity at a minimum; our results holds for $C^3$ boundaries. In \cite{HaRa11}, we develop a notion
of weak $Z(q)$ for which we can prove closed range of $\dbar_b$ for smooth bounded CR manifolds of hypersurface type in $\mathbb C^n$. In this paper, we generalize our notion of weak $Z(q)$ and relax
the smoothness assumption. The microlocal analysis technique of \cite{HaRa11} requires significant boundary smoothness, and to replace the microlocal analysis, we assume that
our CR manifold is the boundary of a bounded domain in a Stein manifold and attain the closed range of $\dbar_b$  as a consequence of the $\dbar$-theory which we prove. Our analysis
of $\dbar_b$ is in the spirit of \cite{Shaw03}. Additionally, we show that the weak $Z(q)$-hypothesis is sufficient to show that harmonic forms vanish at level $(0,q)$. Finally, we provide examples
to show that our new condition is more general than either $(q-1)$-pseudoconvexity \cite{Zam08} or the weak $Z(q)$-hypothesis of \cite{HaRa11}.

Let $\Omega\subset\mathbb{C}^n$ be a bounded domain with a $C^2$ defining function $\rho$.  The Levi form of $\partial\Omega$ is the form
\[
  \mathcal{L}_\rho(t)=\sum_{j,k=1}^n \frac{\partial^2\rho}{\partial z_j\partial\bar{z}_k}t_j\bar{t}_k,
\text{where }
  \sum_{j=1}^n\frac{\partial\rho}{\partial z_j} t_j=0.
\]
If the Levi form is positive semi-definite for all boundary points, we say $\Omega$ is \emph{pseudoconvex}.
Suppose that $f$ is a $(0,q)$-form on $\Omega$ with components in $L^2$.  If $\dbar f=0$, where $\dbar$ is the Cauchy-Riemann operator, we wish to know whether there exists a $(0,q-1)$-form $u
$ with $L^2$ components such that $\dbar u=f$.  When $\Omega$ is a pseudoconvex domain, this question was answered in the affirmative for all $1\leq q\leq n$ by H\"ormander in \cite{Hor65}.  In
fact, pseudoconvexity is a necessary condition to solve $\dbar$ in $L^2$ for all $1\leq q\leq n$.

If we only wish to solve $\dbar$ for a fixed value of $q$, pseudoconvexity is no longer necessary.  If the Levi form has at least $n-q$ positive eigenvalues or at least $q+1$ negative eigenvalues, we say that
$\partial\Omega$ satisfies $Z(q)$.  It is known that the $\dbar$ problem can be solved in $L^2$ if $\partial\Omega$ satisfies $Z(q)$. In fact, $Z(q)$ is equivalent to the solvability
of $\dbar u=f$ if the components of $u$ are required to be elements of the $L^2$ Sobolev space $W^{1/2}$ (see \cite{Hor65}, Theorem 3.2.2 in \cite{FoKo72}, or \cite{AnGr62}).

If we allow the Levi form to degenerate as in the pseudoconvex case, solvability is less well understood.  One candidate that has been studied in some detail is $q$-pseudoconvexity (see \cite{Zam08}).  In
$q$-pseudoconvexity, a subbundle of the tangent bundle of at most dimension $q$ exists locally with the property that the sum of the $q+1$ smallest eigenvalues of the Levi form is greater than or equal to
the trace of the Levi form with respect to the subbundle.  As shown in Theorem 1.9.9 in \cite{Zam08}, this implies that for $L^2$ $(0,q+1)$-forms $f$ in the kernel of $\dbar$, there exists an $L^2$ $(0,q)$-
form $u$ solving $\dbar u=f$.  To be consistent with the notation convention in $Z(q)$, we will typically refer to this as $(q-1)$-pseudoconvexity.

In this paper, we will generalize $(q-1)$-pseudoconvexity as follows.  Taking the trace of the Levi form with respect to a vector bundle can be thought of in local coordinates as taking the trace of the Levi
form with respect to a projection matrix (i.e., a hermitian matrix with eigenvalues of 0 or 1).  We will relax this condition by allowing eigenvalues that are between 0 and 1.  This gives us needed flexibility, as
demonstrated by an example in Proposition \ref{prop:key_counterexample} in which the rank of the identity minus our matrix is forced to be locally nonconstant.  By analogy with the nondegenerate case, we
will call such domains weak $Z(q)$ domains (see Definition \ref{defn:weak_Zq} for a formal definition).  The example in Proposition \ref{prop:key_counterexample} thus illustrates that weak $Z(q)$ is a
strictly weaker condition than any previously known condition.

We also allow for the boundary to be disconnected, using techniques developed for the annulus between two pseudoconvex domains in \cite{Shaw85a}.  These techniques allow us to solve $\dbar$ modulo
the space of harmonic $(0,q)$-forms in some weighted $L^2$ space.  By adapting recent arguments of Shaw \cite{Shaw10}, we are able to show that the space of harmonic $(0,q)$-forms in fact vanishes,
which allows us to use H\"ormander's methods to obtain results in unweighted $L^2$ spaces.  Our main $L^2$-result is thus the following:
\begin{thm}
\label{thm:main_theorem1}
  Let $M$ be an $n$-dimensional Stein manifold, and let $\Omega$ be a bounded subset of $M$ with $C^3$ boundary satisfying weak $Z(q)$ for some $1\leq q\leq n-1$.  Then we have
  \begin{enumerate}
    \item The space of harmonic $(0,q)$-forms $\mathcal{H}^q(\Omega)$ is trivial.

    \item The $\dbar$-Laplacian $\Box^q$ has closed range in $L^2_{(0,q)}(\Omega)$.

    \item The $\dbar$-Neumann operator $N^q$ exists and is continuous in $L^2_{(0,q)}(\Omega)$.

    \item The operator $\dbar$ has closed range in $L^2_{(0,q)}(\Omega)$ and $L^2_{(0,q+1)}(\Omega)$.

    \item The operator $\dbar^*$ has closed range in $L^2_{(0,q)}(\Omega)$ and $L^2_{(0,q-1)}(\Omega)$.
  \end{enumerate}
\end{thm}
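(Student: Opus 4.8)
The plan is to establish the five statements essentially in the order (1), then (4)–(5), then (2)–(3), since the vanishing of harmonic forms is the engine that drives everything else. First I would set up the weighted $L^2$ framework: fix a strictly plurisubharmonic exhaustion $\varphi$ of the Stein manifold $M$ (available since $M$ is Stein) and work with weights $e^{-t\varphi}$ for large parameter $t$. Using the weak $Z(q)$ hypothesis together with the Bochner–Kohn–Morrey–Hörmander formula, one obtains the basic a priori estimate $\|f\|_t^2 \lesssim \|\dbar f\|_t^2 + \|\dbar^* f\|_t^2$ for $(0,q)$-forms $f$ in the domain of $\dbar^*$, provided $t$ is chosen large enough that the contribution of the strictly plurisubharmonic weight dominates the (controlled) negative part of the Levi form allowed by weak $Z(q)$. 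The point of weak $Z(q)$ — allowing eigenvalues strictly between $0$ and $1$ in the comparison matrix — is precisely that this trace-type inequality still closes up after adding the weight, even though no genuine subbundle splitting is available; this is where the $C^3$ regularity of $\partial\Omega$ enters, to guarantee the boundary term in the Kohn–Morrey–Hörmander identity has the right sign modulo lower-order terms that the weight absorbs.

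Granting the basic estimate, closed range of $\Box^q$ in the \emph{weighted} space $L^2_{(0,q)}(\Omega, e^{-t\varphi})$ and existence of a bounded weighted $\dbar$-Neumann operator follow by the standard functional-analytic argument (the estimate shows $\Box^q$ is bounded below on the orthogonal complement of its kernel, and the kernel is the weighted harmonic space). The crucial step is then (1): showing the harmonic space is actually trivial. Here I would adapt Shaw's argument from \cite{Shaw10}: a harmonic $(0,q)$-form $h$ satisfies $\dbar h = 0$ and $\dbar^* h = 0$ with the $\dbar^*$-boundary condition, and on a Stein manifold one can use the global solvability of $\dbar$ on $M$ (or on a slightly larger Stein domain) together with a density/approximation argument to produce a primitive, forcing $h \equiv 0$. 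Once $\mathcal{H}^q(\Omega) = \{0\}$, the weighted estimate upgrades to the \emph{unweighted} estimate by Hörmander's method — the weights were only ever a device, and with no harmonic obstruction one can run the H\"ormander existence theorem in plain $L^2$ — giving (2) closed range of $\Box^q$ in $L^2_{(0,q)}(\Omega)$ and (3) the bounded $\dbar$-Neumann operator $N^q$. Statements (4) and (5) are then formal: closed range of $\dbar$ on $(0,q)$- and $(0,q+1)$-forms, and of $\dbar^*$ on $(0,q)$- and $(0,q-1)$-forms, follow from the closed range of $\Box^q$ via the standard Hodge-type decomposition $L^2_{(0,q)}(\Omega) = \range{\dbar} \oplus \range{\dbar^*} \oplus \mathcal{H}^q(\Omega)$ and the identity $\range{\dbar}$ closed $\iff$ $\range{\dbar^*}$ closed.

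I expect the main obstacle to be step (1), the vanishing of harmonic forms, because this is exactly the place where one cannot argue purely locally or by a soft functional-analytic estimate: one must exploit the global geometry of the Stein manifold (and handle the possibly disconnected boundary, as in \cite{Shaw85a}) to rule out $L^2$ harmonic $(0,q)$-forms. The delicate points are (a) making the approximation/regularization legitimate given only $C^3$ boundary regularity, so that integration by parts and the boundary conditions are justified, and (b) ensuring the argument is uniform enough across connected components of $\partial\Omega$. A secondary technical obstacle is verifying that the weak $Z(q)$ inequality, stated in terms of a hermitian matrix with eigenvalues in $[0,1]$, genuinely produces a sign-definite lower bound in the Kohn–Morrey–Hörmander identity after the weight is inserted — i.e., that the interplay between the "fractional" comparison matrix and the Hessian of $t\varphi$ works out with the correct constants independent of the boundary point.
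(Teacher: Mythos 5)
Your high-level architecture (weighted basic estimate, then vanishing of harmonic forms via Shaw, then pass to unweighted $L^2$) matches the paper's, but two of your steps have gaps that are substantive rather than cosmetic.

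First, the weight. You propose working with $e^{-t\varphi}$ for a single exhaustion $\varphi$. This cannot work when $\partial\Omega$ is disconnected. By Lemma \ref{lem:connected_decomposition}, $\Omega = \Omega_1\setminus\bigcup_{j\geq 2}\overline\Omega_j$, and $\omega(\Upsilon)$ lies strictly below $q$ on $\partial\Omega_1$ but strictly above $q$ on the inner boundary components. In the Morrey--Kohn--H\"ormander inequality the surviving good term is $\pm t\bigl((q-\omega(\Upsilon^\pm))f,f\bigr)_t$, and its sign flips between the two regimes; the paper therefore takes $\phi=\chi\varphi-(1-\chi)\varphi$ so that $i\ddbar\phi=\pm\omega$ near the two families of components and the term comes out positive in both. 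With a single convex weight you lose control near the inner components and the basic estimate fails there. Relatedly, the clean estimate $\norm{f}_t^2 \lesssim \norm{\dbar f}_t^2 + \norm{\dbar^*_t f}_t^2$ for \emph{all} $f\in\dom\dbar\cap\dom\dbar^*_t$ only holds when the boundary is connected (Proposition \ref{prop:basic_estimates}\,(3)); in general one either must allow a compact ($W^{-1}$) error term or restrict to $(\mathcal H^q_t)^\perp$. Your proposal assumes the clean estimate in general.

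Second, and more seriously, your argument for $\mathcal{H}^q(\Omega)=\{0\}$ is not actually an argument. You say one can "use the global solvability of $\dbar$ on $M$ together with a density/approximation argument to produce a primitive, forcing $h\equiv 0$," but the issue is precisely that the primitive must live in $L^2(\Omega)$ and must be compatible with the $\dbar^*$-boundary condition across a disconnected boundary. The paper's route is genuinely two-stage: it first proves Sobolev estimates for the weighted $\dbar$-Neumann operator on connected-boundary pieces (Theorem \ref{thm:Sobolev_Estimates}, which is Theorem \ref{thm:main_theorem2} with the connectedness hypothesis), so as to obtain the $\dbar$-Cauchy solvability with compact support in $W^{-1/2}$ (Corollary \ref{cor:compact_support_solvability}) on the inner weak $Z(n-q-1)$ domains, and only then runs Shaw's regularization, in which a $W^{-1}$ solution on $\Omega_1$ is split into a piece near $\partial\Omega_1$ and a compactly supported piece and each is corrected separately. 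This is why the paper explicitly remarks that Theorem \ref{thm:main_theorem2} is needed to prove Theorem \ref{thm:main_theorem1} when the boundary is disconnected. Your proposal inverts this dependency by treating (1) as a free-standing step, and so the most difficult part of the proof is left unsupported. Finally, the passage from weighted to unweighted harmonic spaces is not just "H\"ormander's method": the paper invokes that $\mathcal H^q_t(\Omega)$ is isomorphic to a Dolbeault cohomology group independent of $t$, so vanishing for one $t$ forces vanishing for all $t$, including $t=0$.
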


We work in Stein manifolds motivated by results in \cite{HaLa75}.  The $C^3$ boundary is needed for our method of proof because of additional integration by parts that are carried out to handle the nonpositive eigenvalues of the Levi form.  Unfortunately, while the results of Theorem \ref{thm:main_theorem1} do not depend on the metric, our condition appears to depend on the metric (see Proposition \ref{prop:metric_counterexample}).  See \cite{Str11} for discussion of analogous difficulties surrounding the apparent metric dependence of Property $(P_q)$.  This is another benefit of working in generic Stein manifolds, since our example in Section \ref{sec:example} requires a non-Euclidean metric.  Our condition also implies stronger results which do depend on the metric.  Let $\varphi$ be a global plurisubharmonic exhaustion function for $M$, and let the metric for $M$ be given by the K\"ahler form $\omega=i\ddbar\varphi$.  For a weight function $\phi$ which is chosen to equal $\pm\varphi$ in a neighborhood of each connected component of $\partial\Omega$, we can define the weighted $L^2$-norm $\norm{f}^2_t=\int_\Omega e^{-t\phi}\abs{f}^2 dV$.  For sufficiently large $t>0$, we will be able to obtain Sobolev space estimates for the weighted $\dbar$-Neumann operator defined with respect to this norm.
\begin{thm}
\label{thm:main_theorem2}
  Let $M$ be an $n$-dimensional Stein manifold, and let $\Omega$ be a bounded subset of $M$ with $C^3$ boundary satisfying weak $Z(q)$ for some $1\leq q\leq n-1$.  Then there exists a constant $\tilde t>0$ such that for all $t>\tilde t$  and $-\frac{1}{2}\leq s\leq 1$ we have
  \begin{enumerate}
    \item The weighted $\dbar$-Neumann operator $N^q_t$ exists and is continuous in $L^2_{(0,q)}(\Omega)$.

    \item The canonical solution operators to $\dbar$ given by\\
    $\dbar^*_t N_t^q:W^s_{(0,q)}(\Omega)\rightarrow W^s_{(0,q-1)}(\Omega)$ and $N_t^q\dbar^*_t:W^s_{(0,q+1)}(\Omega)\rightarrow W^s_{(0,q)}(\Omega)$ are continuous.

    \item The canonical solution operators to $\dbar^*_t$ given by\\
    $\dbar N_t^q:W^s_{(0,q)}(\Omega)\rightarrow W^s_{(0,q+1)}(\Omega)$ and $N_t^q\dbar:W^s_{(0,q-1)}(\Omega)\rightarrow W^s_{(0,q)}(\Omega)$ are continuous.

    \item For every $f\in W^s_{(0,q)}(\Omega)\cap\ker\dbar$ there exists a $u\in W^s_{(0,q-1)}(\Omega)$ such that $\dbar u=f$.
  \end{enumerate}
\end{thm}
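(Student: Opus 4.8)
The plan is to derive everything from a single weighted a priori estimate and then propagate it to Sobolev spaces, first by commuting tangential derivatives and then by duality. The first step is to record the weighted basic estimate that underlies Theorem~\ref{thm:main_theorem1}: with the K\"ahler metric $\omega=i\ddbar\varphi$ and a weight $\phi$ that equals $\pm\varphi$ near each component of $\p\Omega$, the weighted Bochner--Kohn--Morrey identity for the energy form $Q_t(u,u)=\norm{\dbar u}_t^2+\norm{\dbar^*_t u}_t^2$, after the additional integration by parts that the $C^3$ boundary permits in order to absorb the nonpositive Levi eigenvalues, yields a $\tilde t>0$ such that
\[
  \norm{u}_t^2\le C\,Q_t(u,u),\qquad u\in\dom{\dbar}\cap\dom{\dbar^*_t},\ u\text{ a }(0,q)\text{-form},
\]
for all $t>\tilde t$. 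H\"ormander's lemma then gives the existence and $L^2_{(0,q)}(\Omega)$-boundedness of $N^q_t$, which is part (1), and simultaneously the $L^2$-boundedness of $\dbar^*_t N^q_t$, $N^q_t\dbar^*_t$, $\dbar N^q_t$, $N^q_t\dbar$ --- the $s=0$ cases of (2) and (3). The same inequality forces the weighted harmonic space at level $q$ to vanish for $t>\tilde t$, so $\dbar$ and $\dbar^*_t$ have closed range and $L^2_{(0,q)}(\Omega)=\overline{\range{\dbar}}\oplus\overline{\range{\dbar^*_t}}$ orthogonally; hence $u=\dbar^*_t N^q_t f$ solves $\dbar u=f$ for every $f\in L^2_{(0,q)}(\Omega)\cap\ker\dbar$, which is (4) with $s=0$.

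For $0<s\le1$ I would run an elliptic regularization in the spirit of Kohn--Nirenberg together with a weighted Sobolev estimate in the spirit of Boas--Straube: replace $Q_t$ by the elliptically regularized form obtained by adding $\delta$ times the full Dirichlet form, solve the regularized problem to get smooth operators $N^q_{t,\delta}$, and prove bounds uniform in $\delta$. Because $\phi=\pm\varphi$ near $\p\Omega$ and the metric is $\omega=i\ddbar\varphi$, the zeroth order curvature term produced by $t\,i\ddbar\phi$ in the weighted Morrey--Kohn formula is sign-definite and of size comparable to $t$ near the boundary; after localizing with a partition of unity and applying a tangential pseudodifferential operator $\Lambda^s$ of order $s\le1$, the commutator $[\Box^q_t,\Lambda^s]$ and the boundary terms created by $\Lambda^s$ can be absorbed by this curvature term once $t$ is large relative to the $C^1$-norms of the curvature of $\omega$, of $i\ddbar\phi$, and of the second fundamental form of $\p\Omega$. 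Recovering the normal derivatives from the equation $\Box^q_t(N^q_t f)=f$ and from the $\dbar$-Neumann boundary conditions then gives $\norm{N^q_t f}_{W^s}\le C_t\norm{f}_{W^s}$, and the analogous estimates for $\dbar^*_t N^q_t$, $N^q_t\dbar^*_t$, $\dbar N^q_t$, $N^q_t\dbar$ on $0\le s\le1$ --- here one estimates the weighted \emph{minimal} solution operators directly rather than trying to pass a derivative gain through $N^q_t$. I expect this step near $s=1$ to be the main obstacle: one has to commute a tangential derivative through the second order system $\Box^q_t$ and carry out the resulting integrations by parts while $\p\Omega$ is only $C^3$, so the error terms involve third derivatives of the defining function and must be arranged so that those derivatives land on factors that are already controlled --- this is exactly what caps the range at $s=1$ and what forces the $C^3$ (rather than merely $C^2$) hypothesis.

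For the remaining range $-\tfrac12\le s<0$ I would argue by duality and interpolation. The operator $N^q_t$ is self-adjoint for the weighted inner product $\langle\cdot,\cdot\rangle_t$, and in that pairing the first order solution operators are, modulo the (trivial) harmonic projection, adjoints of one another: $(\dbar^*_t N^q_t)^*=N^q_t\dbar$ and $(\dbar N^q_t)^*=N^q_t\dbar^*_t$. Hence the bounds on $0\le s\le1$ dualize to bounds on $-1\le s\le0$ for the partner operators, and interpolation against the $s=0$ case fills the intermediate range; restricting to $s\ge-\tfrac12$ keeps the dualization compatible with the trace and boundary-condition structure of $\dom{\dbar^*_t}$ on a $C^3$ domain. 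Putting the pieces together gives (1)--(3) for $-\tfrac12\le s\le1$, and (4) then follows immediately: for $f\in W^s_{(0,q)}(\Omega)\cap\ker\dbar$ the weighted Hodge decomposition shows that $u=\dbar^*_t N^q_t f$ satisfies $\dbar u=f$, and $u\in W^s_{(0,q-1)}(\Omega)$ by (2).
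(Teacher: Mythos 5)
Your proposal gets the broad structure right --- a basic weighted estimate, elliptic regularization for $0\le s\le 1$, then duality and interpolation for $-\tfrac12\le s<0$ --- and that is indeed the skeleton of Section~\ref{sec:Sobolev}. But there is a genuine gap at the very first step, and it is precisely the point the paper singles out in the introduction as making Theorem~\ref{thm:main_theorem2} and Theorem~\ref{thm:main_theorem1} intertwined.

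You begin from the estimate $\norm{u}_t^2\le C\,Q_t(u,u)$ for all $u\in\dom{\dbar}\cap\dom{\dbar^*_t}$ and $t>\tilde t$, and then assert that this ``forces the weighted harmonic space at level $q$ to vanish.'' That estimate is only what Proposition~\ref{prop:basic_estimates} delivers when $\partial\Omega$ is \emph{connected} (part~\eqref{item:connected_boundary_estimate}). When $\partial\Omega$ is disconnected, the Morrey--Kohn--H\"ormander argument localizes with a partition of unity over $U^+,U^-,U^0$; the compactly supported piece $f^0$ is handled by G\aa{}rding's inequality and produces only the weaker estimate of part~\eqref{item:closed_range_estimate}, with an additional $C_\eps\norm{f}^2_{t,W^{-1}}$ on the left. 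Consequently $\mathcal{H}^q_t(\Omega)$ is a priori only finite-dimensional, not zero, and the unconditional coercivity you start from would already presuppose the vanishing you are trying to conclude. The reasoning is circular. The paper resolves this by proving Theorem~\ref{thm:Sobolev_Estimates} \emph{first} under the connected-boundary hypothesis, using that through Corollary~\ref{cor:compact_support_solvability} (solvability of the $\dbar$-Cauchy problem on weak $Z(n-q-1)$ holes at Sobolev level $-\tfrac12\le s<\tfrac12$), combining with the decomposition Lemma~\ref{lem:connected_decomposition} and a Shaw-type jump argument to establish $\mathcal{H}^q_t(\Omega)=\set{0}$ even for disconnected boundaries, and only then running the Sobolev argument again without the connectedness hypothesis. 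Your proposal needs this loop; as written it does not close.

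Two smaller remarks. First, on a $C^3$ boundary the paper deliberately works with actual tangential \emph{differential} operators $D_T$ (built from a $C^2$ orthonormal coframe) and recovers fractional $s$ by interpolation; using a tangential pseudodifferential operator $\Lambda^s$ directly, as you propose, requires care about how much symbol regularity the $C^3$ boundary supports, so it is cleaner here to prove integer estimates and interpolate. Second, your duality step is essentially what the paper does --- $N^q_t$ is self-adjoint and $(\dbar^*_tN^q_t)^*_t=N^q_t\dbar$, $(\dbar N^q_t)^*_t=N^q_t\dbar^*_t$ --- and the restriction to $s\ge-\tfrac12$ is correct, but you should make explicit that the $0\le s\le 1$ bounds for \emph{all four} operators must be obtained before dualizing, since otherwise the negative range is not fully covered.
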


Interestingly, Theorem \ref{thm:main_theorem2} is actually needed to prove Theorem \ref{thm:main_theorem1} when the boundary is disconnected; see Section \ref{sec:proofs_main_theorems} for details.
The estimates of Theorem \ref{thm:main_theorem2} were carried out by the first author
in \cite{Har09} for pseudoconvex domains with $C^2$ boundary; again, the additional integration by parts needed for weak $Z(q)$ seem to require an additional degree of smoothness.  Furthermore, the elliptic regularization carried out in Section \ref{sec:Sobolev} seems to require a $C^3$ boundary.

When $\partial\Omega$ satisfies weak $Z(q)$ and weak $Z(n-q-1)$, we say that $\partial\Omega$ satisfies \emph{weak $Y(q)$}.
 In \cite{HaRa11}, the authors continued work of \cite{Nic06} and \cite{Rai10c} to understand solvability for the boundary operator $\dbar_b$ on $CR$-manifolds of hypersurface type.  The definition given for weak $Y(q)$ in that paper is completely superseded by the definition in the present paper.  When our bounded weak $Y(q)$ manifold is an actual hypersurface in a Stein manifold, we now have the following result:
\begin{thm}
\label{thm:main_theorem3}
  Let $M$ be an $n$-dimensional Stein manifold, and let $\Omega$ be a bounded subset of $M$ with connected $C^3$ boundary satisfying weak $Y(q)$ for some $1\leq q<n-1$.  For every $f\in L^2_{(0,q)}(\partial\Omega)\cap\ker\dbar_b$ there exists a $u\in L^2_{(0,q-1)}(\partial\Omega)$ satisfying $\dbar_b u=f$.  Hence, $\dbar_b$ has closed range in $L^2_{(0,q)}(\partial\Omega)$.
\end{thm}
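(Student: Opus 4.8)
The plan is to reduce the closed range of $\dbar_b$ on $\partial\Omega$ to the $L^2$ and Sobolev theory for $\dbar$ on two domains — $\Omega$ itself and an exterior annulus — and to patch $\dbar$-solutions along $\partial\Omega$ by a jump formula, in the spirit of \cite{Shaw03}, \cite{Shaw85a}. Fix a smooth strictly plurisubharmonic exhaustion $\varphi$ of $M$ and choose $R$ large enough that $\Omega_R:=\{\varphi<R\}$ is a smooth strictly pseudoconvex domain containing $\overline\Omega$; set $\Omega^-:=\Omega_R\setminus\overline\Omega$. The key geometric observation is that $\Omega$ and $\Omega^-$ both satisfy weak $Z(q)$: for $\Omega$ this is the hypothesis, while $\partial\Omega^-$ has the two components $\partial\Omega_R$, along which $\Omega^-$ is strictly pseudoconvex (hence weak $Z(q)$ for every $1\le q\le n-1$), and $\partial\Omega$, along which a defining function for $\Omega^-$ is $-\rho$, so the Levi form there is $-\mathcal L_\rho$ and weak $Z(q)$ is equivalent to weak $Z(n-1-q)$ for $\Omega$ — the remaining half of weak $Y(q)$. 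Since $1\le q<n-1$ puts both $q$ and $n-1-q$ in $[1,n-1]$, Theorems \ref{thm:main_theorem1} and \ref{thm:main_theorem2} apply to each of $\Omega$ and $\Omega^-$; note that $\Omega^-$ has disconnected boundary, so this is exactly the configuration in which Theorem \ref{thm:main_theorem2} is invoked inside the proof of Theorem \ref{thm:main_theorem1}. In particular, on each of $\Omega$, $\Omega^-$ we have $\mathcal H^q=0$ and closed range for $\dbar$, so every $\dbar$-closed form in $L^2_{(0,q)}$ is $\dbar$-exact, and the weighted canonical solution operator $\dbar^*_t N^q_t$ is bounded $W^s\to W^s$ for $-\tfrac12\le s\le1$.

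Next comes the jump decomposition, carried out first for smooth $f\in L^2_{(0,q)}(\partial\Omega)\cap\ker\dbar_b$. Using the $C^3$ boundary, extend $f$ to a smooth $(0,q)$-form $\tilde f$ in a neighborhood of $\partial\Omega$ with tangential part $f$ on $\partial\Omega$, cut off by a function $\chi\equiv1$ near $\partial\Omega$ and supported in that neighborhood, and form the current $F:=\chi_\Omega\,\chi\tilde f$ on $\Omega_R$, i.e. $\chi\tilde f$ extended by zero across $\partial\Omega$. Then $\dbar F=\widehat G+T$, where $\widehat G$ equals $\dbar(\chi\tilde f)$ on $\Omega$ and $0$ on $\Omega^-$, so $\widehat G\in L^2_{(0,q+1)}(\Omega_R)$, and $T$ is a current supported on $\partial\Omega$ encoding $f$. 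Because $\dbar^2F=0$ and the $\partial\Omega$-supported current $\dbar\widehat G$ is proportional to $\dbar\rho\wedge(\dbar\tilde f)_{\mathrm{tan}}|_{\partial\Omega}=\dbar\rho\wedge\dbar_b f=0$, the form $\widehat G$ is $\dbar$-closed; solving $\dbar w=\widehat G$ on the strictly pseudoconvex $\Omega_R$, with the usual $\tfrac12$-derivative gain, yields $w\in W^s_{(0,q)}(\Omega_R)$ for some $s>\tfrac12$. Then $f^+:=(\chi\tilde f-w)|_\Omega$ and $f^-:=-w|_{\Omega^-}$ are $\dbar$-closed on $\Omega$ and $\Omega^-$ respectively, lie in $W^s_{(0,q)}$, and — since $w$ is a single form on $\Omega_R$ whose two one-sided traces on $\partial\Omega$ agree — satisfy $(f^+)_{\mathrm{tan}}-(f^-)_{\mathrm{tan}}=f$ on $\partial\Omega$.

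Now solve on each side and patch. Because $\mathcal H^q(\Omega^\pm)=0$, the canonical solutions $u^\pm:=\dbar^*_t N^q_t f^\pm$ satisfy $\dbar u^\pm=f^\pm$, and by Theorem \ref{thm:main_theorem2} they lie in $W^s_{(0,q-1)}(\Omega^\pm)$ with $s>\tfrac12$; hence each $u^\pm$ has a boundary trace in $L^2_{(0,q-1)}(\partial\Omega)$, and $u:=(u^+)_{\mathrm{tan}}-(u^-)_{\mathrm{tan}}$ is a well-defined element of $L^2_{(0,q-1)}(\partial\Omega)$. Taking tangential parts along $\partial\Omega$ of $\dbar u^\pm=f^\pm$ gives $\dbar_b u=(f^+)_{\mathrm{tan}}-(f^-)_{\mathrm{tan}}=f$, proving the existence statement for smooth $f$. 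For general $f\in L^2_{(0,q)}(\partial\Omega)\cap\ker\dbar_b$ one argues by approximation together with the uniform estimates of Theorem \ref{thm:main_theorem2}, which keep the resulting solutions bounded in $L^2(\partial\Omega)$. Once every $\dbar_b$-closed form is $\dbar_b$-exact, $\mathrm{Range}(\dbar_b)=\ker\dbar_b$, which is closed since $\dbar_b$ is a closed operator; hence $\dbar_b$ has closed range.

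The main obstacle is regularity at the boundary: a generic $L^2$ solution of $\dbar u^\pm=f^\pm$ on $\Omega^\pm$ has no boundary trace at all, so the patching step is impossible with Theorem \ref{thm:main_theorem1} alone — the Sobolev estimates of Theorem \ref{thm:main_theorem2} are exactly what rescue it. The two points requiring real care are (i) setting up the construction over $\Omega_R$ so that $\widehat G$ is genuinely $\dbar$-closed and $w$ (and hence $f^\pm$) gains strictly more than $\tfrac12$ of a derivative, which is where the $C^3$ hypothesis on $\partial\Omega$ enters; and (ii) the passage from smooth to general $L^2$ data, since the solution operator assembled above is not bounded on $L^2(\partial\Omega)$ as written, so one must extract the limit using the a priori bounds of Theorem \ref{thm:main_theorem2}.
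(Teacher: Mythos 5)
Your construction of the jump decomposition is genuinely different from the paper's, and the distinction matters.

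The paper does not use an extension-and-cutoff construction: it defines $f^\pm$ directly on the two sides of $\partial\Omega$ via the Martinelli--Bochner--Koppelman transform from \cite{HeLe81} together with the jump formula of \cite{Lau87}. The crucial feature of that integral operator is a mapping property that your construction lacks: for $f\in L^2_{(0,q)}(\partial\Omega)$ the MBK transform produces one-sided forms $f^\pm$ that are $\dbar$-closed, have nontangential $L^2$ boundary limits, and therefore have components in $W^{1/2}(\Omega^\pm)$, all with norms controlled by $\|f\|_{L^2(\partial\Omega)}$. This is exactly the regularity threshold at which the Sobolev estimates of Theorem \ref{thm:main_theorem2} give $L^2(\partial\Omega)$ traces for $u^\pm$, so the whole proof runs directly on $L^2$ data with no smoothing and no density argument for $\ker\dbar_b$. (The paper then corrects $f^+$ to a $\dbar$-closed form on all of $B$ by solving a $\dbar$-Cauchy problem via Corollary \ref{cor:compact_support_solvability} — this is where weak $Z(n-q-1)$ enters — and solves on $B$ and on $\Omega$; you instead solve on $\Omega$ and on the annulus $\Omega_R\setminus\overline\Omega$, which is a legitimate alternative configuration and uses weak $Z(n-q-1)$ via Proposition \ref{prop:Yq_motivation} to get weak $Z(q)$ for the annulus.)

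The gap in your proposal is the passage from smooth $f$ to general $L^2$ data, which you flag but which is not merely a technicality to be deferred. Your decomposition $F=\chi_\Omega\chi\tilde f$, $\widehat G$, $w$ produces forms whose Sobolev norms scale with $\|\tilde f\|_{W^1}$ (because $\widehat G$ involves $\dbar\tilde f$), and there is no bounded extension $L^2(\partial\Omega)\to W^1$, nor even $L^2(\partial\Omega)\to W^{1/2}(\Omega_R)$ since the trace map is not onto at the endpoint. So the estimate $\|u\|_{L^2(\partial\Omega)}\le C\|f\|_{L^2(\partial\Omega)}$ that would be needed to pass to a limit does not come out of your construction. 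Moreover, the approximating sequence would need to consist of \emph{$\dbar_b$-closed} smooth forms so that the boundary-supported part of $\dbar\widehat G$ vanishes exactly, not just approximately; density of smooth $\dbar_b$-closed forms in $L^2\cap\ker\dbar_b$ is itself entangled with closed range for $\dbar_b$, which is what you are trying to prove. The fix is to adopt an integral-kernel jump decomposition (MBK or equivalent) as the paper does, so that the one-sided pieces have $W^{1/2}$ interior regularity directly from $L^2$ boundary data. The rest of your outline — the identification of weak $Z(q)$ for both $\Omega$ and the annulus, the use of $\mathcal H^q=0$ and the $W^s$-bounded canonical solution operators, the trace-and-difference definition of $u$ and the computation $\dbar_b u=f$ — is correct.
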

Additional assumptions seem necessary for $q=n-1$, even in the pseudoconvex case (see \cite{ChSh01} for details).

\section{Basic Properties and Notation}

Let $M$ be an $n$-dimensional Stein manifold, $n\geq 2$, and fix a smooth, strictly plurisubharmonic exhaustion function $\varphi$ for $M$.  We endow $M$ with the K\"ahler metric given by the K\"ahler form $\omega=i\ddbar\varphi$.  In local coordinates $z_1,\ldots,z_n$, we will write
\[
  \omega=i \sum_{j,k=1}^n g_{j\bar k}dz_j\wedge d\bar{z}_k=i \sum_{j,k=1}^n \frac{\partial^2\varphi}{\partial z_j\partial\bar z_k}dz_j\wedge d\bar{z}_k.
\]
As usual, $g^{\bar k j}$ will denote the inverse matrix to $g_{j\bar k}$.  By the usual convention, we will use the metric to raise and lower indices, so that, for example,
\[
  \sum_{\ell=1}^n g_{j\bar\ell}b^{\bar\ell k}=b_j^{\cdot k}\text{ and }
  \sum_{\ell=1}^n c_{j\bar\ell}g^{\bar\ell k}=c_j^{\cdot k}.
\]
Additionally, we use the bracket $\langle\cdot,\cdot\rangle$ notation for the metric pairing, i.e., if $L,L'\in T^{1,0}(M)$, then
$\langle L, L' \rangle = \omega(i\bar L'\wedge L)$ whereas if $\alpha = \sum_{j=1}^n a_j\, dz_j$ and $\alpha' = \sum_{j=1}^n a_j' \, dz_j$ in local coordinates, then
$\langle \alpha,\alpha'\rangle = \sum_{j,k=1}^n\bar{a}_k' g^{\bar k j} a_j $. By multilinearity, $\langle \cdot,\cdot\rangle$ now extends to $(p,q)$-forms.

Let $\Omega\subset M$ be a bounded domain with $C^m$ boundary.  By definition, this means that there exists a $C^m$ function $\rho$ on $M$ such that $\Omega=\set{z\in M:\rho(z)<0}$ and $d\rho\neq 0$ on $\partial\Omega$.  Such a $\rho$ is called a $C^m$ defining function for $\Omega$.  For $z\in\partial\Omega$, we define the induced CR-structure on $\partial\Omega$ at $z$ by
\[
  T^{1,0}_z(\partial\Omega)=\set{L\in T^{1,0}_z(M):\partial\rho(L)=0}.
\]
Let $T^{1,0}(\partial\Omega)$ denote the space of $C^{m-1}$ sections of $T^{1,0}_z(\partial\Omega)$.  We will also need $T^{0,1}(\partial\Omega)=\overline{T^{1,0}(\partial\Omega)}$ and the exterior algebra generated by these: $T^{p,q}(\partial\Omega)$.  Let $\Lambda^{p,q}(\partial\Omega)$ denote the bundle of $C^{m-1}$ $(p,q)$-forms on $T^{p,q}(\partial\Omega)$.  We use $\tau$ to denote the orthogonal projection and restriction:
\begin{equation}\label{eqn:tau def'n}
  \tau:\Lambda^{p,q}(M)\rightarrow\Lambda^{p,q}(\partial\Omega).
\end{equation}
For each element $X$ of $T^{p,q}$ (resp. $\Lambda^{p,q}$), we denote the metric dual element of $\Lambda^{p,q}$ (resp. $T^{p,q}$) by $X^\sharp$.  This is defined to satisfy the relationships $X^\sharp(Y)=\left<Y,X\right>$ for all $Y\in T^{p,q}$ (resp. $Y(X^\sharp)=\left<Y,X\right>$ for all $Y\in \Lambda^{p,q}$).  For example, the dual of the K\"ahler form is given in local coordinates by
\[
  \omega^\sharp=i\sum_{j,k=1}^n g^{\bar k j}\frac{\partial}{\partial\bar z_k}\wedge\frac{\partial}{\partial z_j}.
\]

For any $C^2$ defining function $\rho$, the \emph{Levi form $\mathcal{L}_\rho$} is the real element of $\Lambda^{1,1}(\partial\Omega)$ defined by
\[
  \mathcal{L}_\rho(i \bar L\wedge L')=i\ddbar\rho(i \bar L\wedge L')
\]
for any $L, L'\in T^{1,0}(\partial\Omega)$.  As usual, if $\tilde\rho$ is another $C^2$ defining function for $\Omega$, then $\tilde\rho=\rho h$ for some nonvanishing $C^1$ function $h$, and $\mathcal{L}_{\tilde\rho}=h\mathcal{L}_\rho$.  We will typically suppress the subscript $\rho$ when the choice of defining function is not relevant.
\begin{defn}
\label{defn:weak_Zq}
  For $1\leq q\leq n-1$, we say $\partial\Omega$ satisfies \emph{$Z(q)$ weakly} if there exists a real $\Upsilon\in T^{1,1}(\partial\Omega)$ satisfying
  \begin{enumerate}
    \item \label{item:upper_and_lower_bounds} $\abs{\theta}^2\geq (i\theta\wedge\bar\theta)(\Upsilon)\geq 0$ for all $\theta\in \Lambda^{1,0}(\partial\Omega)$.

    \item \label{item:Levi form_property} $\mu_1+\cdots+\mu_q-\mathcal{L}(\Upsilon)\geq 0$ where $\mu_1,\ldots,\mu_{n-1}$ are the eigenvalues of $\mathcal{L}$ in increasing order.

    \item \label{item:trace} $\omega(\Upsilon)\neq q$.
  \end{enumerate}
\end{defn}
\begin{rem}
  Note that this is an intrinsic definition, so it can also be applied to abstract CR-manifolds of hypersurface type.
  This replaces the definition given in \cite{HaRa11}, and the main results of that paper still follow with this more general definition.
\end{rem}

The fact that $\partial\Omega$ is the boundary of a domain induces a natural orientation on $\partial\Omega$.  It is sometimes useful to reverse the orientation and think of $\partial\Omega$ as the boundary of the complement instead.  The following observation is trivial for $Z(q)$, and motivates the definition of $Y(q)$, so it is of interest to confirm the corresponding fact for weak $Z(q)$.
\begin{prop}
\label{prop:Yq_motivation}
  For $1\leq q\leq n-2$, let $\Omega\subset M$ be a bounded domain and let $B\subset M$ be a sufficiently large bounded pseudoconvex domain so that $\overline\Omega\subset B$.  Then $\partial\Omega$ satisfies $Z(q)$ weakly if and only if $\partial(B/\overline\Omega)$ satisfies $Z(n-q-1)$ weakly.
\end{prop}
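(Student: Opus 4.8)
The plan is to produce, from a witness $\Upsilon$ for the weak $Z(q)$ condition on $\partial\Omega$, a witness $\Upsilon'$ for the weak $Z(n-q-1)$ condition on $\partial(B\setminus\overline\Omega)$, and to exploit the symmetry of the construction to get the converse for free. The key point is that $\partial(B\setminus\overline\Omega)$ decomposes into $\partial B$ and $\partial\Omega$ with the orientation of $\partial\Omega$ reversed; since $B$ is pseudoconvex, on $\partial B$ the Levi form is positive semidefinite and one can take $\Upsilon'=0$ there (or, near $\partial B$, any small multiple of a projection), which trivially satisfies all three conditions of Definition~\ref{defn:weak_Zq} for any index. So the real content is entirely local near $\partial\Omega$, where reversing orientation replaces the defining function $\rho$ by $-\rho$ (up to a positive factor), hence replaces $\mathcal L$ by $-\mathcal L$ and reverses the order of its eigenvalues: the eigenvalues of $-\mathcal L$ in increasing order are $-\mu_{n-1},\dots,-\mu_1$.

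The natural guess is $\Upsilon' = \omega^\sharp\!\restriction_{T^{1,1}(\partial\Omega)} - \Upsilon$, i.e. the ``complementary'' form obtained by subtracting $\Upsilon$ from the identity endomorphism of $T^{1,0}(\partial\Omega)$ (which is what $\omega^\sharp$ restricts to). Condition~\eqref{item:upper_and_lower_bounds} for $\Upsilon'$, namely $|\theta|^2\ge (i\theta\wedge\bar\theta)(\Upsilon')\ge 0$, is then exactly condition~\eqref{item:upper_and_lower_bounds} for $\Upsilon$ with the roles of the two inequalities swapped, so it holds. For condition~\eqref{item:trace}: $\omega(\Upsilon') = (n-1) - \omega(\Upsilon)$ on the $(n-1)$-dimensional space $T^{1,0}(\partial\Omega)$, so $\omega(\Upsilon)\ne q$ is equivalent to $\omega(\Upsilon')\ne n-1-q$, which is the required condition~\eqref{item:trace} for index $n-q-1$. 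The crux is condition~\eqref{item:Levi form_property}. Writing $\nu_1\le\cdots\le\nu_{n-1}$ for the eigenvalues of $-\mathcal L$, we have $\nu_j=-\mu_{n-j}$, so what must be shown is
\[
  \nu_1+\cdots+\nu_{n-q-1} - (-\mathcal L)(\Upsilon') \ge 0,
  \quad\text{i.e.}\quad
  -(\mu_{q+1}+\cdots+\mu_{n-1}) + \mathcal L(\Upsilon') \ge 0.
\]
Since $\mathcal L(\Upsilon')=\mathcal L(\omega^\sharp) - \mathcal L(\Upsilon) = (\mu_1+\cdots+\mu_{n-1}) - \mathcal L(\Upsilon)$, the left side equals $(\mu_1+\cdots+\mu_q) - \mathcal L(\Upsilon)$, which is $\ge 0$ precisely by condition~\eqref{item:Levi form_property} for $\Upsilon$. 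Thus all three conditions transfer, and the construction $\Upsilon\mapsto\omega^\sharp-\Upsilon$ is an involution, giving both implications at once.

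The main obstacle I anticipate is bookkeeping around the two boundary components and the orientation reversal: one must be careful that $\mathcal L(\omega^\sharp)$ is genuinely the trace $\sum\mu_j$ (this uses that $\omega^\sharp$ restricted to $T^{1,1}(\partial\Omega)$ represents the identity endomorphism, which follows from the metric conventions set up in Section 2), that ``eigenvalues of $\mathcal L$'' are computed with respect to the induced metric on $T^{1,0}(\partial\Omega)$ consistently on both sides, and that gluing the local data near $\partial\Omega$ to the trivial choice near $\partial B$ produces a global section of $T^{1,1}$ — here a partition of unity argument works because conditions~\eqref{item:upper_and_lower_bounds} and \eqref{item:Levi form_property} are convex (preserved under convex combinations) while condition~\eqref{item:trace}, being an open condition $\omega(\Upsilon')\ne n-q-1$, can be arranged by noting $\omega(\Upsilon')=0\ne n-q-1$ on all of $\partial B$ and $\omega(\Upsilon')=n-1-q\ne n-q-1$... wait, one must instead keep the partition of unity supported so that near each boundary component the ``defective'' value is attained and interpolation stays on the correct side — a small perturbation of $\Upsilon$ by a multiple of $\omega^\sharp$ (allowed by strict inequality in \eqref{item:trace}) handles any borderline case. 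The hypothesis $q\le n-2$ is exactly what is needed so that the complementary index $n-q-1$ lies in the admissible range $1\le n-q-1\le n-1$.
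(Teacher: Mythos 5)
Your construction and computation are correct and essentially identical to the paper's proof: you take $\Upsilon' = (\tau\omega)^\sharp - \Upsilon$ on $\partial\Omega$ and $\Upsilon' = 0$ on $\partial B$, verify the complementary bound in condition~\eqref{item:upper_and_lower_bounds}, use the eigenvalue reversal under orientation change together with $\mathcal{L}((\tau\omega)^\sharp) = \mu_1 + \cdots + \mu_{n-1}$ to telescope condition~\eqref{item:Levi form_property} into the original one, and observe $\omega(\Upsilon') = n-1-\omega(\Upsilon)$ for condition~\eqref{item:trace}, with the involutive nature of the map giving the converse. One remark: your closing worry about partitions of unity is moot, because $\overline\Omega \subset B$ forces $\partial B$ and $\partial\Omega$ to be disjoint connected pieces of $\partial(B\setminus\overline\Omega)$, so $\Upsilon'$ is genuinely defined piecewise with no interpolation region where condition~\eqref{item:trace} could be violated (and indeed $\omega(\Upsilon') = 0 \ne n-q-1$ on $\partial B$ since $q \le n-2$).
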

\begin{proof}
  Suppose $\partial\Omega$ satisfies $Z(q)$ weakly, and let $\tilde{\Upsilon}$ be the element of $T^{1,1}(\partial\Omega)$ given by Definition \ref{defn:weak_Zq}.  On $\partial B$, we can define $\Upsilon=0$, and on $\partial\Omega$ we define $\Upsilon=(\tau\omega)^\sharp-\tilde\Upsilon$.  If $\tilde\mu_1,\ldots,\tilde\mu_{n-1}$ are the eigenvalues of the Levi form of $\partial\Omega$ in increasing order, then $\mu_1=-\tilde\mu_{n-1},\ldots,\mu_{n-1}=-\tilde\mu_1$ are the eigenvalues of the Levi form of $\partial(B/\overline\Omega)$ (on $\partial\Omega$) in increasing order, so since $\mathcal{L}((\tau\omega)^\sharp)=\mu_1+\cdots+\mu_{n-1}$, we have
  \[
    \mu_1+\cdots+\mu_{n-q-1}-\mathcal{L}(\Upsilon)=\tilde\mu_1+\cdots+\tilde\mu_q-\tilde{\mathcal{L}}(\tilde\Upsilon).
  \]
  Furthermore, $\omega(\Upsilon)=n-1-\omega(\tilde\Upsilon)\neq n-q-1$.

  Similar computations prove the converse.
\end{proof}
\begin{rem}
\label{rem:Yq_motivation}
  We can replace $B$ with any bounded domain such that $\partial B$ satisfies $Z(n-q-1)$ weakly.
\end{rem}
Motivated by this, we define
\begin{defn}
  For $1\leq q\leq n-2$, we say $\partial\Omega$ satisfies \emph{$Y(q)$ weakly} if $\partial\Omega$ satisfies $Z(q)$ weakly and $Z(n-q-1)$ weakly.
\end{defn}

We note that Definition \ref{defn:weak_Zq} is essentially a local property (modulo connected boundary components).
\begin{lem}
\label{lem:local}
  For $1\leq q\leq n-1$, let $\sigma:\partial\Omega\rightarrow\set{-1,1}$ be continuous and suppose that for every $p\in\partial\Omega$ there exists an open neighborhood $U_p$ of $p$ such that $U_p\cap\partial\Omega$ is connected and a real $\Upsilon_p\in T^{1,1}(U_p)$ satisfying
  \begin{enumerate}
    \item $\abs{\theta}^2\geq (i\theta\wedge\bar\theta)(\Upsilon_p)\geq 0$ for all $\theta\in \Lambda^{1,0}(U_p)$.

    \item $\mu_1+\cdots+\mu_q-\mathcal{L}(\Upsilon_p)\geq 0$ on $U_p$ where $\mu_1,\ldots,\mu_{n-1}$ are the eigenvalues of the Levi form in increasing order.

    \item $\sigma(p)\left(\omega(\Upsilon_p)-q\right)>0$ on $U_p$.
  \end{enumerate}
  Then $\partial\Omega$ satisfies $Z(q)$ weakly.
\end{lem}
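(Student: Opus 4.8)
The plan is to glue the local sections $\Upsilon_p$ into one global section by means of a partition of unity, exploiting the elementary fact that the first two conditions of Definition~\ref{defn:weak_Zq} are preserved under convex combinations while the third is kept under control by $\sigma$. Since $\partial\Omega$ is compact, I would first pass to a finite subcover $U_{p_1},\dots,U_{p_N}$ of $\partial\Omega$ and choose a $C^\infty$ partition of unity $\set{\chi_i}_{i=1}^N$ on $\partial\Omega$ with $\operatorname{supp}\chi_i\subset U_{p_i}\cap\partial\Omega$ and $\sum_i\chi_i\equiv 1$. Setting $\Upsilon=\sum_{i=1}^N\chi_i\Upsilon_{p_i}$, where each $\chi_i\Upsilon_{p_i}$ is extended by zero outside $\operatorname{supp}\chi_i$, yields a real element of $T^{1,1}(\partial\Omega)$ (with the same regularity as the $\Upsilon_{p_i}$, since the $\chi_i$ are smooth), and it remains only to verify the three conditions of Definition~\ref{defn:weak_Zq} for this $\Upsilon$.

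Conditions \eqref{item:upper_and_lower_bounds} and \eqref{item:Levi form_property} I would dispatch by linearity. For a fixed $\theta\in\Lambda^{1,0}(\partial\Omega)$ the map $\Upsilon'\mapsto(i\theta\wedge\bar\theta)(\Upsilon')$ is linear, so $(i\theta\wedge\bar\theta)(\Upsilon)=\sum_i\chi_i\,(i\theta\wedge\bar\theta)(\Upsilon_{p_i})$; on $\operatorname{supp}\chi_i$ the $i$-th term lies between $0$ and $\chi_i\abs{\theta}^2$ by condition (1) of the lemma, and it vanishes elsewhere, so summing (with $\sum_i\chi_i\equiv 1$) gives $0\le(i\theta\wedge\bar\theta)(\Upsilon)\le\abs{\theta}^2$. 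Likewise $\mathcal{L}(\cdot)$ is linear and the eigenvalues $\mu_1,\dots,\mu_{n-1}$ of $\mathcal{L}$ depend only on the base point, so $\mathcal{L}(\Upsilon)=\sum_i\chi_i\mathcal{L}(\Upsilon_{p_i})\le\sum_i\chi_i(\mu_1+\cdots+\mu_q)=\mu_1+\cdots+\mu_q$ by condition (2) of the lemma.

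The substantive point is condition \eqref{item:trace}, and this is where the hypotheses on $\sigma$ and on the connectedness of the sets $U_p\cap\partial\Omega$ enter. Since $\sigma$ is continuous into the discrete set $\set{-1,1}$ it is locally constant, hence constant on the connected set $U_{p_i}\cap\partial\Omega$ with value $\sigma(p_i)$; in particular $\sigma\equiv\sigma(p_i)$ on $\operatorname{supp}\chi_i$, and combining this with condition (3) of the lemma gives $\sigma\,(\omega(\Upsilon_{p_i})-q)>0$ on $\operatorname{supp}\chi_i$. Then for any $z\in\partial\Omega$, using $\sum_i\chi_i(z)=1$ and linearity of $\omega(\cdot)$,
\[
  \omega(\Upsilon)(z)-q=\sum_{i}\chi_i(z)\bigl(\omega(\Upsilon_{p_i})(z)-q\bigr),
\]
where every summand with $\chi_i(z)>0$ has sign $\sigma(z)$ and at least one such summand is present, so $\sigma(z)\bigl(\omega(\Upsilon)(z)-q\bigr)>0$; in particular $\omega(\Upsilon)\neq q$ at every point. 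Thus $\Upsilon$ is a section of the kind required by Definition~\ref{defn:weak_Zq}, and $\partial\Omega$ satisfies $Z(q)$ weakly.

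I expect the only real obstacle to be the bookkeeping in the last step: one must make sure that along $\operatorname{supp}\chi_i$ the sign of $\omega(\Upsilon_{p_i})-q$ agrees with the value recorded by $\sigma$ — which is exactly the role of the assumption that $U_p\cap\partial\Omega$ is connected — and that the convex combination defining $\omega(\Upsilon)$ admits no cancellation. Everything else reduces to the observation that the constraint set cut out by conditions \eqref{item:upper_and_lower_bounds} and \eqref{item:Levi form_property} is convex, together with the standard existence of a partition of unity on the compact set $\partial\Omega$.
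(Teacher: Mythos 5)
Your proof is correct and takes essentially the same approach as the paper: pass to a finite subcover, glue the local $\Upsilon_{p_i}$ with a subordinate partition of unity, verify conditions (1) and (2) by convexity/linearity, and use the connectedness of each $U_p\cap\partial\Omega$ together with the local constancy of $\sigma$ to rule out cancellation in $\omega(\Upsilon)-q$. You simply spell out the steps that the paper dispatches with the phrase ``the necessary properties are satisfied by linearity'' and the no-cancellation remark.
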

\begin{rem}
  The function $\sigma$ represents a choice of orientation for each connected boundary component of $\Omega$.
\end{rem}
\begin{proof}
  Choose a finite cover $\set{U_p}_{p\in\mathcal{P}}$ of $\partial\Omega$ and let $\chi_p$ be a subordinate partition of unity.  If we let $\Upsilon=\sum_{p\in\mathcal{P}}\chi_p\Upsilon_p$, then the necessary properties are satisfied by linearity.  Since $\sigma$ is constant on each connected component of $\partial\Omega$, $\omega(\Upsilon_p)-q$ will have a constant sign on each connected component of $\partial\Omega$, so there is no possibility of cancellation in the corresponding sum.
\end{proof}

Fix $p\in\partial\Omega$, and choose local coordinates that are orthonormal at $p$ and satisfy $\frac{\partial\rho}{\partial z_j}(p)=0$ for all $1\leq j\leq n-1$.  At $p$ we can write
\[
  \Upsilon=i\sum_{j,k=1}^{n-1} b^{\bar k j}\frac{\partial}{\partial\bar z_k}\wedge\frac{\partial}{\partial z_j}\quad\text{ and }\quad
  \mathcal{L}=i\sum_{j,k=1}^{n-1} c_{j\bar k}dz_j\wedge d\bar z_k
\]
where $b^{\bar k j}$ and $c_{j\bar k}$ are hermitian $(n-1)\times(n-1)$ matrices.  Suppose that the local coordinates are chosen to diagonalize $b^{\bar k j}$ at $p$, and write $b^{\bar k j}=\delta_{jk}\lambda_j$.  When restricted to $p$, the defining characteristics of weak $Z(q)$ will take the form
\begin{enumerate}
  \item $0\leq\lambda_j\leq 1$ for all $1\leq j\leq n-1$.

  \item $\mu_1+\cdots+\mu_q-(\lambda_1 c_{1\bar 1}+\cdots+\lambda_{n-1}c_{n-1\overline{n-1}})\geq 0$.

  \item $\lambda_1+\cdots+\lambda_{n-1}\neq q$.
\end{enumerate}
If there is an orthonormal local coordinate frame that diagonalizes $b^{\bar k j}$ such that $\lambda_1=\cdots=\lambda_m=1$ and $\lambda_{m+1}=\cdots=\lambda_{n-1}=0$ for some $m\neq q$, then this is the condition studied in \cite{HaRa11} which was shown to generalize $Z(q)$ and $(q-1)$-pseudoconvexity \cite{Zam08} (with the weight $\varphi(z)=\abs{z}^2$ in \cite{HaRa11}).

Alternatively, we can choose orthonormal coordinates that diagonalize the Levi form at a point, so that $c_{j\bar k}=\delta_{jk}\mu_j$.  The second condition then translates into
\[
  \mu_1(1-b^{\bar 1 1})+\cdots+\mu_q(1-b^{\bar q q})-(\mu_{q+1} b^{\overline{q+1}q+1}+\cdots+\mu_{n-1} b^{\overline{n-1}n-1})\geq 0.
\]
Since $(1-b^{\bar j j})\geq 0$, $b^{\bar j j}\geq 0$, and $\mu_j\leq \mu_q\leq \mu_{q+1}\leq \mu_k$ for all $j\leq q< q+1\leq k$, it follows that
$\mu_q(q-\omega(\Upsilon))\geq 0$ and $\mu_{q+1}(q-\omega(\Upsilon))\geq 0$.  Hence, if $\mu_q<0$,
then $\omega(\Upsilon)>q$, and if $\mu_{q+1}>0$, then $\omega(\Upsilon)<q$.  Equivalently, we have
\begin{lem}
  \label{lem:count_eigenvalues}
  For $1\leq q\leq n-1$ let $\Omega\subset M$ be a bounded domain and suppose that $\partial\Omega$ satisfies $Z(q)$ weakly.  Let $\Upsilon$ be as in Definition \ref{defn:weak_Zq}.  For any fixed boundary point, if $\omega(\Upsilon)<q$ then the Levi form has at least $n-q$ nonnegative eigenvalues, and if $\omega(\Upsilon)>q$, then the Levi form has at least $q+1$ nonpositive eigenvalues.
\end{lem}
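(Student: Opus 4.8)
The statement is a direct reformulation of the pointwise computation sketched immediately before it, so the plan is simply to record that computation in a self-contained way. First I would fix a boundary point $p$ and choose local coordinates that are orthonormal at $p$ with $\frac{\partial\rho}{\partial z_j}(p)=0$ for $1\le j\le n-1$, and then rotate $z_1,\dots,z_{n-1}$ so that the Levi form is diagonal at $p$, say $c_{j\bar k}=\delta_{jk}\mu_j$ with $\mu_1\le\cdots\le\mu_{n-1}$. In such a frame, testing condition (1) of Definition~\ref{defn:weak_Zq} on $\theta=dz_j$ gives $0\le b^{\bar j j}\le 1$ for each $j$, while pairing $\Upsilon$ with $\omega$ (which equals $i\sum_{j=1}^{n}dz_j\wedge d\bar z_j$ at $p$) gives $\omega(\Upsilon)=\sum_{j=1}^{n-1}b^{\bar j j}$; likewise $\mathcal{L}(\Upsilon)=\sum_{j=1}^{n-1}\mu_j b^{\bar j j}$, so condition (2) of Definition~\ref{defn:weak_Zq} reads
\[
  \sum_{j=1}^{q}\mu_j\bigl(1-b^{\bar j j}\bigr)-\sum_{j=q+1}^{n-1}\mu_j b^{\bar j j}\ge 0 .
\]

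Next I would extract two scalar inequalities. Because $1-b^{\bar j j}\ge 0$ and $\mu_j\le\mu_q$ when $j\le q$, and $b^{\bar j j}\ge 0$ and $\mu_j\ge\mu_q$ when $j\ge q+1$, the left-hand side above is bounded above by
\[
  \mu_q\sum_{j=1}^{q}\bigl(1-b^{\bar j j}\bigr)-\mu_q\sum_{j=q+1}^{n-1}b^{\bar j j}=\mu_q\bigl(q-\omega(\Upsilon)\bigr),
\]
and the same argument with $\mu_{q+1}$ replacing $\mu_q$ (using $\mu_j\le\mu_q\le\mu_{q+1}$ for $j\le q$ and $\mu_j\ge\mu_{q+1}$ for $j\ge q+1$) bounds it above by $\mu_{q+1}\bigl(q-\omega(\Upsilon)\bigr)$. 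Hence $\mu_q\bigl(q-\omega(\Upsilon)\bigr)\ge 0$ and $\mu_{q+1}\bigl(q-\omega(\Upsilon)\bigr)\ge 0$.

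Finally, condition (3) of Definition~\ref{defn:weak_Zq} excludes $\omega(\Upsilon)=q$, so exactly one of the two cases holds. If $\omega(\Upsilon)<q$, the first inequality forces $\mu_q\ge 0$, hence $0\le\mu_q\le\mu_{q+1}\le\cdots\le\mu_{n-1}$ are $n-q$ nonnegative eigenvalues; if $\omega(\Upsilon)>q$, the second forces $\mu_{q+1}\le 0$ (the hypothesis being vacuous when $q=n-1$, since then $\omega(\Upsilon)\le n-1$), hence $\mu_1\le\cdots\le\mu_{q+1}\le 0$ are $q+1$ nonpositive eigenvalues. I expect no genuine obstacle: the only points needing care are the bookkeeping of the defining-function and orientation conventions and the observation that, since $\mathcal{L}$ is diagonal in the chosen frame, the pairing $\mathcal{L}(\Upsilon)$ sees only the diagonal entries of $b$, so the off-diagonal part of $\Upsilon$ is irrelevant.
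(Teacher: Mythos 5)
Your proposal is correct and follows essentially the same argument the paper gives in the paragraph immediately preceding the lemma: diagonalize the Levi form, observe that condition \eqref{item:Levi form_property} of Definition \ref{defn:weak_Zq} becomes $\sum_{j\le q}\mu_j(1-b^{\bar jj})-\sum_{j>q}\mu_j b^{\bar jj}\ge 0$, and bound this above by $\mu_q(q-\omega(\Upsilon))$ and by $\mu_{q+1}(q-\omega(\Upsilon))$ using the monotone ordering of the $\mu_j$ together with $0\le b^{\bar jj}\le 1$. Your remark that the $\omega(\Upsilon)>q$ case is vacuous when $q=n-1$ (so that $\mu_{q+1}$ never needs to be invoked there) is a small but welcome clarification that the paper leaves implicit.
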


Many results will be easier to work with when the boundary is connected.  The fact that we are only working with bounded domains induces a natural decomposition into domains with connected boundaries.
\begin{lem}
\label{lem:connected_decomposition}
  For any Stein manifold $M$, supposed that $\Omega\subset M$ is a connected bounded domain with $C^3$ boundary satisfying $Z(q)$ weakly for some $1\leq q\leq n-1$.  Then $\Omega=\Omega_1/\bigcup_{j=2}^m\overline\Omega_j$ where $\Omega_j$ has connected boundary for each $1\leq j\leq m$, $\Omega_1$ satisfies $Z(q)$ weakly, and $\Omega_j$ satisfies $Z(n-q-1)$ weakly for each $2\leq j\leq m$.  
The $(1,1)$-vector $\Upsilon$ in definition \ref{defn:weak_Zq} 
will satisfy $\omega(\Upsilon)<q$ on $\partial\Omega_1$ and $\omega(\Upsilon)>q$ on $\partial\Omega_j$ for $2\leq j\leq m$.
\end{lem}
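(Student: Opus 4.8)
The plan is to reduce to the connected-boundary case using the Stein structure, then pin down the sign of $\omega(\Upsilon)-q$ on each boundary component by a maximum-of-exhaustion argument, and finally check the two weak $Z$ conditions for the pieces by reusing the computation in the proof of Proposition~\ref{prop:Yq_motivation}.

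First I would record the topological picture. Since $\partial\Omega$ is a compact $C^3$ hypersurface it has finitely many connected components $\Gamma_1,\dots,\Gamma_m$. Because $M$ is Stein of complex dimension $n\geq 2$, $H^k(M)=0$ for $k>n$, from which it follows that $M$ has a single end and that every compact connected hypersurface in $M$ separates $M$ into two connected open pieces; consequently $M\setminus\overline\Omega$ has exactly one non-relatively-compact component $U_1$ and finitely many relatively compact components, each with connected boundary, which after relabelling we may take to be $\Gamma_2,\dots,\Gamma_m$, with $\partial U_1=\Gamma_1$ (this is standard; cf.\ the annulus set-up in \cite{Shaw85a}). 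Setting $\Omega_j:=U_j$ for $2\leq j\leq m$ and $\Omega_1:=M\setminus\overline{U_1}$, each $\Omega_j$ is a bounded domain with $\partial\Omega_j=\Gamma_j$, and $\Omega=\Omega_1\setminus\bigcup_{j=2}^m\overline{\Omega_j}$. I expect this step to be the main obstacle: obtaining, with only $C^3$ regularity, both that each component of $M\setminus\overline\Omega$ has connected boundary and that exactly one of them is unbounded is precisely where the hypotheses ``$M$ Stein'' and ``$n\geq 2$'' enter essentially, and I would either cite this or supply the homological argument indicated above.

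Next I would fix a $\Upsilon\in T^{1,1}(\partial\Omega)$ witnessing weak $Z(q)$ for $\Omega$ and determine the sign of $\omega(\Upsilon)-q$ on each $\Gamma_j$, which is constant there because $\omega(\Upsilon)$ is continuous and, by condition~\eqref{item:trace} of Definition~\ref{defn:weak_Zq}, never equals $q$. Let $p_1\in\overline{\Omega_1}$ maximize the strictly plurisubharmonic exhaustion $\varphi$; such a function has no interior maximum, so $p_1\in\partial\Omega_1=\Gamma_1$, and $d\varphi(p_1)\neq 0$ (automatic here since $n\geq 2$, else the real Hessian of $\varphi$ at $p_1$ would be negative semidefinite along the $J$-invariant space $T^{1,0}_{p_1}\partial\Omega_1\neq 0$). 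Then near $p_1$ the domains $\Omega_1$ and $\{\varphi<\varphi(p_1)\}$ share the boundary point $p_1$ with $\Omega_1\subseteq\{\varphi<\varphi(p_1)\}$, so the standard comparison of Levi forms for nested domains gives $\mathcal{L}_{\partial\Omega_1}(p_1)\geq\mathcal{L}_{\{\varphi=\varphi(p_1)\}}(p_1)>0$ on $T^{1,0}_{p_1}(\partial\Omega_1)$; hence the Levi form of $\partial\Omega$ at $p_1$ has all $n-1$ eigenvalues positive, and Lemma~\ref{lem:count_eigenvalues} rules out $\omega(\Upsilon)(p_1)>q$, forcing $\omega(\Upsilon)<q$ on all of $\Gamma_1$. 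Running the same argument at a maximizer $p_j\in\partial\Omega_j$ of $\varphi$ for $2\leq j\leq m$ makes the Levi form of $\partial\Omega_j$ positive definite at $p_j$; but the orientation of $\Gamma_j$ as a component of $\partial\Omega$ is the reverse of its orientation as $\partial\Omega_j$ (since $\Omega$ lies outside $\Omega_j$ near $\Gamma_j$), so the Levi form of $\partial\Omega$ is negative definite at $p_j$, and Lemma~\ref{lem:count_eigenvalues} rules out $\omega(\Upsilon)(p_j)<q$, forcing $\omega(\Upsilon)>q$ on all of $\Gamma_j$. This is the final assertion of the lemma.

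Finally I would verify the weak $Z$ conditions for the pieces. For $\Omega_1$ the restriction $\Upsilon|_{\Gamma_1}\in T^{1,1}(\partial\Omega_1)$ satisfies conditions~\eqref{item:upper_and_lower_bounds} and~\eqref{item:Levi form_property} pointwise and satisfies~\eqref{item:trace} because $\omega(\Upsilon|_{\Gamma_1})<q$, so $\Omega_1$ satisfies weak $Z(q)$. For $2\leq j\leq m$ the orientation of $\Gamma_j=\partial\Omega_j$ is reversed, turning the Levi eigenvalues $\mu_1\leq\cdots\leq\mu_{n-1}$ of $\partial\Omega$ into $-\mu_{n-1}\leq\cdots\leq-\mu_1$, and I would take $\Upsilon_{(j)}:=\bigl((\tau\omega)^\sharp-\Upsilon\bigr)|_{\Gamma_j}$. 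Exactly as in the proof of Proposition~\ref{prop:Yq_motivation}, $\Upsilon_{(j)}$ has pointwise eigenvalues $1-\lambda_k\in[0,1]$, which is~\eqref{item:upper_and_lower_bounds}; the identity displayed there gives $(-\mu_{n-1})+\cdots+(-\mu_{q+1})-\mathcal{L}_{\partial\Omega_j}(\Upsilon_{(j)})=\mu_1+\cdots+\mu_q-\mathcal{L}(\Upsilon|_{\Gamma_j})\geq 0$, which is~\eqref{item:Levi form_property} at level $n-q-1$; and $\omega(\Upsilon_{(j)})=(n-1)-\omega(\Upsilon|_{\Gamma_j})<n-1-q$, which is~\eqref{item:trace}. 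Hence $\Omega_j$ satisfies weak $Z(n-q-1)$, completing the proof; in the last two steps the only delicate points are the bookkeeping of orientations and the Levi-form comparison.
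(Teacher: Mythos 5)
Your proof is correct and reaches the same conclusion as the paper's, but by a genuinely different mechanism. For the topological decomposition $\Omega=\Omega_1\setminus\bigcup_{j\geq 2}\overline\Omega_j$, both arguments ultimately rely on the same topological input (that every compact connected hypersurface in a Stein manifold of dimension $n\geq 2$ bounds a relatively compact domain), but you make this explicit via $H_{2n-1}(M)=0$, whereas the paper treats it as implicit. The real divergence is in producing a strictly pseudoconvex point on each boundary component: the paper embeds $M$ into $\mathbb{C}^{2n+1}$ via Theorem~5.3.9 of H\"ormander, pulls back the smallest Euclidean ball containing $\psi[\Omega]$ to a strictly pseudoconvex $B\supset\Omega$, and finds $p\in\partial\Omega\cap\partial B$ where the Levi form is positive definite; it then repeats this ball argument for each inner component. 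Your version is intrinsic: you maximize the given exhaustion $\varphi$ over $\overline{\Omega_1}$ (resp.\ $\overline{\Omega_j}$), use the strict maximum principle to place the maximizer on the boundary with $\Omega_1\subset\{\varphi<\varphi(p_1)\}$, rule out $d\varphi(p_1)=0$ by the $J$-invariance/positivity argument, and invoke the standard Levi-form comparison for tangent nested domains. Both give positive-definiteness and then apply Lemma~\ref{lem:count_eigenvalues} identically (with the orientation reversal for $j\geq 2$, which you handle correctly). Your approach avoids the auxiliary embedding entirely and only uses the exhaustion already fixed in the setup, at the cost of needing the maximum-principle and Hessian-positivity lemmas; the paper's approach offloads those to the explicit strict convexity of the Euclidean ball. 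Your verification of the weak $Z(q)$ and weak $Z(n-q-1)$ conditions on the pieces via $\Upsilon|_{\Gamma_1}$ and $(\tau\omega)^\sharp-\Upsilon|_{\Gamma_j}$ matches the computation in the paper's Proposition~\ref{prop:Yq_motivation}, as you intend.
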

\begin{proof}
Let $\psi:M\rightarrow\mathbb{C}^{2n+1}$ be an embedding (see Theorem 5.3.9 in \cite{Hor90}).
Since $\Omega$ is bounded, there exists a minimal radius $R>0$ such that $\psi[\Omega]$ is contained in a ball centered at zero with radius $R$.  Denote the pullback of this ball under $\psi$ by $B$.
Then $B$ is a strictly pseudoconvex domain in $M$ containing $\Omega$ and since $R$ is minimal there exists at least one point $p\in\partial\Omega\cap\partial B$.  At $p$, $\partial\Omega$ must also be
strictly pseudoconvex, so $\omega(\Upsilon)(p)<q$ by
the contrapositive of Lemma \ref{lem:count_eigenvalues}.  By continuity, $\omega(\Upsilon)<q$ on the connected boundary component containing $p$, so we define this to be $\partial\Omega_1$.
Since $\Omega$ is connected, the remaining boundary components (finitely many, since $\Omega$ is relatively compact with $C^3$ boundary) can be thought of as boundaries of $Z(n-q-1)$ domains by
Proposition \ref{prop:Yq_motivation} and Remark \ref{rem:Yq_motivation}.  Using the argument with a ball in $\mathbb{C}^{2n+1}$, we again see that each of these subdomains has a strictly pseudoconvex
point, and hence the Levi form is negative definite (when viewed as part of $\partial\Omega$).  When the Levi form is negative-definite, we must have $\omega(\Upsilon)>q$ by Lemma
\ref{lem:count_eigenvalues}.
\end{proof}
\begin{rem}
  \label{rem:Z(n-1)} One consequence of this proof is that there are no bounded weak $Z(0)$ domains, since $\omega(\Upsilon)<0$ is impossible.  On the other hand, bounded weak $Z(n-1)$ domains can exist (e.g., pseudoconvex domains), but they must have connected boundaries (otherwise some boundary components would bound weak $Z(0)$ domains).  For analysis of the $q=n-1$ case on domains with disconnected boundaries, see \cite{Hor04} and \cite{Shaw10}.
\end{rem}

To prove our basic estimates, we will need extensions of $\Upsilon$ to $M$.
\begin{lem}
\label{lem:upsilon_extension}
  Suppose that $\partial\Omega$ satisfies $Z(q)$ weakly, and let $\Upsilon$ be as in Definition \ref{defn:weak_Zq}.  Let $\rho$ be any $C^m$ defining function for $\Omega$.  There exist relatively compact open sets $U^+$, $U^-$, and $U^0$ covering $\overline\Omega$ such that $\partial\Omega\cap\overline U^0=\emptyset$ and $\overline U^+\cap\overline U^-=\emptyset$, along with real $\Upsilon^+,\Upsilon^-\in T^{1,1}(M)$ satisfying
  \begin{enumerate}
    \item \label{item:positivity} $\abs{\theta}^2\geq (i\theta\wedge\bar\theta)(\Upsilon^\pm)\geq 0$ for all $\theta\in \Lambda^{1,0}(M)$.

    \item \label{item:nondegeneracy} $\omega(\Upsilon^+)<q$ and $\omega(\Upsilon^-)>q$ on $M$.

    \item \label{item:projection} For any $\theta\in\Lambda^{1,0}(M)$ we have
    \[
      (i\theta\wedge\bar\theta)(\Upsilon^\pm)=(i\tau\theta\wedge\tau\bar\theta)(\Upsilon)
    \]
    on $\partial\Omega\cap U^\pm$.

    \item On a neighborhood of $\partial\Omega\cap U^\pm$, we have $(\theta\wedge\dbar\rho)(\Upsilon^\pm)=0$ for all $\theta\in \Lambda^{1,0}(M)$.
  \end{enumerate}
\end{lem}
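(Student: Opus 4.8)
The plan is to split $\partial\Omega$ according to the sign of $\omega(\Upsilon)-q$, to build a single ``intrinsic'' extension of $\Upsilon$ that is supported in $\ker\partial\rho\wedge\overline{\ker\partial\rho}$ on a neighborhood of $\partial\Omega$ (so that the third and fourth conditions become automatic there), and then to cut this extension off, or to interpolate it against the dual K\"ahler form, so as to obtain $\Upsilon^+$ with $\omega(\Upsilon^+)<q$ and $\Upsilon^-$ with $\omega(\Upsilon^-)>q$ on all of $M$. Throughout, I will keep every constructed object inside the convex set of real $(1,1)$-vectors whose eigenvalues lie in $[0,1]$, so that the first (positivity) condition is preserved automatically by each averaging and cutoff step.

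\textbf{Covering and intrinsic extension.} Since $\Upsilon$ is continuous and $\omega(\Upsilon)\neq q$ everywhere on $\partial\Omega$ (Definition \ref{defn:weak_Zq}), the function $\omega(\Upsilon)-q$ has constant sign on each connected component of $\partial\Omega$; let $\Gamma^+$ and $\Gamma^-$ be the disjoint compact unions of components on which $\omega(\Upsilon)<q$, respectively $\omega(\Upsilon)>q$, noting that $\Gamma^-=\emptyset$ when $q=n-1$ by Remark \ref{rem:Z(n-1)}. Pick relatively compact open $U^+\supset\Gamma^+$ and $U^-\supset\Gamma^-$ with $\overline{U^+}\cap\overline{U^-}=\emptyset$ (possible since $\Gamma^\pm$ are disjoint compacta; if $\Gamma^-=\emptyset$ take $U^-$ to be any small relatively compact set with $\overline{U^-}\cap(\overline{U^+}\cup\partial\Omega)=\emptyset$), and cover the remaining compact set $\overline\Omega\setminus(U^+\cup U^-)\subset\Omega$ by $U^0=\set{\rho<-\eps}$ for a small regular value $\eps>0$; shrinking $\eps$ and the $U^\pm$, this yields the required covering with $\partial\Omega\cap\overline{U^0}=\emptyset$. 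Now let $\mathcal D=\ker\partial\rho\subset T^{1,0}(M)$, a $C^{m-1}$ rank-$(n-1)$ subbundle on a neighborhood of $\partial\Omega$ with $\mathcal D|_{\partial\Omega}=T^{1,0}(\partial\Omega)$, and regard $\Upsilon$ as a $C^{m-1}$ hermitian endomorphism of $\mathcal D|_{\partial\Omega}$ with spectrum in $[0,1]$. Using trivializations of $\mathcal D$ and a partition of unity, extend it to a $C^{m-1}$ hermitian endomorphism $\tilde\Upsilon$ of $\mathcal D$ near $\partial\Omega$, the averaging preserving the spectral bound by convexity; viewed as a real $(1,1)$-vector on $M$ with both legs in $\mathcal D$, it satisfies $(\theta\wedge\dbar\rho)(\tilde\Upsilon)=0$ near $\partial\Omega$ (because $\rho$ real forces $\bar L\in\ker\dbar\rho$ whenever $L\in\ker\partial\rho$), and a one-line computation using that $\theta$ and $\tau\theta$ agree on $T^{1,0}(\partial\Omega)$ gives $(i\theta\wedge\bar\theta)(\tilde\Upsilon)=(i\tau\theta\wedge\overline{\tau\theta})(\Upsilon)$ on $\partial\Omega$. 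Since $\omega(\tilde\Upsilon)=\Tr\tilde\Upsilon$ is continuous and agrees with $\omega(\Upsilon)$ on $\partial\Omega$, after shrinking we may assume $\omega(\tilde\Upsilon)<q$ on an open $V^+\supset\Gamma^+$ and $\omega(\tilde\Upsilon)>q$ on an open $V^-\supset\Gamma^-$ with $\overline{U^\pm}\subset V^\pm$.

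\textbf{Globalization.} Choose $\chi^\pm\in C^\infty_c(U^\pm)$ with $0\le\chi^\pm\le1$ and $\chi^\pm\equiv1$ near $\Gamma^\pm$, and set
\[
  \Upsilon^+=\chi^+\,\tilde\Upsilon,\qquad\Upsilon^-=(1-\chi^-)\,\omega^\sharp+\chi^-\,\tilde\Upsilon,
\]
each extended by the evident formula to a global real $(1,1)$-vector on $M$; recall $\omega^\sharp$ satisfies $(i\theta\wedge\bar\theta)(\omega^\sharp)=\abs{\theta}^2$, so it is the ``identity'' and has $\omega(\omega^\sharp)=n$. Pointwise $\Upsilon^\pm$ are convex-type combinations of real $(1,1)$-vectors with eigenvalues in $[0,1]$, hence satisfy the positivity condition. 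For the trace condition, $\omega(\Upsilon^+)=\chi^+\,\omega(\tilde\Upsilon)\in[0,q)$ since $0\le\omega(\tilde\Upsilon)<q$ on $\operatorname{supp}\chi^+\subset V^+$ and is $0<q$ elsewhere, while $\omega(\Upsilon^-)=(1-\chi^-)n+\chi^-\,\omega(\tilde\Upsilon)>(1-\chi^-)q+\chi^- q=q$ on $\operatorname{supp}\chi^-\subset V^-$ (using $n>q$) and equals $n>q$ off $\operatorname{supp}\chi^-$. Finally $\partial\Omega\cap U^\pm=\Gamma^\pm$ (as $\overline{U^+}\cap\overline{U^-}=\emptyset$) and $\chi^\pm\equiv1$ there, so $\Upsilon^\pm=\tilde\Upsilon$ on a neighborhood of $\partial\Omega\cap U^\pm$, which therefore inherits the third and fourth conditions from the previous step.

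\textbf{The main obstacle.} The one step that is not routine bookkeeping is arranging $\omega(\Upsilon^-)>q$ on \emph{all} of $M$: one cannot extend $\tilde\Upsilon$ by zero away from $\Gamma^-$, since that would force the trace down to $0<q$, so instead one interpolates against $\omega^\sharp$, whose trace is $n$. This works exactly because $q<n$ in all cases, and because $\Gamma^-=\emptyset$ when $q=n-1$ (Remark \ref{rem:Z(n-1)}), so whenever $\Gamma^-\neq\emptyset$ there is genuine room $q<n-1<n$ for the interpolation to keep the eigenvalues in $[0,1]$ while pushing the trace strictly above $q$; preservation of the eigenvalue bounds through all the other extensions and cutoffs is handled uniformly by the same convexity principle.
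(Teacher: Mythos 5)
Your proof is correct, and it reaches the same conclusion as the paper's via a genuinely different extension mechanism. The paper's proof defines a flow $\psi(w,t)$ toward $\partial\Omega$ along the normalized gradient $-(\mathrm{sgn}\,\rho)\abs{d\rho}^{-2}(d\rho)^\sharp$, then parallel translates the boundary value of $\Upsilon^\pm$ along this flow into the interior, blending with $\omega^\sharp$ (for $\Upsilon^-$) or with $0$ (for $\Upsilon^+$) via a cutoff $\chi$. Your construction instead regards $\Upsilon$ as a hermitian endomorphism of the $C^{m-1}$ bundle $\mathcal D=\ker\partial\rho$ with spectrum in $[0,1]$, extends it inward using local trivializations and a partition of unity, and only afterward interpolates globally against $0$ or $\omega^\sharp$. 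Because your $\tilde\Upsilon$ has both legs in $\mathcal D$ by construction on a full neighborhood of $\partial\Omega$, property (4) — the vanishing of $(\theta\wedge\dbar\rho)(\Upsilon^\pm)$ near $\partial\Omega\cap U^\pm$ — is immediate from $\bar L\in\ker\dbar\rho$ whenever $L\in\ker\partial\rho$; with parallel transport one must still check that the translated legs stay in $\ker\partial\rho$ at interior points, which is not automatic since parallel translation along the gradient flow does not preserve the level sets of $\partial\rho$ in general. The convexity argument you use to keep eigenvalues in $[0,1]$ through each averaging and cutoff step, and the trace bookkeeping using $\omega(\omega^\sharp)=n>q$ together with the sign of $\omega(\Upsilon)-q$ on each boundary component, match the spirit of the paper's proof; both constructions give $C^{m-1}$ regularity for $\Upsilon^\pm$, which is what the later integration by parts needs.
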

\begin{rem}
\label{rem:connected_boundary}
  Note that $U^+\neq\emptyset$ by Lemma \ref{lem:connected_decomposition}.  On the other hand, if $\partial\Omega$ is connected, we can set $U^-=\emptyset$ and $U^0=\emptyset$.
\end{rem}
\begin{proof}
Let $K^+=\set{z\in\partial\Omega:\omega(\Upsilon)<q}$ and $K^-=\set{z\in\partial\Omega:\omega(\Upsilon)>q}$.  By the continuity of $\omega(\Upsilon)$ these are disconnected from each other, so there
exist open neighborhoods $U^+$ and $U^-$ such that $K^\pm\subset U^\pm$ and $K^\pm\cap\overline U^\mp=\emptyset$.  Choose $U^0$ such that
$\partial\Omega\cap\overline U^0=\emptyset$ and $\overline\Omega\subset U^0\cup U^+\cup U^-$.

Let $U$ be a neighborhood of $\partial\Omega$ on which $d\rho\neq 0$ for some $C^m$ defining function $\rho$.  Let $\psi(w,t):U\times[0,\abs{\rho(w)}]\rightarrow U$ solve the initial value problem
$\psi(w,0)=w$ and $\frac{\partial}{\partial t}\psi(w,t)=-\left((\mathrm{sgn}\rho)\abs{d\rho}^{-2}(d\rho)^\sharp\right)(\psi(w,t))$.  By construction, this will satisfy
\[
\frac{\p}{\p t}\rho(\psi(w,t)) = -(\mathrm{sgn}\rho)|d\rho|^{-2}\langle d\rho,d\rho\rangle(\psi(w,t)) = -(\mathrm{sgn}\rho)(\psi(w,t)),
\]
so $\psi(w,\abs{\rho(w)})\in\partial\Omega$.

We denote parallel translation along $\psi(w,t)$ by
  \[
    P_{a,w}^b:T^{p,q}_{\psi(w,a)}(M)\rightarrow T^{p,q}_{\psi(w,b)}(M).
  \]

  Choose $\chi\in C^\infty_0(U)$ such that $\chi\equiv 1$ on a neighborhood of $\partial\Omega$.  We define $\Upsilon^\pm$ on $\partial\Omega\cap U^\pm$ by
  \[
    (i\theta\wedge\bar\theta)(\Upsilon^\pm)=(i\tau\theta\wedge\tau\bar\theta)(\Upsilon)
  \]
  for any $\theta\in\Lambda^{1,0}(M)$.  On $U\cap U^\pm$, we parallel translate $\Upsilon^\pm$ along $\psi$, as follows.  We define
  \[
    \Upsilon^+(w)=\chi(w)P_{\abs{\rho(w)},w}^{0}\Upsilon^+(\psi(w,\abs{\rho(w)}))
  \]
  and
  \[
    \Upsilon^-(w)=\chi(w)P_{\abs{\rho(w)},w}^{0}\Upsilon^-(\psi(w,\abs{\rho(w)}))+(1-\chi(w))\omega^\sharp.
  \]
  We can now define $\Upsilon^+=0$ on $M/(U\cap U^+)$ and $\Upsilon^-=\omega^\sharp$ on $M/(U\cap U^-)$.
\end{proof}

We are now ready to define our weight function.  Let $U^\pm$, $U^0$, and $\Upsilon^\pm$ be as in Lemma \ref{lem:upsilon_extension}.  Fix $\chi\in C^\infty_0(M/\overline U^-)$ such that $\chi\equiv 1$ on $\overline U^+$.  Set
\[
  \phi=\chi\varphi-(1-\chi)\varphi.
\]
While the complex Hessian of $\phi$ on $U^0$ will involve derivatives of $\chi$, we still have
\begin{equation}
\label{eq:complex_hessian}
  i\ddbar\phi=\begin{cases}\omega &\text{on }U^+\\-\omega &\text{on }U^-\end{cases}.
\end{equation}
We next define the usual weighted $L^2$-inner products.  For $f, h\in L^2_{(0,q)}(\Omega)$, define
\[
  (f,h)_t=\int_\Omega e^{-t\phi}\left<f,h\right>dV.
\]
and $\norm{f}_t=\sqrt{(f,f)_t}$.  Since $e^{-t\phi}$ is uniformly bounded on $\Omega$, the space of $(0,q)$-forms bounded in $\norm{\cdot}_t$ is equal to $L^2_{(0,q)}(\Omega)$.  The operator
\[
  \dbar:L^2_{(0,q)}(\Omega,e^{-t\phi})\rightarrow L^2_{(0,q+1)}(\Omega,e^{-t\phi})
\]
is given its $L^2$-maximal definition, and the adjoint
\[
  \dbar^*_t:L^2_{(0,q+1)}(\Omega,e^{-t\phi})\rightarrow L^2_{(0,q)}(\Omega,e^{-t\phi})
\]
is defined with respect to the weighted inner product $(\cdot,\cdot)_t$.  We also have $\Box_t^q=\dbar\dbar^*_t+\dbar^*_t\dbar$ with the induced domain.  The space of harmonic forms is given by
\[
  \mathcal{H}^q_t(\Omega)=L^2_{(0,q)}(\Omega,e^{-t\phi})\cap\ker\dbar\cap\ker\dbar^*_t,
\]
with the projection onto these denoted $H_t^q:L^2_{(0,q)}(\Omega,e^{-t\phi})\rightarrow\mathcal{H}_t^q(\Omega,e^{-t\phi})$.  When it exists, the weighted $\dbar$-Neumann operator
\[
  N^q_t:L^2_{(0,q)}(\Omega,e^{-t\phi})\rightarrow\dom{\Box_t^q}
\]
satisfies $\Box_t^q N_t^q=I-H_t^q$.

Let $\mathcal{I}_q$ denote the set of increasing multi-indices over $\set{1,\cdots,n}$ of length $q$.  For an open set $U\subset M$ with local coordinates $\set{z_1^U,\ldots, z_n^U}$, we let $\nabla_j^U$ denote the covariant derivative with respect to $\frac{\partial}{\partial z_j^U}$.  We also use $\nabla_{j,t}^{U,*}=-\overline\nabla_j^U+t\frac{\partial\phi}{\partial\bar z_j^U}$.  This satisfies the adjoint relationship
\[
  \sum_{j,k=1}^n(g^{\bar k j}_U\nabla_j^U f,h_k)_t=\sum_{j,k=1}^n(g^{\bar k j}_U f,\nabla_{j,t}^{U,*}h_k)_t
\]
assuming $f$ and $h_k$ are compactly supported.  If $\mathcal{U}$ is a finite open cover of $\Omega$ by such sets, we let $\set{\chi^U}_{U\in\mathcal{U}}$ denote a partition of unity subordinate to this cover and define the following gradient terms on $(0,q)$-forms $f$ for any $\Upsilon\in T^{1,1}(M)$:
\begin{align}
\label{eq:bar_gradient}
  \norm{\overline\nabla f}^2_t&=\sum_{U\in\mathcal{U}}\sum_{j,k=1}^n(\chi^U g^{\bar k j}_U\overline\nabla^U_k f,\overline\nabla^U_j f)_t\\
\label{eq:bar_gradient_upsilon}
  \norm{\overline\nabla_\Upsilon f}^2_t&=\sum_{U\in\mathcal{U}}\sum_{j,k=1}^n(\chi^U b^{\bar k j}_U\overline\nabla^U_k f,\overline\nabla^U_j f)_t\\
\label{eq:gradient_upsilon}
  \norm{\nabla_\Upsilon f}^2_t&=\sum_{U\in\mathcal{U}}\sum_{j,k=1}^n(\chi^U b^{\bar k j}_U\overline\nabla^{U,*}_{j,t} f,\overline\nabla^{U,*}_{k,t} f)_t,
\end{align}
where
\[
  \Upsilon=i\sum_{j,k=1}^n b^{\bar k j}_U \frac{\partial}{\partial\bar z_k^U}\wedge\frac{\partial}{\partial z_j^U}
\]
on $U$.  We also introduce vector fields which will figure prominently in our error terms:
\begin{align*}
  E&=\sum_{U\in\mathcal{U}}\sum_{j,k,\ell}\chi^U g^{\bar k\ell}_U\left(\frac{\partial}{\partial\bar z_k^U}b_{\ell, U}^{\cdot j}\right)\frac{\partial}{\partial z_j^U}\\
  E_\Upsilon&=\sum_{U\in\mathcal{U}}\sum_{j,k,\ell,r}\chi^U g^{\bar k\ell}_U\left(\frac{\partial}{\partial\bar z_k^U}b_{\ell, U}^{\cdot r}\right)b_{r, U}^{\cdot j}\frac{\partial}{\partial z_j^U}.
\end{align*}
Note that if we change coordinates, $b_{\ell, U}^{\cdot r}$ will be multiplied by matrices of holomorphic functions, which will be annihilated by $\frac{\partial}{\partial\bar z_k^U}$,
so the vector fields remain invariant under changes of coordinates.
At any point $p\in\Omega$, choose orthonormal coordinates at $p$ that diagonalize $b^{\bar k j}$, with eigenvalues $\lambda_j$ corresponding to the eigenvector $\frac{\partial}{\partial z_j}$ at $p$.  If $\Upsilon$ satisfies property \eqref{item:positivity} in Lemma \ref{lem:upsilon_extension}, then $0\leq\lambda_j\leq 1$.  If $\Upsilon$ is $C^1$, then at $p$ we can write
\[
  E=\sum_{j=1}^n A^j\frac{\partial}{\partial z_j}\text{ and }E_\Upsilon=\sum_{j=1}^n A^j\lambda_j\frac{\partial}{\partial z_j}
\]
where $A^j$ are continuous functions on our local coordinate patch.  Hence, at $p$, since $\lambda_j^2\leq\lambda_j$ we have
\[
  \abs{\overline\nabla^*_{E_\Upsilon,t} f}^2=\abs{\sum_{j=1}^n A^j\lambda_j\overline\nabla^*_{j,t}f}^2\leq C\sum_{j=1}^n\lambda_j\abs{\overline\nabla^*_{j,t}f}^2
\]
for some constant $C>0$.  Integrating, this gives us
\begin{equation}
\label{eq:gradient_error_estimate}
  \norm{\overline\nabla^*_{E_\Upsilon,t} f}^2_t\leq C\norm{\nabla_\Upsilon f}^2_t.
\end{equation}
On the other hand, since $(1-\lambda_j)^2\leq(1-\lambda_j)$, we also have at $p$
\[
  \abs{\overline\nabla_E f-\overline\nabla_{E_\Upsilon} f}^2=\abs{\sum_{j=1}^n A_j(1-\lambda_j)\overline\nabla_j f}^2\leq C\sum_{j=1}^n(1-\lambda_j)\abs{\overline\nabla_j f}^2
\]
for some constant $C>0$.  Integration gives us
\begin{equation}
\label{eq:bar_gradient_error_estimate}
  \norm{\overline\nabla_E f-\overline\nabla_{E_\Upsilon} f}^2_t\leq C\left(\norm{\overline\nabla f}^2_t-\norm{\overline\nabla_\Upsilon f}^2_t\right).
\end{equation}

We also abuse notation and define the action of $(1,1)$-forms on $(0,q)$-forms.  Let $f\in C^1_{(0,q)}(\overline\Omega)\cap\dom{\dbar^*_t}$.  For any point $p\in\Omega$, choose local coordinates that are orthonormal at $p$ and define
\[
  i\ddbar\phi(f,f)(p)=\sum_{J\in\mathcal{I}_{q-1}}\sum_{j,k=1}^n\frac{\partial^2\phi}{\partial z_j\partial\bar z_k}f_{kJ}\bar f_{jJ}
\]
where $f_{kJ} = (-1)^\sigma f_K$ for $K\in\mathcal I_q$ if $\{k\}\cup J = K$ as sets and $\sigma$ is the length of the permutation that changes $kJ$ into $K$.  Due to \eqref{eq:complex_hessian}, we have
\begin{equation}
\label{eq:hessian_bound}
  i\ddbar\phi(f,f)=\begin{cases}q\abs{f}^2 &\text{on }U^+\\-q\abs{f}^2 & \text{on }U^-\end{cases}.
\end{equation}
For any point $p\in\partial\Omega$, choose local coordinates that are orthonormal at $p$ such that $\frac{\partial\rho}{\partial z_j}(p)=0$ for all $1\leq j\leq n-1$.  We define
\[
  \mathcal{L}(f,f)(p)=\sum_{J\in\mathcal{I}_{q-1}}\sum_{j,k=1}^{n-1}\frac{\partial^2\rho}{\partial z_j\partial\bar z_k}f_{kJ}\bar f_{jJ}.
\]
We note for future reference that if $\mu_1,\ldots,\mu_{n-1}$ are the eigenvalues of $\mathcal{L}$ arranged in increasing order, then (adapting the proof of Lemma 4.7 in \cite{Str10}), we have
\begin{equation}
\label{eq:levi_bound}
  \mathcal{L}(f,f)\geq (\mu_1+\cdots+\mu_q)\abs{f}^2.
\end{equation}

\section{The Basic Estimate}
\label{sec:basic_estimate}

Let $\rho$ be a $C^m$ defining function for $\Omega$ with $\abs{d\rho}=1$ on $\Omega$.  For $f\in C^1_{(0,q)}(\overline{\Omega})\cap\dom{\dbar^*_t}$ with $1\leq q\leq n$, we have the Morrey-Kohn-H\"ormander equality (see for example \cite{ChSh01}, \cite{FoKo72}, \cite{Hor90}, or \cite{Str10}):
\begin{equation}
\label{eq:MKH}
  \norm{\dbar f}^2_t+\norm{\dbar^*_t f}^2_t=\norm{\overline\nabla f}^2_t+t \int_\Omega i\ddbar\phi(f,f)e^{-t\phi}dV+\int_{\partial\Omega}\mathcal{L}(f,f)e^{-t\phi}dS+O(\norm{f}^2_t).
\end{equation}
The error term involves the curvature of the K\"ahler metric, and can be computed explicitly using the Bochner-Kodaira technique \cite{Siu82}.  Since this term can be controlled by choosing $t$ large enough, we won't need the precise value.

We wish to understand integration by parts in the gradient term.  Note that in \eqref{eq:bar_gradient}, \eqref{eq:bar_gradient_upsilon}, and \eqref{eq:gradient_upsilon}, the integrated terms are invariant under changes of coordinate, so derivatives of the partition of unity $\chi^U$ that arise from integration by parts will cancel. Hence, for clarity of notation, we can suppress the partition of unity without losing information.

Let $U^\pm$, $U^0$, and $\Upsilon^\pm$ be as in Lemma \ref{lem:upsilon_extension}.  Note that property \eqref{item:projection} in this lemma guarantees that $\Upsilon^\pm$ has no normal component on $\partial\Omega\cap U^\pm$, so $\Upsilon^\pm|_{\partial\Omega\cap U^\pm}$ is made up of tangential derivatives which can be integrated by parts without introducing a boundary term.  Hence, working in local coordinates with $f\in C^2_{(0,q)}(\overline\Omega)\cap C^2_0(U^\pm)$ we have
\begin{equation}
\label{eq:first_int_by_parts}
\begin{split}
  \norm{\overline\nabla_{\Upsilon^\pm} f}^2_t&=\sum_{j,k=1}^n(b^{\bar k j}\overline\nabla_k f,\overline\nabla_j f)_t\\
  &=\sum_{j,k,\ell=1}^n(g^{\bar k\ell}f,\overline\nabla_{k,t}^* (b_{\bar\ell}^{\cdot\bar j}\overline\nabla_j f))_t\\
  &=\sum_{j,k=1}^n(b^{\bar k j}f,\overline\nabla_{k,t}^*\overline\nabla_j f)_t-(f,\overline\nabla_E f)_t.
\end{split}
\end{equation}
To continue, we will need the commutator
\[
  \sum_{j,k=1}^n b^{k\bar j}[\overline\nabla_{k,t}^*,\overline\nabla_j] f=-t\sum_{j,k=1}^n b^{k\bar j}\frac{\partial^2\phi}{\partial z_k\partial\bar z_j}f+O(f)=\mp t \omega(\Upsilon^\pm)f+O(f)
\]
where the error terms are independent of $t$ and \eqref{eq:complex_hessian} has been used.  Substituting in \eqref{eq:first_int_by_parts} we have
\begin{equation}
\label{eq:second_int_by_parts}
\begin{split}
  \norm{\overline\nabla_{\Upsilon^\pm} f}^2_t&=\mp t(\omega(\Upsilon^\pm)f,f)_t+\sum_{j,k=1}^n(b^{\bar k j}f,\overline\nabla_j\overline\nabla_{k,t}^* f)_t-(f,\overline\nabla_E f)_t+O(\norm{f}^2_t)\\
  &=\mp t(\omega(\Upsilon^\pm)f,f)_t+\sum_{j,k,\ell=1}^n\big (g^{\bar\ell j}\overline\nabla_{j,t}^* \big(b^{\bar k}_{\cdot\bar\ell}f\big),\overline\nabla_{k,t}^* f\big)_t-(f,\overline\nabla_E f)_t\\
  &\qquad+O(\norm{f}^2_t)\\
  &=\mp t(\omega(\Upsilon^\pm)f,f)_t+\norm{\nabla_{\Upsilon^\pm} f}^2_t-(f,\overline\nabla_{E,t}^* f)_t-(f,\overline\nabla_E f)_t+O(\norm{f}^2_t).
\end{split}
\end{equation}
When we integrate the error terms by parts, it will be helpful to note that on $\partial\Omega$ we have $E_{\Upsilon^\pm}\rho=0$ but
\[
  E\rho=\sum_{j,k,\ell}g^{\bar k\ell}\left(\frac{\partial}{\partial\bar z_k}b_{\ell}^{\cdot j}\right)\frac{\partial\rho}{\partial z_j}=-\sum_{j,k}b^{\bar k j}\frac{\partial^2 \rho}{\partial z_j\partial\bar z_k}=-\mathcal{L}(\Upsilon).
\]
It is also important to note that $\Upsilon^\pm$ must be $C^2$ if integration by parts with respect to $\nabla_{E_{\Upsilon^\pm}}$ is going to be well-defined.  Considering the error terms in \eqref{eq:second_int_by_parts}, we have
\begin{equation}
\label{eq:first_error_int_by_parts}
  (f,\overline\nabla_{E,t}^* f)_t=(f,\overline\nabla_{E_{\Upsilon^\pm},t}^* f)_t+((\overline\nabla_E-\overline\nabla_{E_{\Upsilon^\pm}})f,f)_t+\int_{\partial\Omega}\mathcal{L}(\Upsilon)\abs{f}^2 e^{-t\phi} dS+O(\norm{f}^2_t)
\end{equation}
and
\begin{equation}
\label{eq:second_error_int_by_parts}
  (f,\overline\nabla_E f)_t=(\overline\nabla_{E_{\Upsilon^\pm},t}^* f,f)_t+(f,(\overline\nabla_E-\overline\nabla_{E_{\Upsilon^\pm}})f)_t+O(\norm{f}^2_t).
\end{equation}
Substituting \eqref{eq:first_error_int_by_parts} and \eqref{eq:second_error_int_by_parts} into \eqref{eq:second_int_by_parts}, we have
\begin{equation}
\label{eq:int_by_part_identity}
\begin{split}
  \norm{\overline\nabla_{\Upsilon^\pm} f}^2_t&=\mp t(\omega(\Upsilon^\pm)f,f)_t-2\re\left((\overline\nabla_{E_{\Upsilon^\pm},t}^* f,f)_t+(f,(\overline\nabla_E-\overline\nabla_{E_{\Upsilon^\pm}})f)_t\right)\\
  &\qquad+\norm{\nabla_{\Upsilon^\pm} f}^2_t-\int_{\partial\Omega}\mathcal{L}(\Upsilon)\abs{f}^2 e^{-t\phi} dS+O(\norm{f}^2_t).
\end{split}
\end{equation}
Since we have property \eqref{item:positivity} in Lemma \ref{lem:upsilon_extension}, we can now write
\[
  \norm{\overline\nabla f}^2_t=\left(\norm{\overline\nabla f}^2_t-\norm{\overline\nabla_{\Upsilon^\pm} f}^2_t\right)+\norm{\overline\nabla_{\Upsilon^\pm} f}^2_t
\]
and use the Schwarz inequality, the small constant/large constant inequality,  \eqref{eq:bar_gradient_error_estimate} and \eqref{eq:gradient_error_estimate}
to control the error terms in \eqref{eq:int_by_part_identity}.  We conclude
\begin{equation}
\label{eq:gradient_estimate}
  \norm{\overline\nabla f}^2_t\geq\mp t(\omega(\Upsilon^\pm)f,f)_t  -\int_{\partial\Omega}\mathcal{L}(\Upsilon)\abs{f}^2 dS+O(\norm{f}^2_t)
\end{equation}
Substituting \eqref{eq:gradient_estimate} and \eqref{eq:hessian_bound} into \eqref{eq:MKH}, we have
\begin{equation}
\label{eq:MKH_estimate}
  \norm{\dbar f}^2_t+\norm{\dbar^*_t f}^2_t\geq \pm t ((q-\omega(\Upsilon^\pm))f,f)_t+\int_{\partial\Omega}(\mathcal{L}(f,f)-\mathcal{L}(\Upsilon)\abs{f}^2)e^{-t\phi}dS+O(\norm{f}^2_t).
\end{equation}

We are now ready to prove the basic estimate (see Proposition 3.1 in \cite{Shaw85a} for the case where $\Omega$ is the annuli between two weakly-pseudoconvex domains).
\begin{prop}
\label{prop:basic_estimates}
  Let $M$ be an $n$-dimensional Stein manifold, $n\geq 2$, and let $\Omega$ be a bounded subset of $M$ with $C^3$ boundary satisfying weak $Z(q)$ for some $1\leq q\leq n-1$.
  \begin{enumerate}
    \item \label{item:closed_range_estimate} For any constant $\eps>0$ there exists  $t_\eps>0$ and a $C_\eps>0$
    such that for any $t\geq t_\eps$ and  $f\in L^2_{(0,q)}(\Omega,e^{-t\phi})\cap\dom{\dbar}\cap\dom{\dbar^*_t}$ we have
        \[
          \eps\left(\norm{\dbar f}^2_t+\norm{\dbar^*_t f}^2_t\right)+C_\eps\norm{f}^2_{t,W^{-1}}\geq \norm{f}^2_t
        \]
     where $\|\cdot\|_{t,W^{-1}}$ is the dual norm to $\|\cdot\|_{t,W^1}$.
    \item \label{item:solvability_estimate} There exist constants $C>0$ and $\tilde t>0$ such that for all $t\geq \tilde t$ and
    $f\in L^2_{(0,q)}(\Omega,e^{-t\phi})\cap\dom{\dbar}\cap\dom{\dbar^*_t}\cap(\mathcal{H}^q_t(\Omega))^\bot$  we have
        \[
          C\left(\norm{\dbar f}^2_t+\norm{\dbar^*_t f}^2_t\right)\geq\norm{f}^2_t.
        \]
    \item \label{item:connected_boundary_estimate} If $\partial\Omega$ is connected, then for any constant $\eps>0$ there exists  $t_\eps>0$ such that for all $t\geq t_\eps$ and
    $f\in L^2_{(0,q)}(\Omega,e^{-t\phi})\cap\dom{\dbar}\cap\dom{\dbar^*_t}$ we have
        \[
          \eps\left(\norm{\dbar f}^2_t+\norm{\dbar^*_t f}^2_t\right)\geq \norm{f}^2_t.
        \]
  \end{enumerate}
\end{prop}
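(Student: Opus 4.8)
The essential analytic content is already contained in \eqref{eq:MKH_estimate}; what remains is a localization together with a standard passage to closed range. I record two preliminaries. First, by \eqref{eq:levi_bound} and property \eqref{item:Levi form_property} of Definition \ref{defn:weak_Zq}, the boundary integrand in \eqref{eq:MKH_estimate} is pointwise nonnegative on $\partial\Omega$:
\[
  \mathcal{L}(f,f)-\mathcal{L}(\Upsilon)\abs{f}^2\geq\big(\mu_1+\cdots+\mu_q-\mathcal{L}(\Upsilon)\big)\abs{f}^2\geq 0.
\]
Second, since $\omega(\Upsilon^+)<q<\omega(\Upsilon^-)$ on $M$ by property \eqref{item:nondegeneracy} of Lemma \ref{lem:upsilon_extension}, compactness of $\overline{U^\pm}$ yields $\delta>0$ with $q-\omega(\Upsilon^+)\geq\delta$ on $\overline{U^+}$ and $\omega(\Upsilon^-)-q\geq\delta$ on $\overline{U^-}$. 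Because \eqref{eq:MKH} and \eqref{eq:MKH_estimate} hold a priori only for $f\in C^2_{(0,q)}(\overline\Omega)\cap\dom{\dbar^*_t}$, I would prove each estimate for such $f$ and then extend to all of $\dom\dbar\cap\dom{\dbar^*_t}$ by the usual graph-norm density of $C^\infty_{(0,q)}(\overline\Omega)\cap\dom{\dbar^*_t}$ in $\dom\dbar\cap\dom{\dbar^*_t}$ (valid for $C^2$, hence $C^3$, boundaries; see \cite{ChSh01}, \cite{FoKo72}). Throughout, $C_0$ absorbs the $t$-independent term $O(\norm{f}^2_t)$.

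Part \eqref{item:connected_boundary_estimate} is the easiest: when $\partial\Omega$ is connected, Lemma \ref{lem:connected_decomposition} and Remark \ref{rem:connected_boundary} permit $U^-=U^0=\emptyset$ and $\overline\Omega\subset U^+$, so \eqref{eq:MKH_estimate} with the upper sign applies to every admissible $f$; dropping the nonnegative boundary integral gives $\norm{\dbar f}^2_t+\norm{\dbar^*_t f}^2_t\geq(t\delta-C_0)\norm{f}^2_t$, and $t_\eps:=(C_0+\eps^{-1})/\delta$ finishes it. For part \eqref{item:closed_range_estimate} I would fix $\zeta_+,\zeta_-,\zeta_0\in C^\infty_0(M)$ with $\mathrm{supp}\,\zeta_\bullet\subset U^\bullet$ and $\zeta_+^2+\zeta_-^2+\zeta_0^2\equiv 1$ on $\overline\Omega$. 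Since $\sum_\bullet\zeta_\bullet^2\equiv 1$ pointwise, $\norm{\dbar f}^2_t+\norm{\dbar^*_t f}^2_t=\sum_\bullet(\norm{\zeta_\bullet\dbar f}^2_t+\norm{\zeta_\bullet\dbar^*_t f}^2_t)$, while the Leibniz rule gives $\sum_\bullet(\norm{\dbar(\zeta_\bullet f)}^2_t+\norm{\dbar^*_t(\zeta_\bullet f)}^2_t)\leq 2(\norm{\dbar f}^2_t+\norm{\dbar^*_t f}^2_t)+C\norm{f}^2_t$. For $\bullet=\pm$ the form $\zeta_\pm f$ is supported in $U^\pm$, so \eqref{eq:MKH_estimate} together with the nonnegativity of its boundary term yields $\norm{\dbar(\zeta_\pm f)}^2_t+\norm{\dbar^*_t(\zeta_\pm f)}^2_t\geq(t\delta-C_0)\norm{\zeta_\pm f}^2_t$. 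For $\bullet=0$ the form $\zeta_0 f$ has compact support in $\Omega$ (as $\overline{U^0}\cap\partial\Omega=\emptyset$), so the interior elliptic estimate for $\dbar\oplus\dbar^*_t$ on compactly supported $(0,q)$-forms, $\norm{\zeta_0 f}^2_{t,W^1}\leq C(\norm{\dbar(\zeta_0 f)}^2_t+\norm{\dbar^*_t(\zeta_0 f)}^2_t+\norm{\zeta_0 f}^2_t)$, combined with the interpolation $\norm{\zeta_0 f}^2_t\leq\eta\norm{\zeta_0 f}^2_{t,W^1}+C_\eta\norm{\zeta_0 f}^2_{t,W^{-1}}$ (absorbing the $\norm{\zeta_0 f}^2_t$ term and using $\norm{\zeta_0 f}_{t,W^{-1}}\leq C\norm{f}_{t,W^{-1}}$) gives $\norm{\zeta_0 f}^2_t\leq\eta'(\norm{\dbar(\zeta_0 f)}^2_t+\norm{\dbar^*_t(\zeta_0 f)}^2_t)+C_{\eta'}\norm{f}^2_{t,W^{-1}}$. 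Summing the three pieces and choosing first $\eta'$ small and then $t$ large, so that the coefficient of $\norm{\dbar f}^2_t+\norm{\dbar^*_t f}^2_t$ is at most $\eps$ and the coefficient of $\norm{f}^2_t$ is $<1$, I absorb the $\norm{f}^2_t$ terms to the left and obtain \eqref{item:closed_range_estimate}.

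For part \eqref{item:solvability_estimate} I would deduce it from \eqref{item:closed_range_estimate} by the classical argument converting an estimate with a Rellich-compact error into a solvability estimate: with $\eps$ fixed, \eqref{item:closed_range_estimate} forces $\Box^q_t$ to have closed range, $\mathcal{H}^q_t(\Omega)$ to be finite-dimensional, and $L^2_{(0,q)}(\Omega,e^{-t\phi})=\mathcal{H}^q_t(\Omega)\oplus\range{\Box^q_t}$; for $f\in\dom\dbar\cap\dom{\dbar^*_t}\cap(\mathcal{H}^q_t(\Omega))^\bot$ the term $C_\eps\norm{f}^2_{t,W^{-1}}$ is absorbed by a contradiction argument in which a normalized violating sequence converges weakly in $L^2$ and strongly in $W^{-1}$ (Rellich) to an element of $\mathcal{H}^q_t(\Omega)$ orthogonal to $\mathcal{H}^q_t(\Omega)$, hence zero, contradicting non-vanishing of its $W^{-1}$ norm. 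Uniformity of the resulting $C$ over $t\geq\tilde t:=t_\eps$ comes from the uniformity of $t_\eps,C_\eps$ in \eqref{item:closed_range_estimate} and from the fact that the weight gain on $U^\pm$ only strengthens as $t\to\infty$.

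I expect the main obstacle to be bookkeeping rather than new ideas: the hard estimate \eqref{eq:MKH_estimate}, namely the integration by parts against the extensions $\Upsilon^\pm$ that tames the nonpositive Levi eigenvalues (and with it the role of the $C^3$ hypothesis and of the $C^2$ regularity of $\Upsilon^\pm$), is already in hand. What needs care is tracking the $t$-dependence of the various error constants through the localization — in particular, verifying that the interior estimate on $U^0$ and the Rellich absorption in part \eqref{item:solvability_estimate} leave constants that do not spoil the large-$t$ behavior — and correctly organizing the disconnected case around the cover $\{U^+,U^-,U^0\}$ of Lemma \ref{lem:upsilon_extension} and the sign of $\omega(\Upsilon)$ on the boundary components supplied by Lemma \ref{lem:connected_decomposition}.
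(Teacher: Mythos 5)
Your proposal is correct and follows essentially the same route as the paper: \eqref{eq:MKH_estimate} combined with the boundary positivity from \eqref{eq:levi_bound} and Definition \ref{defn:weak_Zq}\eqref{item:Levi form_property}, a localization over $\{U^+,U^-,U^0\}$ from Lemma \ref{lem:upsilon_extension}, G{\aa}rding plus the $W^1$--$W^{-1}$ duality on the interior piece, absorption after choosing $t$ large, density to remove the a priori $C^2$ regularity, and Shaw's Rellich/contradiction argument for part \eqref{item:solvability_estimate}. The only cosmetic deviations (squared cutoffs with $\sum\zeta_\bullet^2\equiv 1$ in place of the paper's partition of unity $\chi^\pm,\chi^0$, and spelling out explicitly what the paper cites as Lemma 3.1 of \cite{Shaw85a}) are immaterial.
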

\begin{proof}
  Let $U^\pm$, $U^0$, and $\Upsilon^\pm$ be as in Lemma \eqref{lem:upsilon_extension}.  Let $\chi^\pm$ and $\chi^0$ form a partition of unity subordinate to $U^\pm$ and $U^0$.  Given $f\in C^2_{(0,q)}(\overline\Omega)\cap\dom{\dbar^*_t}$, we define $f^\pm=\chi^\pm f$ and $f^0=\chi^0 f$.  Since $\partial\Omega$ is $C^3$, $\Upsilon^\pm$ are $C^2$, and hence \eqref{eq:MKH_estimate} holds for $f^\pm$.  By \eqref{eq:levi_bound} and property \eqref{item:Levi form_property} of Definition \ref{defn:weak_Zq}, the boundary term in \eqref{eq:MKH_estimate} is positive, so we have
  \[
    \norm{\dbar f^\pm}^2_t+\norm{\dbar^*_t f^\pm}^2_t\geq \pm t((q-\omega(\Upsilon^\pm))f^\pm,f^\pm)_t+O(\norm{f^\pm}^2_t).
  \]
  By property \eqref{item:nondegeneracy} of Lemma \ref{lem:upsilon_extension}, we have $\pm(q-\omega(\Upsilon^\pm))>0$.  Since $\Omega$ is bounded we know that $\pm(q-\omega(\tilde\Upsilon))\geq C_0$ for some constant $C_0>0$.  Furthermore,
  \[
    \norm{\dbar f^\pm}^2_t+\norm{\dbar^*_t f^\pm}^2_t\leq 2\norm{\dbar f}^2_t+2\norm{\dbar^*_t f}^2_t+O(\norm{f}^2_t),
  \]
  so
  \[
    2\norm{\dbar f}^2_t+2\norm{\dbar^*_t f}^2_t\geq t C_0\norm{f^\pm}^2_t+O(\norm{f}^2_t).
  \]

  Since $f_0$ is compactly supported in $\Omega$, we have G{\aa}rding's inequality
  \[
    \norm{f^0}^2_{t,W^1}\leq C_t(\norm{\dbar f^0}^2_t+\norm{\dbar^*_t f^0}^2_t+\norm{f^0}^2_t)
  \]
  for some constant $C_t>0$.  Using the duality between $W^1_0$ and $W^{-1}$ we have
  \[
    \norm{f^0}^2_t\leq\norm{f^0}_{t,W^{-1}}\sqrt{C_t(\norm{\dbar f^0}^2_t+\norm{\dbar^*_t f^0}^2_t+\norm{f^0}^2_t)}.
  \]
  Associating $\sqrt{C_t}$ with the $\norm{f^0}_{t,W^{-1}}$ and applying the standard small constant/large constant inequality, we have for any $s>0$
  \[
    \norm{f^0}^2_t\leq \frac{s}{2}C_t\norm{f^0}_{t,W^{-1}}^2+\frac{1}{2s}(\norm{\dbar f^0}^2_t+\norm{\dbar^*_t f^0}^2_t+\norm{f^0}^2_t).
  \]
  Subtracting $\frac{1}{2s}\norm{f^0}^2_t$ from both sides and multiplying by $2s$ we have
  \[
    (2s-1)\norm{f^0}^2_t\leq s^2 C_t\norm{f^0}_{t,W^{-1}}^2+\norm{\dbar f^0}^2_t+\norm{\dbar^*_t f^0}^2_t.
  \]
  Letting $s=\frac{1}{2}(1+t C_0)$, we have
  \[
    t C_0\norm{f^0}^2_t\leq \frac{1}{4}(1+t C_0)^2 C_t\norm{f}_{t,W^{-1}}^2+2\norm{\dbar f}^2_t+2\norm{\dbar^*_t f}^2_t+O(\norm{f}^2_t).
  \]
  Combining the estimates for $f^0$ and $f^\pm$, we conclude
  \[
    \frac{t C_0}{3}\norm{f}^2_t+O(\norm{f}^2_t)\leq\frac{1}{4}(1+t C_0)^2 C_t\norm{f}_{t,W^{-1}}^2+2\norm{\dbar f}^2_t+2\norm{\dbar^*_t f}^2_t.
  \]
  We can now choose $t$ sufficiently large so that
  \[
    \frac{t C_0}{6}\norm{f}^2_t+O(\norm{f}^2_t)\geq\frac{1}{\eps}\norm{f}^2_t
  \]
  and the estimate is complete.  Standard density results (see for example Lemma 4.3.2 in \cite{ChSh01}) complete the proof of part \eqref{item:closed_range_estimate}.  The proof of part \eqref{item:solvability_estimate} is completed in the same manner as Lemma 3.1 in \cite{Shaw85a}, after setting $\tilde{t}=\inf_{\eps>0}t_\eps$.

  When the boundary is connected, we note that $U^0=U^-=\emptyset$ (see Remark \ref{rem:connected_boundary}), so there is no need to estimate $f_0$.  Hence the $W^{-1}$ terms are not necessary, and part \eqref{item:connected_boundary_estimate} follows.
\end{proof}

We immediately have the standard consequences of such $L^2$ estimates.
\begin{thm}
\label{thm:L2_theory}
  Let $M$ be an $n$-dimensional Stein manifold, $n\geq 2$, and let $\Omega$ be a bounded subset of $M$ with $C^3$ boundary satisfying weak $Z(q)$ for some $1\leq q\leq n-1$.  Then there exists a constant $\tilde t>0$ such that for all $t>\tilde t$ we have
  \begin{enumerate}
    \item $\mathcal{H}^q_t(\Omega)$ is finite dimensional.  If $\partial\Omega$ is connected, then $\mathcal{H}^q_t(\Omega)=\set{0}$.

    \item The weighted $\dbar$-Laplacian $\Box_t^q$ has closed range in $L^2_{(0,q)}(\Omega,e^{-t\phi})$.

    \item The weighted $\dbar$-Neumann operator $N_t^q$ exists and is continuous.

    \item The operator $\dbar$ has closed range in $L^2_{(0,q)}(\Omega,e^{-t\phi})$ and $L^2_{(0,q+1)}(\Omega,e^{-t\phi})$.

    \item The operator $\dbar^*_t$ has closed range in $L^2_{(0,q)}(\Omega,e^{-t\phi})$ and $L^2_{(0,q-1)}(\Omega,e^{-t\phi})$.

    \item The canonical solution operators to $\dbar$ given by $\dbar^*_t N^q_t:L^2_{(0,q)}(\Omega,e^{-t\phi})\rightarrow L^2_{(0,q-1)}(\Omega,e^{-t\phi})$ and $N^q_t\dbar^*_t:L^2_{(0,q+1)}(\Omega,e^{-t\phi})\rightarrow L^2_{(0,q)}(\Omega,e^{-t\phi})$ are continuous.

    \item The canonical solution operators to $\dbar^*_t$ given by $\dbar N^q_t:L^2_{(0,q)}(\Omega,e^{-t\phi})\rightarrow L^2_{(0,q+1)}(\Omega,e^{-t\phi})$ and $N^q_t\dbar:L^2_{(0,q-1)}(\Omega,e^{-t\phi})\rightarrow L^2_{(0,q)}(\Omega,e^{-t\phi})$ are continuous.

    \item For every $f\in L^2_{(0,q)}(\Omega)\cap\ker\dbar\cap(\mathcal{H}^q_t(\Omega))^\bot$ there exists a $u\in L^2_{(0,q-1)}(\Omega)$ such that $\dbar u=f$.
  \end{enumerate}
\end{thm}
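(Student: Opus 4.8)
The plan is to feed the estimates of Proposition~\ref{prop:basic_estimates} into the standard functional-analytic machinery for the $\dbar$-Neumann problem (see Chapter~4 of \cite{ChSh01}, or \cite{FoKo72}, \cite{Str10}); no new idea is required beyond Proposition~\ref{prop:basic_estimates} itself. I would first dispose of (1), since the existence of $N^q_t$ in (3) requires control of $\mathcal{H}^q_t(\Omega)$, then read off (2)--(7) from the solvability estimate, and finally obtain (8).

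For (1): on $\mathcal{H}^q_t(\Omega)$ every form satisfies $\dbar f=\dbar^*_t f=0$, so part~\eqref{item:closed_range_estimate} of Proposition~\ref{prop:basic_estimates} degenerates to $\norm{f}^2_t\leq C_\eps\norm{f}^2_{t,W^{-1}}$ for all $f\in\mathcal{H}^q_t(\Omega)$. Because $\Omega$ is bounded, the inclusion $L^2_{(0,q)}(\Omega)\hookrightarrow W^{-1}_{(0,q)}(\Omega)$ is compact (the Rellich lemma), so any bounded sequence in $\mathcal{H}^q_t(\Omega)$ has a subsequence that is Cauchy in $\norm{\cdot}_{t,W^{-1}}$, hence Cauchy in $\norm{\cdot}_t$; thus the unit ball of $\mathcal{H}^q_t(\Omega)$ is compact and $\mathcal{H}^q_t(\Omega)$ is finite dimensional. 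When $\partial\Omega$ is connected, part~\eqref{item:connected_boundary_estimate} applied to $f\in\mathcal{H}^q_t(\Omega)$ gives $\norm{f}^2_t\leq\eps\cdot 0=0$, so $\mathcal{H}^q_t(\Omega)=\set{0}$.

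For (2)--(7): part~\eqref{item:solvability_estimate} gives, for $f\in\dom{\Box^q_t}\cap(\mathcal{H}^q_t(\Omega))^\bot$, the estimate $\norm{f}^2_t\leq C(\norm{\dbar f}^2_t+\norm{\dbar^*_t f}^2_t)=C(\Box^q_t f,f)_t\leq C\norm{\Box^q_t f}_t\norm{f}_t$, hence $\norm{f}_t\leq C\norm{\Box^q_t f}_t$ on $(\mathcal{H}^q_t(\Omega))^\bot$. Combined with the finiteness of the kernel $\mathcal{H}^q_t(\Omega)$ from (1), this is exactly the hypothesis of the standard existence theorem for the Neumann operator: $\Box^q_t$ has closed range, $N^q_t:=(\Box^q_t)^{-1}(I-H^q_t)$ is a bounded operator on $L^2_{(0,q)}(\Omega,e^{-t\phi})$ with $\Box^q_t N^q_t=I-H^q_t$, and the weighted Hodge decomposition $L^2_{(0,q)}(\Omega,e^{-t\phi})=\mathcal{H}^q_t(\Omega)\oplus\range{\dbar}\oplus\range{\dbar^*_t}$ holds with both ranges closed; this yields (2)--(5). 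The boundedness of the canonical solution operators in (6)--(7) then follows from $\norm{\dbar N^q_t f}^2_t+\norm{\dbar^*_t N^q_t f}^2_t=((I-H^q_t)f,N^q_t f)_t\leq\norm{N^q_t}\,\norm{f}^2_t$ together with the intertwining relations $\dbar N^q_t=N^{q+1}_t\dbar$ and $\dbar^*_t N^q_t=N^{q-1}_t\dbar^*_t$ on the appropriate domains.

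Finally, for (8): since $0<c\leq e^{-t\phi}\leq C$ on $\overline\Omega$, the spaces $L^2_{(0,q)}(\Omega,e^{-t\phi})$ and $L^2_{(0,q)}(\Omega)$ coincide as sets with equivalent norms, so it suffices to solve in the weighted space; and by closed range of $\dbar$ one has $\ker\dbar=\mathcal{H}^q_t(\Omega)\oplus\range{\dbar}$, so any $f\in\ker\dbar\cap(\mathcal{H}^q_t(\Omega))^\bot$ is of the form $f=\dbar u$ with $u=\dbar^*_t N^q_t f\in L^2_{(0,q-1)}(\Omega)$. The only point that takes any thought is the dichotomy in (1): for disconnected $\partial\Omega$ one has only the compactness-type estimate with the $W^{-1}$ remainder, and the finite dimensionality of $\mathcal{H}^q_t(\Omega)$ genuinely leans on the compact Sobolev embedding rather than on a coercive basic estimate, while the vanishing $\mathcal{H}^q_t(\Omega)=\set{0}$ is available only in the connected case; everything else is bookkeeping within the classical theory.
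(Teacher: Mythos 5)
Your proof follows the same route the paper intends: the paper itself gives no proof, only the remark that these are ``standard consequences of such $L^2$ estimates,'' and you correctly trace (1) to part~\eqref{item:closed_range_estimate} of Proposition~\ref{prop:basic_estimates} plus the Rellich lemma (and part~\eqref{item:connected_boundary_estimate} for the vanishing when $\partial\Omega$ is connected), and (2)--(5), (8) to part~\eqref{item:solvability_estimate} via the strong Hodge decomposition.

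There is one genuine misstep, in the continuity of $N^q_t\dbar$ and $N^q_t\dbar^*_t$ in (6)--(7). You invoke intertwining relations $\dbar N^q_t = N^{q+1}_t\dbar$ and $\dbar^*_t N^q_t = N^{q-1}_t\dbar^*_t$, but these presuppose the existence of $N^{q\pm 1}_t$, which is not available: the weak $Z(q)$ hypothesis gives the basic estimate of Proposition~\ref{prop:basic_estimates} only at level $q$. In addition, even if those relations held they would bound $N^{q+1}_t\dbar$ and $N^{q-1}_t\dbar^*_t$, not the operators $N^q_t\dbar$ and $N^q_t\dbar^*_t$ that the theorem asserts are bounded. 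The correct (and standard) argument, as in Theorem~4.4.1 of \cite{ChSh01}, is by adjointness: $N^q_t$ is self-adjoint with $\range{N^q_t}\subset\dom{\dbar}\cap\dom{\dbar^*_t}$, so for $u\in\dom{\dbar}\subset L^2_{(0,q-1)}$ and $v\in L^2_{(0,q)}$ one has $(N^q_t\dbar u,v)_t=(\dbar u,N^q_t v)_t=(u,\dbar^*_t N^q_t v)_t$, whence $N^q_t\dbar$ extends to a bounded operator with norm at most $\norm{\dbar^*_t N^q_t}$; the same duality with $\dbar N^q_t$ controls $N^q_t\dbar^*_t$. With this replacement the argument is complete; the rest of your write-up is correct.
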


\section{Sobolev Estimates}
\label{sec:Sobolev}

In this section we will use elliptic regularization to obtain estimates in the $L^2$-Sobolev space $W^1$ when $\partial\Omega$ is connected.  The first author
obtained such estimates for $C^2$-pseudoconvex domains in \cite{Har09}.
In that paper, he used an exhaustion by smooth strictly pseudoconvex domains.
Although smooth $Z(q)$ domains can exhaust bounded weakly $Z(q)$ domains with connected boundaries, constructing
$\Upsilon$ on the exhaustion domains in such a way that the estimates are uniform may not be possible.
Hence, we will use elliptic regularization in the present paper. Our discussion follows
the argument in Section 3.3 of \cite{Str10}, focusing on steps where the reduced boundary regularity requires more careful estimates.

We will need two equivalent norms on $W^1(\Omega)$: the standard norm $\norm{u}^2_{W^1}=\norm{u}^2+\norm{\nabla u}^2$ and the weighted norm
$\norm{u}^2_{t,W^1}=\norm{u}^2_t+\norm{\nabla u}^2_t$.  Although these are equivalent, the constant involved will depend on $t$, so for estimates where the dependency on $t$ is significant we will need
to use the weighted norm.  Only at the end of the proof will we be able to pass to estimates for the standard norm, which is more suitable for interpolation.
For $u\in W^1_{(0,q)}(\Omega)\cap\dom{\dbar^*_t}$ and $\delta>0$ we define
\[
  Q_{t,\delta}(u,u)=\norm{\dbar u}^2_t+\norm{\dbar^*_t u}^2_t+\delta\norm{\nabla u}^2_t.
\]
As in \cite{Str10}, we have a unique self-adjoint operator $\Box^q_{t,\delta}$ on $L^2_{(0,q)}(\Omega)$ satisfying $(\Box^q_{t,\delta}u,v)_t=Q_{t,\delta}(u,v)$ for all $u\in\dom{\Box^q_{t,\delta}}$ and
$v\in W^1_{(0,q)}(\Omega)\cap\dom{\dbar^*_t}$, where $\dom{\Box^q_{t,\delta}}$ is the subspace of $W^1_{(0,q)}(\Omega)\cap\dom{\dbar^*_t}$ on which
$\tilde{\Box}^q_{t,\delta} u\in L^2_{(0,q)}(\Omega)$ and $\tilde{\Box}^q_{t,\delta}$ is the canonical identification between $W^1_{(0,q)}(\Omega)\cap\dom{\dbar^*_t}$ and its conjugate dual.
We also obtain a unique solution operator $N^q_{t,\delta}$ mapping $L^2_{(0,q)}(\Omega)$ onto $\dom{\Box^q_{t,\delta}}$ satisfying $(u,v)_t=Q_{t,\delta}(N^q_{t,\delta} u,v)$.

By Proposition 3.5 in \cite{Str10}, $N^q_{t,\delta}$ maps $L^2_{(0,q)}(\Omega)$ continuously to $W^2_{(0,q)}(\Omega)$.
Although this proposition is stated for smooth domains, the proof for the $s=0$ case  holds on $C^3$ domains and we now outline the key step to illustrate the role of boundary smoothness.
Let $\rho$ be the signed distance function for $\Omega$, so that $\rho$ is a $C^3$ defining function \cite{KrPa81}, and let $(x_1,\ldots,x_{2n-1})$ be coordinates on $\partial\Omega$ with $\rho$ as the transverse coordinate.
Similarly, we choose an orthonormal basis for $(1,0)$-forms consisting of $\omega_1,\ldots,\omega_n$, where $\omega_n=\partial\rho$.  If we express $u$ in this basis, then the components will involve first derivatives of $\rho$.
If we let $D^h_j$ denote a difference quotient with respect to $x_j$, we can define $D^h_j u$ by considering difference quotients of components of $u$ in our special basis.  This will preserve $\dom{\dbar^*}$, but uniform bounds on $D^h_j u$ will now involve the $C^2$ norm of $\rho$.  Finally we wish to estimate $Q_{t,\delta}(D^h_j u,v)$.  The details for this estimate are contained in (3.38) through (3.41) in \cite{Str10}, but we will simply observe that they involve uniform bounds for $[\dbar,D^h_j]u$, $[\dbar^*_t,D^h_j]u$, and $[\nabla,D^h_j]u$, which will all involve the $C^3$ norm of $\rho$.  Working with the smooth cutoff functions necessary to work locally will not involve additional derivative of $\rho$, so Straube's argument will allow us to bound tangential derivatives of $u$ in the $W^1$ norm.  As usual, the structure of $\Box^q_{t,\delta}$ as a second-order elliptic operator will allow us to estimate the second derivatives in the normal direction (which only involve second derivatives of $\rho$).

Our goal is to show that the $W^1$ norm of $N^q_{t,\delta} f$ is bounded by the $W^1$ norm of $f$ with a constant that is independent of $\delta$, so that we may use a limiting argument to show that this estimate also holds for $N^q_t f$.  Since $\dbar\oplus\dbar^*_t$ is an elliptic system, it will suffice to estimate tangential derivatives, but first we must clarify how a differential operator acts globally on a $(0,q)$-form.  Let $\psi\in W^2_{(0,q)}(\Omega)\cap\dom{\dbar^*_t}$ and let $T$ be a differential operator defined on $\overline\Omega$ that is tangential on the boundary.  Since $\partial\Omega$ is $C^3$, we may assume $T$ has $C^2$ coefficients.  For $U\subset M$ with local coordinates $\set{z_1,\ldots,z_n}$ we can write
\[
  \psi=\sum_{I\in\mathcal{I}_q}\psi_I d\bar{z}_I,
\]
where the components $\psi_I$ are all in $W^2(\Omega)$.  The covariant derivative $\nabla_T\psi$ is globally defined and in local coordinates we can write
\[
  \nabla_T\psi=\sum_{I\in\mathcal{I}_q}T\psi_I d\bar{z}_I+O(\psi),
\]
where the coefficients in the zero order term involve coefficients of $T$ and are hence $C^2$.  However, $\nabla_T\psi$ is probably not in the domain of $\dbar^*_t$.  On the other hand, for $U$ sufficiently small, we can also choose a $C^2$ orthonormal basis for the space of $(1,0)$-forms $\omega_1,\ldots,\omega_n$ where $\omega_n=\partial\rho$.  In this basis we write
\[
  \psi=\sum_{I\in\mathcal{I}_q}\tilde{\psi}_I \bar\omega_I.
\]
Since the transition matrices between $dz$ and $\omega$ have $C^2$ entries, $\tilde{\psi}_I$ can be obtained by applying a linear operator with $C^2$ coefficients to $\psi_I$.  We define
\[
  D^U_T\psi=\sum_{I\in\mathcal{I}_q}T\tilde{\psi}_I \omega_I,
\]
and sum over a partition of unity to obtain $D_T\psi$.  Note that $D_T$ preserves tangential and normal components of $\psi$, so it will also preserve the domain of $\dbar^*_t$.  However, returning to local coordinates,
\[
  D^U_T\psi=\sum_{I\in\mathcal{I}_q}T\psi_I d\bar{z}_I+O(\psi),
\]
where the coefficients of the zero order terms are obtained by differentiating the $C^2$ transitions matrices between $dz$ and $\omega$, so they are only $C^1$.  Hence, $D_T-\nabla_T$ is a zero-order operator with $C^1$ coefficients.  This requires some caution.  For example, if $\nabla_D^2$ is a second-order differential operator with $C^1$ coefficients, then $[\nabla_D^2,\nabla_T]$ is a second-order differential operator with continuous coefficients, and hence $[\nabla_D^2,\nabla_T]\psi$ is a form in $L^2_{(0,q)}$.  However, $[\nabla_D^2,D_T-\nabla_T]$ may not be a differential operator with continuous coefficients, so we cannot make use of $[\nabla_D^2,D_T]\psi$.  On the other hand, commutators between $D_T$ and first-order differential operators will still have continuous (hence bounded) coefficients.

For $\eps>0$, let $t_\eps$ and $t$ be as in \eqref{item:connected_boundary_estimate} of Proposition \ref{prop:basic_estimates}.  Then for $u\in W^1_{(0,q)}(\Omega)\cap\dom{\dbar^*_t}$, when $\partial\Omega$ is connected we have $\eps Q_{t,\delta}(u,u)\geq \norm{u}^2_t$.  Let $f\in W^1_{(0,q)}(\Omega)$.  We immediately obtain
\begin{equation}
\label{eq:N_delta_L2_estimate}
  \norm{N^q_{t,\delta}f}_t\leq\eps\norm{f}_t.
\end{equation}
Since $N^q_{t,\delta}f\in W^2_{(0,q)}$, we can set $u=D_T N^q_{t,\delta} f$ and obtain $u\in W^1_{(0,q)}(\Omega)\cap\dom{\dbar^*_t}$.  Hence,
\begin{equation}
\label{eq:T_N_delta_first_substitution}
  \norm{D_T N^q_{t,\delta} f}^2_t\leq \eps Q_{t,\delta}(D_T N^q_{t,\delta} f,D_T N^q_{t,\delta} f).
\end{equation}

To estimate $Q_{t,\delta}(D_T N^q_{t,\delta} f,D_T N^q_{t,\delta} f)$, we will need to work with slightly smoother forms.  To that end, we introduce the following density lemma:
\begin{lem}
\label{lem:density}
  Let $\Omega\subset M$ be a bounded domain with $C^3$ boundary, and let $u\in W^1_{(0,q)}(\Omega)\cap\dom{\dbar^*_t}$.  Then there exists a sequence $u_\ell\in C^2_{(0,q)}(\overline\Omega)\cap\dom{\dbar^*_t}$ converging to $u$ in the $W^1$ norm.
\end{lem}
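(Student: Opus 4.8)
The plan is to adapt the classical Friedrichs-type density argument (see e.g.\ Lemma 4.3.2 in \cite{ChSh01}) to the $W^1$ setting, being careful that the approximation respects the condition $\tau(\dbar\rho \wedge \cdot)u = 0$ on $\partial\Omega$, which is the pointwise characterization of $\dom{\dbar^*_t}$ for forms with coefficients in $W^1$. The first step is to localize: choose a finite cover of $\overline\Omega$ by coordinate patches, one of which is interior and the remainder of which are boundary patches on which we have the special $C^2$ orthonormal frame $\omega_1,\ldots,\omega_n$ with $\omega_n = \partial\rho$ (available since $\rho$ is $C^3$, so $\partial\rho$ is $C^2$ and the Gram-Schmidt process preserves $C^2$). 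Write $u = \sum_I \tilde u_I \bar\omega_I$ in each boundary patch; the condition $u\in\dom{\dbar^*_t}$ says exactly that $\tilde u_I = 0$ whenever $n\in I$, on $\partial\Omega$. Since $\tilde u_I \in W^1(\Omega)$, the trace is well-defined and this is a genuine pointwise boundary condition.

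The second step, in each boundary patch, is to translate inward: using the transverse coordinate given by $\rho$ (or the signed distance), push the ``bad'' components $\tilde u_I$ with $n\in I$ slightly into $\Omega$ so that they vanish in a neighborhood of $\partial\Omega$, while leaving the ``good'' components alone; then mollify everything. The translation of the bad components is continuous in $W^1$ (translation is continuous on $W^1$ of the half-space after flattening the boundary via the $C^3$ diffeomorphism, and flattening costs only $C^2$ regularity on the transition data, which is enough for a $W^1$ estimate), and after translating inward the convolution with a standard mollifier of small enough radius keeps the support of the bad components off the boundary, so the mollified form still lies in $\dom{\dbar^*_t}$ and is now $C^\infty$ in that patch. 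The good components need no translation: they may be mollified directly, and since the resulting form is smooth in the interior and its normal components vanish near the boundary after the above surgery, membership in $\dom{\dbar^*_t}$ is preserved. Patching with the partition of unity (whose cutoffs are smooth and hence contribute only bounded zero-order error) yields $u_\ell \to u$ in $W^1$ with each $u_\ell$ at least $C^2$ on $\overline\Omega$ and in $\dom{\dbar^*_t}$; one can in fact obtain $C^\infty$, but $C^2$ is all that is needed.

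The main obstacle is bookkeeping the regularity of the frame change: because $\rho$ is only $C^3$, the frame $\omega_1,\ldots,\omega_n$ is only $C^2$ and the transition matrices between the $\bar\omega_I$ and the coordinate $d\bar z_I$ are only $C^2$, so the operation ``extract the components $\tilde u_I$, translate, mollify, reassemble'' must be shown to be a bounded operation on $W^1$ rather than on some higher Sobolev space — this is exactly the point flagged in the paragraph preceding the lemma, where it is noted that $D_T - \nabla_T$ has only $C^1$ coefficients. Concretely, one must check that applying a $C^2$ (not smooth) linear transition map to a $W^1$ function, then translating, then mollifying, then applying the inverse $C^2$ transition map, converges in $W^1$ to the original; this follows because multiplication by a $C^1$ function is bounded on $W^1$ and commutes with mollification up to an error that tends to $0$ in $W^1$, by a standard Friedrichs commutator lemma. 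The remaining steps — continuity of translation on $W^1$, the fact that mollification converges in $W^1$, and that all the cutoffs and frame changes produce only lower-order terms controlled in $L^2$ — are routine, so I would state them briefly and refer to \cite{ChSh01} and \cite{Str10} for the analogous computations.
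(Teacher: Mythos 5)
Your proposal is correct in substance but takes a genuinely different, more hands-on route than the paper. The paper avoids local frames, translation, and mollification entirely: it introduces the global projection $\nu$ onto the space $I_q = \{f : f = \dbar\rho\wedge g\}$ (so $\chi\nu$ has $C^2(\overline\Omega)$ coefficients), observes that the normal part $\chi\nu u$ has $L^2$ boundary trace zero (by the usual characterization of $\dom{\dbar^*_t}$) and therefore has components in $W^1_0(\Omega)$, hence is approximable by $C^\infty_0$ forms $u^\nu_\ell$, takes a separate sequence $\tilde u_\ell \in C^\infty(\overline\Omega)$ converging to $u$ with no boundary condition, and sets $u_\ell = u^\nu_\ell + (1-\chi\nu)\tilde u_\ell$. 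The punch line is the algebraic identity $\nu(1-\chi\nu) = (1-\chi)\nu$, which kills the normal component of $(1-\chi\nu)\tilde u_\ell$ on $\partial\Omega$ (where $\chi\equiv 1$), so $u_\ell \in \dom{\dbar^*_t}$ automatically. This reduces the whole lemma to two black-box density facts and bypasses the Friedrichs commutator bookkeeping that occupies most of your argument. Your approach essentially reproves those density facts by hand via the local frame/translation/mollification construction, which is more elementary but also more fragile with respect to the regularity of the frame --- exactly the point you flag.

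One small imprecision worth fixing in your version: the ``good'' components cannot be ``mollified directly'' as stated, since convolution on $\Omega$ does not produce a function that is $C^2$ up to $\overline\Omega$; one must either translate those components outward (deeper into the domain of definition) before mollifying, or invoke a bounded extension operator $W^1(\Omega)\to W^1(M)$, just as the ``bad'' components must be extended by zero across $\partial\Omega$ before the inward translation makes them vanish in a collar. These are both standard, but they should be said. With that fix the proof goes through; the $C^2$ regularity of the frame is indeed enough for all the $W^1$ boundedness statements you use, and the patching argument preserves $\dom{\dbar^*_t}$ because the projection onto normal components is defined consistently across overlapping boundary patches.
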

\begin{proof}
  Let $\rho$ be a $C^3$ defining function for $\Omega$, and let
  \[
    I_q=\set{f\in L^2_{(0,q)}(\Omega):f=\dbar\rho\wedge g,g\in L^2_{(0,q-1)}(\Omega)}.
  \]
Choose $\chi\in C^\infty_0(M)$ such that $\chi\equiv 1$ in a neighborhood of $\partial\Omega$ and $\dbar\rho\neq 0$ on the support of $\chi$.
If we let $\nu$ denote the orthogonal projection onto $I_q$ where defined, then $\chi\nu$ is a linear operator with $C^2(\overline\Omega)$ coefficients.  Since $u\in W^1_{(0,q)}(\Omega)$,
$u$ has a boundary trace in $L^2$.  By the usual density lemma (e.g., Lemma 4.3.2 in \cite{ChSh01}) and the usual characterization of $\dom{\dbar^*_t}$ (e.g., Lemma 4.2.1 in \cite{ChSh01}), we have that
the boundary trace of $\chi\nu u$ is zero a.e.  Since components of $\chi\nu u$ are in $W^1_0(\Omega)$, $\chi\nu u$ is the limit in $W^1$ of a sequence $u^\nu_\ell\in C^\infty_{0,(0,q)}(\Omega)$, so we can write
$u_\ell^\nu\rightarrow \chi\nu u$ in $W^1_{(0,q)}(\Omega)$.  We can also write $u$ as a limit in $W^1_{(0,q)}(\Omega)$ of forms
$\tilde u_\ell\in C^\infty_{(0,q)}(\overline\Omega)$.  If we set $u_\ell=u_\ell^\nu+(1-\chi\nu)\tilde u_\ell$, then $u_\ell\rightarrow u$ in $W^1_{(0,q)}(\Omega)$ since
  \[
    \norm{u_\ell-u}_{W^1}\leq\norm{u_\ell^\nu-\chi\nu u}_{W^1}+\norm{(1-\chi\nu)(\tilde u_\ell-u)}_{W^1}.
  \]
  Since $\nu$ has $C^2(\overline\Omega)$ coefficients, $u_\ell$ is in $C^2_{(0,q)}(\overline\Omega)$.
  Furthermore, since $\nu(1-\chi\nu)=(1-\chi)\nu$, we have $\nu u_\ell=0$ on the boundary of $\Omega$, so $u_\ell\in\dom{\dbar^*_t}$.
\end{proof}

With this density lemma in place, we are ready to prove the key lemma for our estimate (analogous to (3.50) in \cite{Str10}).
\begin{lem}\label{lem:elliptic_regularization_key_step}
  Let $\Omega\subset M$ be a bounded domain with connected $C^3$ boundary satisfying weak $Z(q)$.  There exist constants $C>0$ independent of $t$ and $C_t>0$ depending on $t$ such that for any $f\in W^1_{(0,q)}(\Omega)$ and differential operator $T$ with $C^2(\overline\Omega)$ coefficients that is tangential on the boundary of $\Omega$, we have
  \begin{equation}
  \label{eq:elliptic_regularization_key_step}
    Q_{t,\delta}(D_T N^q_{t,\delta} f,D_T N^q_{t,\delta} f)\leq C(D_T f,D_T N^q_{t,\delta} f)_t+C\norm{N^q_{t,\delta}f}^2_{t,W^1}
    +C_t\norm{f}_t^2.
  \end{equation}
\end{lem}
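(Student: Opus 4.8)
The plan is to adapt Straube's proof of (3.50) in \cite{Str10} to the present $C^3$ setting. Write $u=N^q_{t,\delta}f\in W^2_{(0,q)}(\Omega)\cap\dom{\dbar^*_t}$, so that $Q_{t,\delta}(u,v)=(f,v)_t$ for every $v\in W^1_{(0,q)}(\Omega)\cap\dom{\dbar^*_t}$. Morally one wants to take $v=D_T^\sharp D_T u$, with $D_T^\sharp$ the formal adjoint of $D_T$ in the weighted inner product, but since $u$ is only in $W^2$ this $v$ lies merely in $L^2$; the fix is to replace one factor of $D_T$ by a tangential difference quotient (or, equivalently, to use Lemma~\ref{lem:density} to approximate $D_Tu$ in $W^1$ by forms in $C^2_{(0,q)}(\overline\Omega)\cap\dom{\dbar^*_t}$), carry out the estimate below with constants independent of the regularization parameter, and then pass to the limit. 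Two facts are used throughout: because $T$ is tangential on $\partial\Omega$, integrating $D_T$ or $D_T^\sharp$ by parts produces no boundary term; and $D_T$ maps $\dom{\dbar^*_t}$ into itself, since in the $C^2$ orthonormal frame of $(1,0)$-forms $\omega_1,\dots,\omega_n$ with $\omega_n=\partial\rho$ it acts componentwise and so preserves the tangential/normal decomposition.

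The heart of the argument is to commute $D_T$ through the quadratic form, writing $Q_{t,\delta}(D_Tu,D_Tu)=Q_{t,\delta}(u,D_T^\sharp D_Tu)+\mathcal{E}$, where $\mathcal{E}$ gathers the commutator contributions built from $[\dbar,D_T]$, $[\dbar^*_t,D_T]$, $[\nabla,D_T]$ and their adjoints. For the first term I would use $Q_{t,\delta}(u,D_T^\sharp D_Tu)=(f,D_T^\sharp D_Tu)_t$ and integrate the outer $D_T^\sharp$ by parts, which yields $(D_Tf,D_Tu)_t$ plus a remainder pairing $f$ against a zeroth-order-in-$u$ expression coming from the $C^2$ coefficients of $T$; the latter is $O(\norm{f}_t\norm{u}_t)$, and since $\norm{u}_t=\norm{N^q_{t,\delta}f}_t\le\eps\norm{f}_t$ by \eqref{eq:N_delta_L2_estimate}, it is absorbed into $C_t\norm{f}^2_t$. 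For $\mathcal{E}$: the operators $[\dbar,D_T]$ and $[\nabla,D_T]$ are first-order with continuous coefficients that do not involve $t$, so Cauchy--Schwarz with a small constant bounds their contribution by $\eta\,Q_{t,\delta}(D_Tu,D_Tu)+C_\eta\norm{u}^2_{t,W^1}$ with $C_\eta$ independent of $t$; the commutator $[\dbar^*_t,D_T]$ differs from $[\dbar^*,D_T]$ only by a $t$-dependent zeroth-order term (essentially $t$ times a first derivative of $\phi$ along $T$), whose contribution is $\eta\,Q_{t,\delta}(D_Tu,D_Tu)+C\norm{u}^2_{t,W^1}+C_t\norm{u}^2_t$, and again $C_t\norm{u}^2_t\le C_t\norm{f}^2_t$. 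Absorbing the $\eta\,Q_{t,\delta}(D_Tu,D_Tu)$ terms into the left-hand side gives \eqref{eq:elliptic_regularization_key_step}.

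The delicate point, and where I expect the real work to lie, is the interaction of the $\delta$-part of $Q_{t,\delta}$ with the limited boundary regularity. First, $D_T$ agrees with the covariant derivative $\nabla_T$ only up to a zeroth-order operator whose coefficients are obtained by differentiating the $C^2$ transition matrices between the $dz$-frame and the $\omega$-frame, hence are only $C^1$; consequently $D_T-\nabla_T$ may be commuted with first-order operators (the resulting coefficients remain bounded) but \emph{not} with second-order operators, so every genuinely second-order manipulation must be performed with $\nabla_T$ and converted back to $D_T$ only after the order has been lowered. Second, and more seriously, in the $\delta$-part of $\mathcal{E}$, namely $\delta\bigl[(\nabla D_Tu,\nabla D_Tu)_t-(\nabla u,\nabla(D_T^\sharp D_Tu))_t\bigr]$, the contributions that are genuinely of second order in $u$ must cancel up to first-order errors; otherwise an uncontrolled term of size $\delta\norm{u}^2_{W^2}$ would survive (recall $N^q_{t,\delta}$ is only known to map into $W^2$ with a norm that may degenerate as $\delta\to0$). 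This cancellation is exactly the bookkeeping of (3.38)--(3.41) in \cite{Str10}, performed here with one fewer derivative of $\rho$ at our disposal; it is what forces the $C^3$ boundary and the $C^2$-coefficient hypothesis on $T$, and it is the crux of the proof.
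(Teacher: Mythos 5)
Your overall strategy matches the paper's: commute $D_T$ through $Q_{t,\delta}$, invoke the defining property $Q_{t,\delta}(N^q_{t,\delta}f,v)=(f,v)_t$ after moving $D_T$ to the test form, regularize via Lemma~\ref{lem:density}, and then control commutators. You also correctly flag the $C^1$-coefficient issue for $D_T-\nabla_T$, and the facts that tangentiality kills boundary terms and that $D_T$ preserves $\dom{\dbar^*_t}$ in the $\omega$-frame. These are all the right ingredients.

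There is, however, a genuine gap in your commutator estimate. After integrating $D_T$ by parts once, the surviving error terms have the shape
$(\dbar N^q_{t,\delta}f,\,[(D_T)^*_t,\dbar]\,D_T N^q_{t,\delta}f)_t$ (and analogues with $\dbar^*_t$ and $\nabla$). Here $[(D_T)^*_t,\dbar]$ is genuinely \emph{first}-order, and it is applied to $D_T N^q_{t,\delta}f$, so Cauchy--Schwarz leaves you with a factor $\norm{D_T N^q_{t,\delta}f}_{t,W^1}$, which contains $\norm{\nabla D_T N^q_{t,\delta}f}_t$. The quadratic form $Q_{t,\delta}(D_T N^q_{t,\delta}f,D_T N^q_{t,\delta}f)$ controls that quantity only with an extra factor $1/\delta$, so your claimed bound $\eta\,Q_{t,\delta}(D_Tu,D_Tu)+C_\eta\norm{u}^2_{t,W^1}$ is \emph{not} uniform in $\delta$. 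This also means your diagnosis of where the difficulty lies is off: the $\dbar$ and $\dbar^*_t$ parts of $\mathcal{E}$ produce the same uncontrolled second-order terms as the $\delta$-part, not only the $\delta$-part.

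The paper closes this gap by an additional maneuver that your sketch omits. Writing, say, $[(D_T)^*_t,\dbar^*_t]=A+B_t$ with $A$ zero-order (continuous coefficients, $t$-independent) and $B_t$ first-order ($C^1$ coefficients), one commutes $D_T$ past $B_t$ to get $D_T B_t N^q_{t,\delta}f$ plus harmless lower-order terms, then integrates $D_T$ by parts a \emph{second} time. This produces the quantity $Q_{t,\delta}((D_T)^*_t N^q_{t,\delta}f,(D_T)^*_t N^q_{t,\delta}f)$, which is then related back to $Q_{t,\delta}(D_T N^q_{t,\delta}f,D_T N^q_{t,\delta}f)$ via the identity $(D_T)^*_t=-D_T+a_t$ for a $C^1$ function $a_t$: one shows $Q_{t,\delta}((D_T)^*_t N^q_{t,\delta}f,(D_T)^*_t N^q_{t,\delta}f)\le C\,Q_{t,\delta}(D_T N^q_{t,\delta}f,D_T N^q_{t,\delta}f)+C_t\norm{f}^2_t$, and only then can the resulting $Q_{t,\delta}$ term be absorbed. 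Without this second integration by parts and the $(D_T)^*_t=-D_T+a_t$ trick, the argument does not close, independently of how carefully the $\delta$-part is bookkept.
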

\begin{proof}
  We will adopt the convention that the values of $C>0$ and $C_t>0$ may increase from line to line.  Let $u=D_T N^q_{t,\delta} f$ and let $u_\ell\in C^2_{(0,q)}(\overline\Omega)\cap \dom{\dbar^*_t}$ be the sequence converging to $D_T N^q_{t,\delta}f$ given by Lemma \ref{lem:density}.  Note that the principal part of $\dbar^*_t$ is the same as the principal part of $\dbar^*$, so only the low order terms depend on $t$.  Hence, $[D_T,\dbar^*_t]$ has a first order component which is independent of $t$ and a lower order component which depends on $t$, so
  \[
  \begin{split}
    \norm{[D_T,\dbar^*_t] N^q_{t,\delta}f}_t&\leq C\norm{N^q_{t,\delta}f}_{t,W^1}+C_t\norm{N^q_{t,\delta}f}_t\\
    &\leq C\norm{N^q_{t,\delta}f}_{t,W^1}+C_t\norm{f}_t,
  \end{split}
  \]
  where the second inequality follows from \eqref{eq:N_delta_L2_estimate}.  Estimating commutators in this fashion gives us
  \begin{multline*}
    Q_{t,\delta}(u,u_\ell)\leq(D_T\dbar N^q_{t,\delta} f,\dbar u_\ell)_t+(D_T\dbar^*_t N^q_{t,\delta} f,\dbar^*_t u_\ell)_t
    +\delta(D_T\nabla  N^q_{t,\delta} f,\nabla u_\ell)_t\\
    +\left(C\norm{N^q_{t,\delta}f}_{t,W^1}+C_t\norm{f}_t\right)\sqrt{Q_{t,\delta}(u_\ell,u_\ell)}.
  \end{multline*}
  Since $T$ is tangential, we can integrate by parts and commute again to obtain
  \begin{multline*}
    Q_{t,\delta}(u,u_\ell)\leq Q_{t,\delta}(N^q_{t,\delta} f,(D_T)^*_t u_\ell)+(\dbar N^q_{t,\delta} f,[(D_T)^*_t,\dbar] u_\ell)_t\\+(\dbar^*_t N^q_{t,\delta} f,[(D_T)^*_t,\dbar^*_t]u_\ell)_t
    +\delta(\nabla  N^q_{t,\delta} f,[(D_T)^*_t,\nabla] u_\ell)_t\\
    +\left(C\norm{N^q_{t,\delta}f}_{t,W^1}+C_t\norm{f}_t\right)\sqrt{Q_{t,\delta}(u_\ell,u_\ell)}.
  \end{multline*}
  By definition,
  \[
    Q_{t,\delta}(N^q_{t,\delta} f,(D_T)^*_t u_\ell)=(f,(D_T)^*_t u_\ell)_t=(D_T f,u_\ell)_t.
  \]
  Since all terms with $u_\ell$ can now be estimated by the $W^1$ norm of $u_\ell$, we can take limits and obtain
  \begin{multline*}
    Q_{t,\delta}(u,u)\leq(D_T f,u)_t+(\dbar N^q_{t,\delta} f,[(D_T)^*_t,\dbar] u)_t+(\dbar^*_t N^q_{t,\delta} f,[(D_T)^*_t,\dbar^*_t]u)_t
    \\+\delta(\nabla  N^q_{t,\delta} f,[(D_T)^*_t,\nabla] u)_t
    +\left(C\norm{N^q_{t,\delta}f}_{t,W^1}
    +C_t\norm{f}_t\right)\sqrt{Q_{t,\delta}(u,u)}.
  \end{multline*}
  Now, we may also use the estimate
  \begin{multline*}
    \left(C\norm{N^q_{t,\delta}f}_{t,W^1}+C_t\norm{f}_t\right)\sqrt{Q_{t,\delta}(u,u)}\leq\\
    \frac{1}{2}\left(C\norm{N^q_{t,\delta}f}_{t,W^1}+C_t\norm{f}_t\right)^2+\frac{1}{2}Q_{t,\delta}(u,u)
  \end{multline*}
  and absorb the last term in the left-hand side to obtain
  \begin{multline*}
    Q_{t,\delta}(u,u)\leq 2(D_T f,u)_t+2(\dbar N^q_{t,\delta} f,[(D_T)^*_t,\dbar] u)_t+2(\dbar^*_t N^q_{t,\delta} f,[(D_T)^*_t,\dbar^*_t]u)_t
    \\+2\delta(\nabla  N^q_{t,\delta} f,[(D_T)^*_t,\nabla] u)_t
    +C\norm{N^q_{t,\delta}f}^2_{t,W^1}
    +C_t\norm{f}^2_t.
  \end{multline*}
  The remaining commutators will each be estimated using the same technique; we will illustrate the method with $(\dbar^*_t N^q_{t,\delta} f,[(D_T)^*_t,\dbar^*_t]u)_t$ since it could potentially involve additional factors of $t$.  Recall that $\nabla_T$ has $C^2$ coefficients, while $D_T-\nabla_T$ is a zero-order operator with $C^1$ coefficients.  Since adjoints of zero-order operators won't introduce additional factors of $t$, we can break down the commutator as follows:
  \begin{equation}\label{eqn:commute D_T* and dbar_t*}
    [(D_T)^*_t,\dbar^*_t]=[(D_T-\nabla_T)^*_t,\dbar^*]+[(D_T-\nabla_T)^*_t,(\dbar^*_t-\dbar^*)]+[(\nabla_T)^*_t,\dbar^*_t],
  \end{equation}
  The zero-order component of $[(D_T-\nabla_T)^*_t,\dbar^*]$ is independent of $t$ but only has continuous coefficients; we denote this $A$.  If we let $B_t=[(D_T)^*_t,\dbar^*_t]-A$, then $B_t$ is a first-order operator depending on $t$ (in the zero-order component) with $C^1$ coefficients.  Thus, if we commute $D_T$ with $B_t$ but not $A$ we obtain
  \[
  \begin{split}
    (\dbar^*_t N^q_{t,\delta} f,[(D_T)^*_t,\dbar^*_t]u)_t&=(\dbar^*_t N^q_{t,\delta} f,(A+B_t) D_T N^q_{t,\delta} f)_t\\&\leq(\dbar^*_t N^q_{t,\delta} f,D_T B_t N^q_{t,\delta} f)_t+C\norm{N^q_{t,\delta} f}^2_{t,W^1}+C_t\norm{f}^2_t.
  \end{split}
  \]
  Integrating $D_T$ by parts, commuting with $\dbar^*_t$, and using (\ref{eqn:commute D_T* and dbar_t*}), we have
  \[
    (\dbar^*_t N^q_{t,\delta} f,[(D_T)^*_t,\dbar^*_t]u)_t\leq(\dbar^*_t (D_T)^*_t N^q_{t,\delta} f,B_t N^q_{t,\delta} f)_t+C\norm{N^q_{t,\delta} f}^2_{t,W^1}+C_t\norm{f}^2_t.
  \]
  However, this can be bounded by
  \begin{multline*}
    (\dbar^*_t N^q_{t,\delta} f,[(D_T)^*_t,\dbar^*_t]u)_t\leq\sqrt{Q_{t,\delta}((D_T)^*_t N^q_{t,\delta} f,(D_T)^*_t N^q_{t,\delta} f)}\\
    \times\left(C\norm{N^q_{t,\delta} f}_{t,W^1}+C_t\norm{f}_t\right)+C\norm{N^q_{t,\delta} f}^2_{t,W^1}+C_t\norm{f}^2_t.
  \end{multline*}
  The same upper bound will be obtained if $\dbar^*_t$ is replaced with $\dbar$ or $\nabla$, so we have
  \begin{multline}
  \label{eq:almost_elliptic_regularization_estimate}
    Q_{t,\delta}(u,u)\leq \sqrt{Q_{t,\delta}((D_T)^*_t N^q_{t,\delta} f,(D_T)^*_t N^q_{t,\delta} f)}\left(C\norm{N^q_{t,\delta} f}_{t,W^1}+C_t\norm{f}_t\right)\\
    +2(D_T f,u)_t+C\norm{N^q_{t,\delta}f}^2_{t,W^1}
    +C_t\norm{f}^2_t.
  \end{multline}
  Note that $(D_T)^*_t=-D_T+a_t$, where $a_t$ is a $C^1$ function depending on $t$. In particular,
  \begin{multline*}
    Q_{t,\delta}((D_T)^*_t N^q_{t,\delta} f+u,(D_T)^*_t N^q_{t,\delta} f+u)=Q_{t,\delta}(a_t N^q_{t,\delta} f,a_t N^q_{t,\delta} f)\\
    \leq(f,\abs{a_t}^2 N^q_{t,\delta}f)_t+C_t\norm{f}_t\sqrt{Q_{t,\delta}(a_t N^q_{t,\delta} f,a_t N^q_{t,\delta} f)}.
  \end{multline*}
  After using the small constant/large constant inequality to absorb the $Q_{t,\delta}$ term on the right-hand side, we have
  \[
    Q_{t,\delta}((D_T)^*_t N^q_{t,\delta} f+u,(D_T)^*_t N^q_{t,\delta} f+u)\leq C_t\norm{f}^2_t,
  \]
  and hence
  \[
    Q_{t,\delta}((D_T)^*_t N^q_{t,\delta} f,(D_T)^*_t N^q_{t,\delta} f)\leq C Q_{t,\delta}(u,u)+C_t\norm{f}^2_t.
  \]
Substituting this into \eqref{eq:almost_elliptic_regularization_estimate} and again using the small constant/large constant estimate to absorb the
$Q_{t,\delta}$ term on the left-hand side will imply \eqref{eq:elliptic_regularization_key_step}.
\end{proof}
Combining Lemma \ref{lem:elliptic_regularization_key_step} and \eqref{eq:T_N_delta_first_substitution}, we have
\[
  \norm{D_T N^q_{t,\delta} f}^2_t\leq\eps C\left(\abs{(D_T f,D_T N^q_{t,\delta} f)_t}+\norm{N^q_{t,\delta}f}^2_{t,W^1}\right)+C_t\norm{f}_t^2.
\]
Another application of the small constant/large constant inequality to the first term on the right-hand side gives us
\[
  \norm{D_T N^q_{t,\delta} f}^2_t\leq\eps C\left(\norm{f}^2_{t,W^1}+\norm{N^q_{t,\delta}f}^2_{t,W^1}\right)+C_t\norm{f}_t^2.
\]
By Lemma 2.2 in \cite{Str10}, the normal derivatives of $N^q_{t,\delta}f$ consist of linear combinations of $\dbar N^q_{t,\delta}f$, $\dbar^* N^q_{t,\delta}f$, and tangential derivatives of $N^q_{t,\delta}f$.
We can convert $\dbar^*$ to $\dbar^*_t$ by adding a zeroth order term, so summing over all tangential derivatives and the normal derivative gives us
\[
  \norm{N^q_{t,\delta} f}^2_{t,W^1}\leq\eps C\left(\norm{f}^2_{t,W^1}+\norm{N^q_{t,\delta}f}^2_{t,W^1}\right)+C_t\norm{f}_t^2.
\]
When $\eps$ is sufficiently small (and hence $t$ is sufficiently large), we can absorb the $W^1$ norm of $N^q_{t,\delta}f$ on the left-hand side, obtaining
\begin{equation}
\label{eq:W1_estimate}
  \norm{N^q_{t,\delta} f}^2_{t,W^1}\leq\eps C\norm{f}^2_{t,W^1}+C_t\norm{f}_t^2.
\end{equation}
Since all constants have been chosen independently of $\delta$, the usual limiting argument will give us $W^1$ estimates for $N^q_t$.  Standard arguments now give us
\begin{thm}
\label{thm:Sobolev_Estimates}
  Let $M$ be an $n$-dimensional Stein manifold, and let $\Omega$ be a bounded subset of $M$ with connected $C^3$ boundary satisfying weak $Z(q)$ for some $1\leq q\leq n-1$.  Then there exists a constant $\tilde t>0$ such that for all $t>\tilde t$ and $-\frac{1}{2}\leq s\leq 1$ we have
  \begin{enumerate}
    \item \label{item:Sobolev_Neumann}The weighted $\dbar$-Neumann operator $N_t^q$ is continuous in $W^s_{(0,q)}(\Omega)$.

    \item \label{item:Sobolev_dbar_star_Neumann}The canonical solution operators to $\dbar$ given by\\ $\dbar^*_t N_t^q:W^s_{(0,q)}(\Omega)\rightarrow W^s_{(0,q-1)}(\Omega)$ and $N_t^q\dbar^*_t:W^s_{(0,q+1)}(\Omega)\cap\dom{\dbar^*_t}\rightarrow W^s_{(0,q)}(\Omega)$ are continuous.

    \item \label{item:Sobolev_dbar_Neumann}The canonical solution operators to $\dbar^*_t$ given by\\ $\dbar N_t^q:W^s_{(0,q)}(\Omega)\rightarrow W^s_{(0,q+1)}(\Omega)$ and $N_t^q\dbar:W^s_{(0,q-1)}(\Omega)\cap\dom{\dbar}\rightarrow W^s_{(0,q)}(\Omega)$ are continuous.

    \item \label{item:Sobolev_solve_dbar}For every $f\in W^s_{(0,q)}(\Omega)\cap\ker\dbar$ there exists a $u\in W^s_{(0,q-1)}(\Omega)$ such that $\dbar u=f$.
  \end{enumerate}
\end{thm}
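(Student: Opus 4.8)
The plan is to push the $\delta$-uniform bound \eqref{eq:W1_estimate} to a genuine $W^1$ estimate for $N^q_t$ and its companion solution operators, then to interpolate against the $L^2$ statements of Theorem \ref{thm:L2_theory} to cover $0\le s\le 1$, and finally to transpose by duality to cover $-\tfrac12\le s\le 0$; part \eqref{item:Sobolev_solve_dbar} will then follow from part \eqref{item:Sobolev_dbar_star_Neumann}. For the first step, recall that since $\partial\Omega$ is connected, Theorem \ref{thm:L2_theory} gives $\mathcal H^q_t(\Omega)=\{0\}$ and the existence of $N^q_t$, while the standard monotonicity of the regularized forms $Q_{t,\delta}$ as $\delta\downarrow 0$ yields $N^q_{t,\delta}f\to N^q_t f$ in $L^2_{(0,q)}(\Omega)$ (cf.\ \cite{Str10}). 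Since the constants in \eqref{eq:W1_estimate} do not depend on $\delta$, the family $\{N^q_{t,\delta}f\}_{\delta>0}$ is bounded in $W^1_{(0,q)}(\Omega)$; a weakly convergent subsequence has limit $N^q_t f$, and weak lower semicontinuity of the $W^1$ norm gives $\norm{N^q_t f}^2_{t,W^1}\le\eps C\norm{f}^2_{t,W^1}+C_t\norm{f}^2_t$, hence $W^1$-continuity of $N^q_t$ once $\eps$ is small. Rerunning the argument of this section with $\dbar N^q_{t,\delta}f$ and $\dbar^*_t N^q_{t,\delta}f$ in place of $N^q_{t,\delta}f$ --- the new ingredients being Straube's normal-derivative identity (Lemma 2.2 of \cite{Str10}) for these forms together with $\dbar^2=(\dbar^*_t)^2=0$, $\Box^q_{t,\delta}N^q_{t,\delta}=I$, and the control of the $\delta\nabla^*\nabla$ part of $\Box^q_{t,\delta}$ exactly as in \cite[\S3.3]{Str10} and \cite{Har09} --- produces $\delta$-uniform $W^1$ bounds, hence $W^1$-continuity, for $\dbar N^q_t$, $\dbar^*_t N^q_t$, $N^q_t\dbar$, and $N^q_t\dbar^*_t$.

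Theorem \ref{thm:L2_theory} already provides continuity of all of $N^q_t$, $\dbar N^q_t$, $\dbar^*_t N^q_t$, $N^q_t\dbar$, and $N^q_t\dbar^*_t$ on $W^0=L^2$, so complex interpolation against the $W^1$ bounds above gives continuity on $W^s$ for $0\le s\le 1$. To reach $-\tfrac12\le s\le 0$ one transposes: $N^q_t$ is self-adjoint for $(\cdot,\cdot)_t$, $N^q_t\dbar^*_t$ is the $(\cdot,\cdot)_t$-adjoint of $\dbar N^q_t$, and $N^q_t\dbar$ is the adjoint of $\dbar^*_t N^q_t$; since $(W^s_{(0,q)}(\Omega))^*\cong W^{-s}_{(0,q)}(\Omega)$ under the weighted pairing for $|s|\le\tfrac12$, the bounds just established for $0\le -s\le\tfrac12$ transpose to bounds on $W^s$ for $-\tfrac12\le s\le 0$ (each operator being first extended to these spaces by density of $L^2$ in $W^s$). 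This proves \eqref{item:Sobolev_Neumann}--\eqref{item:Sobolev_dbar_Neumann}. For \eqref{item:Sobolev_solve_dbar}, given $f\in W^s_{(0,q)}(\Omega)\cap\ker\dbar$ put $u=\dbar^*_t N^q_t f$; then $u\in W^s_{(0,q-1)}(\Omega)$ by \eqref{item:Sobolev_dbar_star_Neumann}, and $\dbar u=f$ by Theorem \ref{thm:L2_theory}, since $\mathcal H^q_t(\Omega)=\{0\}$ makes $\dbar^*_t N^q_t$ a solution operator for $\dbar$ on $\ker\dbar$.

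The main obstacle is the $\delta$-uniformity of the $W^1$ bounds for the solution operators, and especially for $\dbar N^q_t$ and $\dbar^*_t N^q_t$, which are first-order in $N^q_t$ and so are not controlled formally by $W^1$-continuity of $N^q_t$ alone. Securing them requires a careful rerun of the elliptic-regularization estimate of this section with those forms, controlling tangential derivatives through $Q_{t,\delta}$ applied to $D_T N^q_{t,\delta}f$ and normal derivatives through Straube's Lemma 2.2, while tracking the $\delta\nabla^*\nabla$ term in $\Box^q_{t,\delta}$ and showing it is harmless --- precisely the point at which the reduced $C^3$ boundary regularity already demanded the extra care recorded in this section. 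A secondary subtlety is the endpoint $s=-\tfrac12$: Sobolev duality on a bounded domain carrying the free $\dbar$-Neumann condition of $\dom{\dbar^*_t}$ is borderline there, which (as in \cite{Har09}) is why the range is not pushed below $-\tfrac12$; one handles it by interpolating the duality, e.g.\ through $W^{-1/2}=[W^0,W^{-1}]_{1/2}$, rather than invoking it at the endpoint directly.
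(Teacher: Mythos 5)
Your overall architecture --- $W^1$ via $\delta$-uniform bounds and weak limits, interpolation to $0\le s\le 1$, transposition via the self-adjointness of $N^q_t$ and the adjoint pairings $(\dbar N^q_t)^* = N^q_t\dbar^*_t$, $(\dbar^*_t N^q_t)^* = N^q_t\dbar$ for $-\tfrac12\le s\le 0$, and deducing (4) from (2) --- matches the paper's. The genuine gap is in how you propose to obtain the $W^1$ bounds for the four solution operators. You say to ``rerun the elliptic-regularization argument with $\dbar N^q_{t,\delta}f$ and $\dbar^*_t N^q_{t,\delta}f$ in place of $N^q_{t,\delta}f$,'' and claim this produces $W^1$ bounds for all four operators, but this does not work for $N^q_t\dbar$ and $N^q_t\dbar^*_t$. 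Rerunning with the form $N^q_{t,\delta}\dbar f_1$ produces, through the step $Q_{t,\delta}(N^q_{t,\delta}\dbar f_1,(D_T)^*_t u_\ell)=(D_T\dbar f_1,u_\ell)_t$, a term that you can only bound by $\norm{\dbar f_1}_{t,W^1}\lesssim\norm{f_1}_{t,W^2}$, which is one derivative too many. The paper closes exactly this gap with a separate argument: starting from the a priori inequality $\norm{N^q_t f}^2_{t,W^1}\le\eps(\norm{\dbar N^q_t f}^2_{t,W^1}+\norm{\dbar^*_t N^q_t f}^2_{t,W^1})+C_t\norm{N^q_t f}^2_t$, substituting $f=\dbar f_1$ so that $\dbar N^q_t\dbar f_1=0$, and then performing \emph{two} integrations by parts in the $W^1$ inner product to convert $(N^q_t\dbar f_1,\dbar f_1)_{t,W^1}$ into $(\dbar^*_t N^q_t\dbar f_1,f_1)_{t,W^1}$, so that only $\norm{f_1}_{t,W^1}$ appears on the right. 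Similarly, for $\dbar N^q_t$ and $\dbar^*_t N^q_t$, the paper does not rerun the regularization but invokes Lemma 3.2 of \cite{Str10} directly (combined with the density and regularization already established). Your sketch acknowledges that these operators are ``first-order in $N^q_t$ and not controlled formally by $W^1$-continuity of $N^q_t$,'' which is the right diagnosis, but the cure you propose stops short of the integration-by-parts maneuver that actually saves the derivative.

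A secondary point: your remark on the $s=-\tfrac12$ endpoint, that one should ``interpolate the duality through $W^{-1/2}=[W^0,W^{-1}]_{1/2}$,'' is not usable as written --- it would require a priori $W^{-1}$ bounds, which you do not have. The paper simply applies $(W^s)^*\cong W^{-s}$ for $\abs{s}\le\tfrac12$ to the operators on $W^{-s}$ for $0\le -s\le\tfrac12$; no additional interpolation is needed at that endpoint.
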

\begin{rem} Theorem \ref{thm:Sobolev_Estimates} is identical to Theorem \ref{thm:main_theorem2} except for the additional hypothesis that $\partial\Omega$ is connected. Also,
$N^q_t\dbar^*_t$ is a problematic solution operator to $\dbar$ since it places a boundary condition on the data.
However, estimates for this operator are needed to obtain estimates for the dual
operator $\dbar N^q_t$ in the dual Sobolev space.
\end{rem}
\begin{proof}
We have already completed the proof of \eqref{item:Sobolev_Neumann} when $s=0$ and $s=1$, so interpolation will give us
the result for $0<s<1$.  We emphasize that the weighted Sobolev spaces used in \eqref{eq:W1_estimate} are equivalent to standard Sobolev spaces which are amenable to interpolation.
When $-\frac{1}{2}\leq s<0$ we use the duality between $W^s$ and $W^{-s}$ and the fact that $N^q_t$ is self-adjoint:
  \[
  \begin{split}
    \norm{N^q_t f}_{W^s}&\leq C_t\sup_{h\in W^{-s}(\Omega),\norm{h}_{W^{-s}=1}}\abs{(N^q_t f,h)_t}\\
    &\leq C_t\sup_{h\in W^{-s}(\Omega),\norm{h}_{W^{-s}=1}}\abs{(f,N^q_t h)_t}\leq C_t\norm{f}_{W^s}.
  \end{split}
  \]

  For $\dbar^*_t N^q_t$ and $\dbar N^q_t$, we use Lemma 3.2 in \cite{Str10}.  Although this Lemma assumes $N^q_t f$ is smooth, we can use elliptic regularization and Lemma \ref{lem:density} as before to obtain sufficient regularity.  Interpolation will give us estimates for $0\leq s\leq 1$.

  For $N^q_t\dbar$ and $N^q_t\dbar^*_t$, we first note that since $f\in W^1_{(0,q)}(\Omega)$ implies $N^q_t f$, $\dbar N^q_t f$, and $\dbar^*_t N^q_t f$ are all in $W^1(\Omega)$, we can conclude that
  \[
    \dbar D_T N^q_t f=[\dbar,D_T] N^q_t f+D_T\dbar N^q_t f\in L^2_{(0,q+1)}(\Omega)
  \]
  and
  \[
    \dbar^*_t D_T N^q_t f=[\dbar^*_t,D_T] N^q_t f+D_T\dbar^*_t N^q_t f\in L^2_{(0,q-1)}(\Omega).
  \]
  Thus we can substitute into \eqref{item:connected_boundary_estimate} of Proposition \ref{prop:basic_estimates} and estimate commutators to obtain
  \[
    \norm{D_T N^q_t f}^2_t\leq\eps\left(\norm{D_T\dbar N^q_t f}^2_t+\norm{D_T\dbar^*_t N^q_t f}^2_t+C\norm{N^q_t f}^2_{t,W^1}\right)+C_t\norm{N^q_t f}^2_t.
  \]
  Estimating the normal derivatives using Lemma 2.2 in \cite{Str10} we can estimate the $W^1$ norm of $N^q_t f$, and hence the error term on the right hand side can be absorbed into the left for sufficiently small $\eps>0$.  Hence we have (after possibly increasing $t_\eps$)
  \[
    \norm{N^q_{t}f}^2_{t,W^1}\leq \eps\left(\norm{\dbar N^q_{t}f}^2_{t,W^1}+\norm{\dbar^*_t N^q_{t}f}^2_{t,W^1}\right)+C_t\norm{N^q_{t}f}_t^2.
  \]
  For $f_1\in W^2_{(0,q-1)}(\Omega)$,  let $f=\dbar f_1$ to obtain
  \[
    \norm{N^q_t\dbar f_1}^2_{t,W^1}\leq \eps\norm{\dbar^*_t N^q_t\dbar f_1}^2_{t,W^1}+C_t\norm{N^q_{t}\dbar f_1}_t^2.
  \]
  To estimate the projection, we use integration by parts:
  \begin{multline*}
    \norm{\dbar^*_t N^q_t\dbar f_1}^2_{t,W^1}\leq(N^q_t\dbar f_1,\dbar f_1)_{t,W^1}+C\norm{N^q_t\dbar f_1}_{t,W^1}\norm{\dbar^*_t N^q_t\dbar f_1}_{t,W^1}\\
    +C_t\norm{N^q_t\dbar f_1}_t\norm{\dbar^*_t N^q_t\dbar f_1}_{t,W^1}+C_t\norm{N^q_t\dbar f_1}_{t,W^1}\norm{\dbar^*_t N^q_t\dbar f_1}_t.
  \end{multline*}
  Repeated use of the small constant/large constant inequality allows us to absorb the $\norm{\dbar^*_t N^q_t\dbar f_1}^2_{t,W^1}$ terms on the left-hand side and obtain
  \begin{multline*}
    \norm{\dbar^*_t N^q_t\dbar f_1}^2_{t,W^1}\leq C(N^q_t\dbar f_1,\dbar f_1)_{t,W^1}+C\norm{N^q_t\dbar f_1}_{t,W^1}^2\\
    +C_t\norm{N^q_t\dbar f_1}_t^2+C_t\norm{\dbar^*_t N^q_t\dbar f_1}_t^2.
  \end{multline*}
  A second integration by parts gives us
  \begin{multline*}
    \norm{\dbar^*_t N^q_t\dbar f_1}^2_{t,W^1}\leq C(\dbar^*_t N^q_t\dbar f_1,f_1)_{t,W^1}+C\norm{N^q_t\dbar f_1}_{t,W^1}\norm{f_1}_{t,W^1}\\
    +C_t\norm{N^q_t\dbar f_1}_t\norm{f_1}_{t,W^1}+C\norm{N^q_t\dbar f_1}_{t,W^1}^2
    +C_t\norm{N^q_t\dbar f_1}_t^2+C_t\norm{\dbar^*_t N^q_t\dbar f_1}_t^2,
  \end{multline*}
  and more small constant/large constant inequalities yield
  \begin{multline*}
    \norm{\dbar^*_t N^q_t\dbar f_1}^2_{t,W^1}\leq C\norm{f_1}^2_{t,W^1}+C\norm{N^q_t\dbar f_1}_{t,W^1}^2\\
    +C_t\norm{f_1}^2_t+C_t\norm{N^q_t\dbar f_1}_t^2+C_t\norm{\dbar^*_t N^q_t\dbar f_1}_t^2.
  \end{multline*}
  All of the terms with $C_t$ in front are bounded by $\norm{f_1}^2_t$, so
  \[
    \norm{\dbar^*_t N^q_t\dbar f_1}^2_{t,W^1}\leq C\norm{f_1}^2_{t,W^1}+C\norm{N^q_t\dbar f_1}_{t,W^1}^2
    +C_t\norm{f_1}^2_t.
  \]
  Now we can substitute into the original estimate for $\norm{N^q_t\dbar f_1}_{t,W^1}^2$ and, by making $\eps$ sufficiently small and adjusting $t_\eps$ we obtain
  \[
    \norm{N^q_t\dbar f_1}^2_{t,W^1}\leq \eps\norm{f_1}^2_{t,W^1}+C_t\norm{f_1}_t^2.
  \]
  For $f_2\in W^2_{(0,q+1)}(\Omega)\cap\dom{\dbar^*_t}$, let $f=\dbar^*_t f_2$ and use similar techniques to estimate $N^q_t\dbar^*_t f_2$.  Density lets us generalize to $f_1$ and $f_2$ in $W^1$.

  As before, we interpolate to obtain estimates for $0<s<1$ and use duality to obtain estimates for $-\frac{1}{2}\leq s<0$ (since we now have estimates for the adjoint of each operator).
\end{proof}

\section{Proofs of Main Theorems}
\label{sec:proofs_main_theorems}

When $s<\frac{1}{2}$, we immediately obtain a solution operator for the $\dbar$-Cauchy Problem (see Section 9.1 in \cite{ChSh01}).
\begin{cor}
\label{cor:compact_support_solvability}
  Let $M$ be an $n$-dimensional Stein manifold, let $\Omega$ be a bounded subset of $M$ with connected $C^3$ boundary satisfying weak $Z(n-q-1)$ for some $1\leq q<n-1$, and let $-\frac{1}{2}\leq s<\frac{1}{2}$. For every $f\in W^s_{(0,q+1)}(M)\cap\ker\dbar$ supported in $\overline\Omega$ there exists a $u\in W^s_{(0,q)}(M)$ supported in $\overline\Omega$ such that $\dbar u=f$, and the solution operator is continuous.
\end{cor}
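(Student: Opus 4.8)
The plan is to use the conjugate-linear Hodge star to convert this $\dbar$-Cauchy problem at bidegree $(0,q+1)$ on $\Omega$ into a $\dbar$-Neumann problem at the complementary bidegree $n-q-1$ \emph{on the same domain} $\Omega$, and then to quote the weak $Z(n-q-1)$ theory already established. Note first that $1\le q<n-1$ forces $1\le n-q-1\le n-2$, so $\Omega$---bounded, with connected $C^3$ boundary, satisfying weak $Z(n-q-1)$---is covered at level $n-q-1$ by Theorems \ref{thm:main_theorem1}, \ref{thm:L2_theory}, and \ref{thm:Sobolev_Estimates}: the harmonic space vanishes because $\partial\Omega$ is connected, $\dbar^*_t$ has closed range at the relevant level, $N^{n-q-1}_t$ exists, and $\dbar N^{n-q-1}_t\colon W^s_{(n,n-q-1)}(\Omega)\to W^s_{(n,n-q)}(\Omega)$ is continuous for $-\tfrac12\le s\le1$. (Since $\partial\Omega$ is connected, $\phi=\varphi$ on $\overline\Omega$; as the weights are bounded above and below, the weighted and unweighted $W^s$ spaces agree up to $t$-dependent constants, and we move between them freely.) Let $\star$ denote the conjugate-linear Hodge star taken with respect to the weighted structure, so that $\star\colon\Lambda^{0,k}\to\Lambda^{n,n-k}$ is a smooth, pointwise-unitary bundle map intertwining $\dbar$ on $(0,\cdot)$-forms with $\dbar^*_t$ on $(n,\cdot)$-forms and exchanging the boundary condition $\tau(\cdot)=0$ with membership in $\dom{\dbar^*_t}$. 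The $(n,\cdot)$-form analogues of Sections \ref{sec:basic_estimate}--\ref{sec:Sobolev} hold verbatim, the curvature of $K_M$ contributing only a bounded zeroth-order term that is absorbed into the $O(\norm{f}^2_t)$ error for $t$ large; alternatively one trivializes $K_M$ over a relatively compact Stein neighborhood of $\overline\Omega$ and reduces to the scalar case exactly as in Section 9.1 of \cite{ChSh01}.

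Given $f$ as in the statement, for $s<\tfrac12$ the hypotheses $\dbar f=0$ on $M$ and $\operatorname{supp}f\subset\overline\Omega$ are equivalent (this is the standard dictionary of Section 9.1 of \cite{ChSh01}, with $s<\tfrac12$ ruling out a distributional boundary term on $\partial\Omega$) to the assertion that $w:=\star(f|_\Omega)\in W^s_{(n,n-q-1)}(\Omega)$ satisfies $\dbar^*_t w=0$ and $w\in\dom{\dbar^*_t}$. Since the harmonic space is trivial and $\dbar^*_t$ has closed range, $w\in\range{\dbar^*_t}$, and I claim the canonical choice $v:=\dbar N^{n-q-1}_t w$ solves $\dbar^*_t v=w$: from $\Box^{n-q-1}_t N^{n-q-1}_t w=w$, i.e.\ $\dbar\dbar^*_t N^{n-q-1}_t w+\dbar^*_t\dbar N^{n-q-1}_t w=w$, applying $\dbar^*_t$ and using $(\dbar^*_t)^2=0$ and $\dbar^*_t w=0$ gives $\dbar^*_t\dbar(\dbar^*_t N^{n-q-1}_t w)=0$, whence $\norm{\dbar(\dbar^*_t N^{n-q-1}_t w)}^2_t=0$ and the first term vanishes. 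Here $v\in\dom{\dbar^*_t}$ because $N^{n-q-1}_t w\in\dom{\Box^{n-q-1}_t}$, and $v\in W^s_{(n,n-q)}(\Omega)$ by Theorem \ref{thm:Sobolev_Estimates}\eqref{item:Sobolev_dbar_Neumann} (with a density argument handling $s<0$). Unwinding the Hodge star, $u:=\star^{-1}v$ solves $\dbar u=f$ on $\Omega$, and $v\in\dom{\dbar^*_t}$ forces the extension of $u$ by zero to satisfy $\dbar u=f$ across $\partial\Omega$; for $s<\tfrac12$ this extension lies in $W^s_{(0,q)}(M)$ and is supported in $\overline\Omega$. Composing restriction $W^s(M)\to W^s(\Omega)$, the bounded maps $\star^{\pm1}$, the operator $\dbar N^{n-q-1}_t$, and extension by zero produces the desired continuous solution operator.

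The two steps that are not bookkeeping are: the $(n,\cdot)$-form version of the weak $Z(n-q-1)$ theory of Sections \ref{sec:basic_estimate}--\ref{sec:Sobolev}, which costs only a rereading with the $K_M$-curvature dropped into the error term; and the boundary-condition dictionary of Section 9.1 of \cite{ChSh01}---the precise meaning of the support and $\dbar$-closedness conditions on $f|_\Omega$, and dually on $u$---which is exactly where the restriction $s<\tfrac12$ is essential, since for $s\ge\tfrac12$ extension by zero is obstructed by a nonzero trace on $\partial\Omega$. I expect this second step to need the most care in a full write-up, though it is entirely classical.
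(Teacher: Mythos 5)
Your proposal takes essentially the same route as the paper: the paper's proof simply cites Proposition~3.4 of Shaw (the Hodge-star duality reducing the compactly-supported $\dbar$-Cauchy problem at level $(0,q+1)$ to estimates for $\dbar N^{(n,n-q-1)}$) and observes that the needed $(n,n-q-1)$ estimates follow from the $(0,n-q-1)$ ones since the $(n,0)$-component contributes only bounded curvature terms. You have unpacked exactly that duality argument, including the two points the paper flags---the $K_M$-curvature absorption and the $s<\frac12$ boundary-condition dictionary from Chen--Shaw Section~9.1---so the proposal is a correct, more detailed rendition of the paper's own proof.
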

\begin{proof}
  The proof is identical to that given for Proposition 3.4 in \cite{Shaw03}.  It suffices to have estimates for $\dbar N^{(n,n-q-1)}$.  On $Z(n-q-1)$ domains we have estimates for the operator $\dbar N^{(0,n-q-1)}$, but these are equivalent since the holomorphic component of $(p,q)$-forms has no impact on our estimates (except in the curvature terms, but these are all dominated when $t$ is large).
\end{proof}

Furthermore, we can now use techniques of Shaw \cite{Shaw10} to remove the requirement that the boundary of $\Omega$ be connected.
\begin{cor}
  Let $M$ be an $n$-dimensional Stein manifold, and let $\Omega$ be a bounded subset of $M$ with $C^3$ boundary satisfying weak $Z(q)$ for some $1\leq q<n-1$.  For every $f\in L^2_{(0,q)}(\Omega)\cap\ker\dbar$ there exists a $u\in L^2_{(0,q-1)}(\Omega)$ such that $\dbar u=f$.  In particular, $\mathcal{H}^q(\Omega)=\set{0}$.
\end{cor}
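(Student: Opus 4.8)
The plan is to reduce to the connected‑boundary case, which is already settled, and then to fill in the ``holes'' using the $\dbar$-Cauchy problem, in the spirit of Shaw \cite{Shaw10}. By Lemma~\ref{lem:connected_decomposition} we write $\Omega=\Omega_1/\bigcup_{j=2}^m\overline\Omega_j$, where $\Omega_1$ has connected $C^3$ boundary and satisfies weak $Z(q)$ and each $\Omega_j$ ($2\le j\le m$) has connected $C^3$ boundary and satisfies weak $Z(n-q-1)$. Since $1\le q<n-1$ we also have $1\le n-q-1<n-1$, so Theorem~\ref{thm:Sobolev_Estimates}(4) (case $s=0$) applies on $\Omega_1$ and Corollary~\ref{cor:compact_support_solvability} applies on each $\Omega_j$ (again with $s=0$, which lies in the admissible range). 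If $m=1$ we are done by Theorem~\ref{thm:Sobolev_Estimates}(4), so assume $m\ge 2$ and set $K=\bigcup_{j=2}^m\overline\Omega_j$, a compact subset of $\Omega_1$ with $\Omega=\Omega_1/K$.

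The heart of the matter is the exactness, in the $L^2$ category, of the ``cohomology with supports'' sequence
\[
  H^q_{L^2}(\Omega_1)\To H^q_{L^2}(\Omega)\To H^{q+1}_{L^2,K}(\Omega_1),
\]
where $H^q_{L^2}(X)$ denotes the $L^2$-Dolbeault cohomology of $(0,q)$-forms on $X$ and $H^{q+1}_{L^2,K}(\Omega_1)$ the analogous cohomology computed from $L^2$ forms whose support lies in $K$. The left-hand term vanishes by Theorem~\ref{thm:Sobolev_Estimates}(4) with $s=0$, since $\partial\Omega_1$ is connected. Because the $\overline\Omega_j$ are pairwise disjoint compacta interior to $\Omega_1$, extension by zero identifies $H^{q+1}_{L^2,K}(\Omega_1)$ with $\bigoplus_{j=2}^m H^{q+1}_{L^2,\overline\Omega_j}(M)$, and each summand is exactly the obstruction annihilated by the $\dbar$-Cauchy problem of Corollary~\ref{cor:compact_support_solvability} on $\Omega_j$. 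Granting the sequence above, we obtain $H^q_{L^2}(\Omega)=\set{0}$; that is, every $f\in L^2_{(0,q)}(\Omega)\cap\ker\dbar$ equals $\dbar u$ for some $u\in L^2_{(0,q-1)}(\Omega)$. The last assertion is then immediate: $\range{\dbar}$ at level $(0,q)$ equals $\ker\dbar$ and is therefore closed, so every $h\in\mathcal{H}^q(\Omega)=L^2_{(0,q)}(\Omega)\cap\ker\dbar\cap\ker\dbar^*$ lies both in $\range{\dbar}$ and in $(\range{\dbar})^\bot=\ker\dbar^*$, forcing $h=0$.

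The step I expect to be the main obstacle is constructing the connecting homomorphism $H^q_{L^2}(\Omega)\to H^{q+1}_{L^2,K}(\Omega_1)$ and verifying exactness at $H^q_{L^2}(\Omega)$ within the $L^2$ framework, with uniform estimates. Naively extending a $\dbar$-closed $f$ by zero across $K$ and applying $\dbar$ produces only a $(0,q+1)$-current supported on $\bigcup_{j\ge 2}\partial\Omega_j$, not an $L^2$ form, so one must first replace $f$ by a cohomologous representative that is $\dbar$-exact on a one-sided collar neighborhood of each $\partial\Omega_j$. To produce such collar solutions I would, as in the proof of Lemma~\ref{lem:connected_decomposition}, use the Stein embedding to enlarge each $\Omega_j$ to a strictly pseudoconvex $B_j$ with $\overline{B_j}$ disjoint from $\partial\Omega_1$ and from the other $\overline\Omega_k$, solve $\dbar$ at the needed level on $B_j$ (where $\dbar$ is solvable at every level $1\le r\le n-1$ because $B_j$ is strictly pseudoconvex with connected boundary), and then correct the solution by a $\dbar$-exact term so that it vanishes near $\overline\Omega_j$; a partition of unity patches these corrections together. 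Tracking the supports, and the orientations assigned to the boundary components by Lemma~\ref{lem:connected_decomposition}, is routine but delicate, and it is precisely here that the techniques of \cite{Shaw10} are invoked.
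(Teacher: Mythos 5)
Your first two paragraphs describe essentially the same strategy as the paper: decompose $\Omega=\Omega_1\setminus\bigcup_{j\geq 2}\overline\Omega_j$ via Lemma~\ref{lem:connected_decomposition}, kill $H^q_{L^2}(\Omega_1)$ via Theorem~\ref{thm:Sobolev_Estimates}, and treat the holes via the $\dbar$-Cauchy problem (Corollary~\ref{cor:compact_support_solvability}); the paper does the same, by invoking Shaw's Theorem 3.2 from \cite{Shaw10}, and your ``cohomology with supports'' framing is an equivalent repackaging. You also correctly identify the crux: the zero-extension $\tilde f$ is only in $L^2$, so $\dbar\tilde f$ lives a priori in $W^{-1}$, not in the range $-\tfrac12\le s<\tfrac12$ that Corollary~\ref{cor:compact_support_solvability} covers.

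The gap is in your proposed fix. You say you would ``solve $\dbar$ at the needed level on $B_j$'' where $B_j\supset\overline\Omega_j$ is strictly pseudoconvex, but $f$ is not defined on $B_j$ --- it is only defined on $B_j\setminus\overline\Omega_j$ --- so there is no well-posed $\dbar$-equation on $B_j$ with data $f$. The step you actually need is to solve $\dbar g_j=f$ on a collar of $\partial\Omega_j$ inside $\Omega$, i.e.\ on an \emph{annulus} $B_j'\setminus\overline{B_j''}$ between two strictly pseudoconvex shells; once one has such $g_j$, cutting off by $\chi_j\equiv 1$ near $\partial\Omega_j$ makes $f-\dbar\bigl(\sum_j\chi_j g_j\bigr)$ vanish identically near each inner boundary, so its zero extension to $\Omega_1$ is genuinely $\dbar$-closed in $L^2$ and the Cauchy problem (with its regularity restriction) is bypassed entirely. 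But solving $\dbar$ in $L^2$ at level $q$ on that annulus is itself a nontrivial fact --- it holds for $1\le q\le n-2$ by the strictly pseudoconvex annulus theory of H\"ormander and Shaw, which you do not cite --- and your appeal to solvability on the \emph{solid} $B_j$ with a subsequent ``$\dbar$-exact correction so that it vanishes near $\overline\Omega_j$'' is circular: producing such a correction is exactly the compactly supported Cauchy problem that triggered the regularity issue in the first place. The paper instead follows Shaw's second regularization scheme, working directly with the low-regularity solution $U$ and splitting it into a piece supported near $\partial\Omega_1$ and a compactly supported piece handled on a large ball. Either route can be made to work, but as written your key step is not.

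You should also note that, once unweighted solvability is established, your closing argument for $\mathcal{H}^q(\Omega)=\set{0}$ is fine and is in fact slightly cleaner than the paper's detour through $\mathcal{H}^q_t(\Omega)$ and the $t$-independence of Dolbeault cohomology.
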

\begin{rem}
  This is also known for bounded weak $Z(n-1)$ domains, by Remark \ref{rem:Z(n-1)} and Theorem \ref{thm:L2_theory}.
\end{rem}
\begin{proof}
  Without loss of generality, assume that $\Omega$ is connected.  By Lemma \ref{lem:connected_decomposition} we have the decomposition $\Omega=\Omega_1\backslash\bigcup_{j=2}^m\overline\Omega_j$ where $\Omega_1$ is a weak $Z(q)$ domain with connected boundary and each $\Omega_j$ with $2\leq j\leq m$ is a weak $Z(n-q-1)$ domain with connected boundary and $\overline\Omega_j\subset\Omega_1$.

The proof now follows like the proof of Theorem 3.2 in \cite{Shaw10}, but we must make a few clarifying remarks.
Shaw's proof initially finds a solution $U$ in $W^{-1}(\Omega_1)$, and then proposes two techniques for regularizing this to a solution in $L^2(\Omega_1)$.
The first technique requires approximating $\Omega_1$ from within by smooth strongly pseudoconvex domains.  While we can approximate from within by smooth $Z(q)$ domains,
the role of $\Upsilon$ on these domains is unpredictable, so the constants in our estimates may not be uniform.
Hence, we  use the second technique, which decomposes $U$ into a component supported near the boundary of $\Omega_1$ and a compactly supported component.
This technique requires solving $\dbar$ for compactly supported forms in $\Omega_1$, but as Shaw points out, this can be accomplished by solving $\dbar$ on a ball containing $\Omega_1$.

This gives us vanishing of $\mathcal{H}^q_t(\Omega)$.  However, this is isomorphic to a Dolbeault cohomology group which is independent of $t$,
so the dimension of $\mathcal{H}^q_t(\Omega)$ must also be independent of $t$.  Hence, $\mathcal{H}^q(\Omega)=\set{0}$ as well.
\end{proof}

Now that the vanishing of the space of harmonic $(0,q)$-forms has been established, Theorem \ref{thm:main_theorem1} will follow for the unweighted operators and spaces (see Theorem 4.4.1 in \cite{ChSh01}) from Theorem \ref{thm:L2_theory}.

In the proof of Theorem \ref{thm:Sobolev_Estimates} we required a connected boundary only because it gave us vanishing for the space of harmonic $(0,q)$-forms.  Now that we've established this for disconnected boundaries, the proof of Theorem \ref{thm:Sobolev_Estimates} can be carried through for all bounded domains, as in the statement of Theorem \ref{thm:main_theorem2}.

In \cite{Shaw85}, Shaw uses the $\dbar$ operator to solve $\dbar_b$ extrinsically on smooth pseudoconvex domains.  Since we are working on $C^3$ domains, we will more closely follow the methods of \cite{Shaw03}, which are adapted to work on non-smooth domains. We now prove Theorem \ref{thm:main_theorem3}.
\begin{proof}[Proof of Theorem \ref{thm:main_theorem3}]
We outline the proof, following the construction in \cite{Shaw03}.  By embedding $M$ in $\mathbb{C}^{2n+1}$, we can pullback a ball containing the image of
$\partial\Omega$ to obtain a strictly pseudoconvex set $B$ such that $\overline\Omega\subset B$.  Let $\Omega^+=B\backslash\overline\Omega$ and $\Omega^-=\Omega$.
In \cite{HeLe81} a Martinelli-Bochner-Koppelman type kernel is constructed for Stein manifolds, and in \cite{Lau87} it is shown that the transformation induced by this kernel satisfies a jump formula.
As a result, there exists an integral kernel $K_q(\zeta,z)$ of type $(0,q)$ in $z$ and $(n,n-q-1)$ in $\zeta$ satisfying a Martinelli-Bochner-Koppelman formula such that we can define
  \[
    \int_{\partial\Omega} K_q(\zeta,z)\wedge f(\zeta)=\begin{cases}f^+(z) & z\in\Omega^+\\f^-(z) & z\in\Omega^-\end{cases}
  \]
  (see section 2.4 in \cite{HeLe81}).  If, by an abuse of notation, we extend $f^+(z)$ and $f^-(z)$ to $\partial\Omega$ by considering non-tangential limits, we have the jump formula
  \[
    \tau f^+(z)-\tau f^-(z)=(-1)^q f(z)
  \]
on $\partial\Omega$ where $\tau$ is defined in (\ref{eqn:tau def'n})
(see Proposition 2.3.1 in \cite{Lau87}).  Since $\dbar_b f=0$, we have $\dbar f^+=0$ and $\dbar f^-=0$.
Since $f^+$ and $f^-$ have non-tangential boundary values in $L^2$, they have components in $W^{1/2}$.

Let $E$ denote a linear extension operator from $\Omega^+$ to $B$ that is continuous in the Sobolev spaces $W^s(\Omega^+)$ for $0\leq s \leq 1/2$.
Applying this componentwise to $f^+$, we obtain a $(0,q)$-form $Ef^+$ that is in $W^{1/2}(B)$.
Since $\dbar f^+=0$ on $\Omega^+$, we have $\dbar E f^+$ supported in $\Omega^-$.  Furthermore, $\dbar E f^+$ has components in $W^{-1/2}(M)$, so by Corollary \ref{cor:compact_support_solvability}
there exists $V\in W^{-1/2}_{(0,q)}(M)$ supported in $\Omega^-$ satisfying $\dbar V=\dbar E f^+$.  Thus $\tilde f^+=E f^+-V\in W^{-1/2}_{(0,q)}(B)$ satisfies $\dbar\tilde{f}^+=0$ on $B$ and $\tilde{f}^+=f^+$
on $\Omega^+$.

We may now use the canonical solution operator to define $u^+=\dbar^* N^q_B \tilde f^+$ and $u^-=\dbar^*_t N^q_{t,\Omega} f^-$.  By interior regularity for $N^q_B$, $u^+$ gains a derivative on compact subsets of $B$, so $u^+$ has components in $W^{1/2}$ on a neighborhood of $\overline\Omega$.  By Theorem \ref{thm:Sobolev_Estimates}, $u^-$ also has components in $W^{1/2}$.  Since each of these forms satisfies an elliptic system, they have trace values in $L^2(\partial\Omega)$, so abusing notation we can write $u=(-1)^q(u^+-u^-)$ on $\partial\Omega$.
\end{proof}

\section{Examples}
\label{sec:example}

Our first goal in this section is to motivate Definition \ref{defn:weak_Zq} by constructing an example where this definition holds but earlier definitions fail.  In \cite{HaRa11} we proposed a
more restrictive definition for weak $Z(q)$.  In the context of bounded domains in $\mathbb{C}^n$ with connected boundaries, this was equivalent to $(q-1)$-pseudoconvexity in the sense of \cite{Zam08}.
The key difference is that in $(q-1)$-pseudoconvexity the eigenvalues of $\Upsilon$ must all be zero or one.  This can often be achieved locally by changing the metric.  The critical issue seems to be that if $
\Upsilon$ is continuous then $(q-1)$-pseudoconvexity forces the rank of $\Upsilon$ and the rank of $I-\Upsilon$ to be locally constant.  We will construct an example where this is impossible, but the added
flexibility of Definition \ref{defn:weak_Zq} still allows us to prove closed range results.  We summarize the first result of this section as follows:
\begin{prop}
  \label{prop:key_counterexample}
  There exists a smooth bounded domain $\Omega\subset\mathbb{C}^3$ with $0\in\partial\Omega$ such that:
  \begin{enumerate}
    \item There does not exist a hermitian metric in a neighborhood of the origin such that $\partial\Omega$ is 1-pseudoconvex at the origin.

    \item In the Euclidean metric, $\partial\Omega$ is weakly $Z(2)$.
  \end{enumerate}
\end{prop}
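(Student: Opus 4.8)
The plan is to construct $\Omega$ explicitly via a defining function of the form $\rho(z) = 2\re z_3 + h(z_1, z_2) + O(|z|^3)$ near the origin, where $h$ is a real quadratic (or small-degree) polynomial in $z_1, \bar z_1, z_2, \bar z_2$ chosen so that the Levi form at $\partial\Omega$, restricted to the complex tangent space, has a signature that varies in a controlled way along $\partial\Omega$ near $0$. The key is to arrange that the Levi form is \emph{exactly semidefinite with a one-dimensional kernel at the origin} (so that the tangential eigenvalues are, say, $0, \mu$ with $\mu>0$ at the origin, giving a degenerate $Z(2)$-type point), but that this kernel direction \emph{cannot} be absorbed into a rank-one projection matrix $\Upsilon$ with constant rank in any neighborhood of $0$ in any metric. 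Concretely, one wants the set where the Levi form degenerates to be a curve or point through $0$ along which the would-be eigenvector of the complementary projection rotates or whose multiplicity jumps, so that for part (1) no choice of local hermitian metric can make $\Upsilon$ (with eigenvalues in $\{0,1\}$) both continuous and appropriately ranked — this is the obstruction flagged in the discussion preceding Proposition~\ref{prop:key_counterexample}.

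For part (1), I would argue by contradiction: suppose a local metric exists making $\partial\Omega$ $1$-pseudoconvex at $0$. Then near $0$ there is a continuous rank-$\leq 1$ subbundle $\Upsilon$ of the complex tangent bundle with $\mu_1 + \mu_2 - \mathcal{L}(\Upsilon) \geq 0$ where $\mu_1 \le \mu_2$ are the Levi eigenvalues \emph{in that metric}. Since changing the metric multiplies the Levi form by a positive factor and conjugates by an invertible matrix, the \emph{signature} of the Levi form at each point — in particular the dimension of its kernel and the sign pattern of its nonzero eigenvalues — is a biholomorphic/metric invariant of $\partial\Omega$. I would choose $h$ so that: at $0$ the Levi form has eigenvalues $(0, \mu_0)$ with $\mu_0 > 0$; but at nearby boundary points the Levi form is strictly positive definite (both eigenvalues $>0$), \emph{except} along two transverse directions of approach where it picks up respectively a negative eigenvalue and a second zero, forcing the rank of the positive part to drop. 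Then a continuous $\Upsilon$ satisfying the $1$-pseudoconvexity inequality would have to have rank jumping between $0$ and $1$ as one approaches $0$ along these different directions — contradicting continuity. This is essentially a rank/semicontinuity argument: $1$-pseudoconvexity with $\Upsilon$ of eigenvalues in $\{0,1\}$ forces $\operatorname{rank}\Upsilon$ to be both lower and upper semicontinuous, hence locally constant, but the varying Levi signature near $0$ rules this out.

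For part (2), I would exhibit an explicit $\Upsilon \in T^{1,1}(\partial\Omega)$ near $0$ in the Euclidean metric verifying the three conditions of Definition~\ref{defn:weak_Zq} with $q=2$, $n=3$. Here I use precisely the extra flexibility of allowing eigenvalues in $[0,1]$: rather than a sharp projection, take $\Upsilon$ to interpolate — e.g., $\Upsilon = \chi(z)\,\Pi(z)$ where $\Pi$ is (an approximation of) the projection onto the positive eigenspace of the Levi form and $\chi$ is a smooth cutoff, or more simply take $\Upsilon$ with eigenvalues that are smooth functions transitioning between $0$ and $1$ so as to dodge the rank-constancy obstruction while keeping $\mu_1 + \mu_2 - \mathcal{L}(\Upsilon) \geq 0$ and $\omega(\Upsilon) = \lambda_1 + \lambda_2 \neq 2$ (note $n-1=2$, so we only need $\lambda_1 + \lambda_2 < 2$, which is automatic once not both eigenvalues are identically $1$). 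Away from a neighborhood of $0$, where $\partial\Omega$ is strictly pseudoconvex, weak $Z(2) = Z(n-1)$ is immediate (take $\Upsilon$ with $\omega(\Upsilon) < 2$), and Lemma~\ref{lem:local} patches the local data into a global $\Upsilon$; since $\partial\Omega$ is connected and the orientation is fixed, no cancellation occurs.

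The main obstacle I anticipate is the \emph{design of $h$}: one must simultaneously engineer (a) a degenerate but semidefinite Levi form at $0$, (b) a Levi signature in a punctured neighborhood of $0$ that genuinely forbids a continuous constant-rank complementary projection in \emph{any} metric (the metric-independence is what makes this delicate — one needs a signature-theoretic, not merely eigenvalue-theoretic, obstruction), and (c) enough remaining positivity that the weak $Z(2)$ inequality with a \emph{non}-projection $\Upsilon$ still closes. Balancing (b) against (c) is the crux: a perturbation of $h$ like $h = |z_1|^2 \cdot g(z_2) + (\text{terms})$ with $g$ vanishing appropriately, or a model such as $h(z_1,z_2) = |z_1 z_2|^2 - \epsilon|z_1|^4 + |z_2|^2$-type corrections, should produce a point where the Levi form's positive rank is forced to be nonconstant nearby; I would verify the obstruction by computing the Levi form's eigenvalues along a few explicit curves through $0$ and checking that the sign pattern jumps, then confirm (c) by writing out the inequality with an explicit smooth interpolating $\Upsilon$.
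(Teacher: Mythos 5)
Your proposal correctly identifies the high-level mechanism the paper uses: a metric-invariant obstruction to $1$-pseudoconvexity coming from the rank rigidity of a $\{0,1\}$-eigenvalue $\Upsilon$, versus the flexibility gained by allowing eigenvalues in $[0,1]$ for weak $Z(2)$, together with a patching step (Lemma~\ref{lem:local}) and a bounding step (what becomes Lemma~\ref{lem:bounded_domain}). But what you have written is a plan, not a proof; the entire mathematical content of the proposition lies in producing an explicit $P$ and running the two verifications, and you explicitly defer that step (``the main obstacle I anticipate is the design of $h$''). The paper's working example is $\rho_1(z)=-\im z_3 + P(z_1,z_2)$ with $P(z_1,z_2)=2x\abs{z_2}^2 - xy^4$ (writing $z_1 = x+iy$), whose Levi form on the complex tangent space has the $2\times 2$ matrix $\bigl(\begin{smallmatrix}-3xy^2 & z_2\\ \bar z_2 & 2x\end{smallmatrix}\bigr)$; none of your candidate shapes for $h$ (e.g.\ $\abs{z_1z_2}^2 - \eps\abs{z_1}^4 + \abs{z_2}^2$-type terms) matches this, and your stated desideratum that the Levi form at $0$ be semidefinite of rank one is actually not what happens here --- the Levi form \emph{vanishes identically} at the origin.

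The deeper gap is in your argument for part (1). You want to derive a contradiction purely from the observation that a continuous $\{0,1\}$-eigenvalue $\Upsilon$ has locally constant rank while ``the Levi signature forces the rank to jump.'' But a signature change in the Levi form does not by itself force a rank change in $\Upsilon$: at points where both eigenvalues are positive, a rank-one $\Upsilon$ still works (take $u_1$ to be the smallest eigendirection), and at points where the eigenvalues have signs $(-,0)$, a rank-one $\Upsilon$ is forced to lie along the negative eigendirection. So what is actually constrained is the \emph{direction} spanned by $\Upsilon$, and one needs a genuinely metric-quantified argument: the paper writes the $1$-pseudoconvexity inequality in an arbitrary metric $g^{\bar kj}$ as $\mu^u_1+\mu^u_2\geq 0$ (rank $0$ case) or $\mu^u_1+\mu^u_2 - c^u_{1\bar 1}\geq 0$ (rank $1$ case), observes that the left-hand side vanishes along specific slices ($y=0$, resp.\ $x=z_2=0$), extracts vanishing first-derivative conditions there, and shows that these force the metric (resp.\ $g^{\bar kj}-\bar a_1^k a_1^j$) to be degenerate --- a contradiction. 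Your rank-semicontinuity heuristic does not, as stated, produce this contradiction; it needs to be replaced by a computation of the kind just described. Similarly, for part (2) you would need to exhibit a concrete $\Upsilon$ (the paper uses $\Upsilon_t = i\bar u_1\wedge u_1 + i\bar u_2\wedge u_2 - t(2i\bar L_1\wedge L_1 + 3y^2 i\bar L_2\wedge L_2)$, chosen so that $\mu_1+\mu_2-\mathcal L(\Upsilon_t)=0$ identically) rather than gesture at an interpolating cutoff.
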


In what follows, we assume $M=\mathbb{C}^n$ is equipped with a strictly plurisubharmonic exhaustion function $\varphi$.  After adding a pluriharmonic polynomial to $\varphi$, we may assume $
\varphi(0)=0$, $d\varphi(0)=0$, and $\frac{\partial^2\varphi}{\partial z_j\partial z_k}(0)=0$ for all $1\leq j,k\leq n$.  Under these assumptions, $\varphi$ is strictly convex in a neighborhood of the origin, so $
\varphi(z)-R^2$ is a defining function for a strictly convex domain when $R$ is sufficiently small.

Let $\psi\in C^\infty(\mathbb{R})$ satisfy $\psi''\geq 0$ and $\psi(x)=\abs{x}$ for all $\abs{x}\geq 1$.  Note that this forces $|\psi'|\leq 1$.
Since our main results hold for $C^3$ domains, we could also use a $C^3$ function that is piecewise polynomial to obtain $C^3$ domains, but we will continue to assume that $\psi$ is $C^\infty$ in order to make clear that boundary smoothness does not play a role in our counterexamples.

For $r>0$ set $\psi_r(x)=r\psi\left(\frac{x}{r}\right)$, so that $\psi''_r\geq 0$ and $\psi_r(x)=\abs{x}$ for all $\abs{x}\geq r$.  Note that this forces $\abs{\psi'_r}\leq 1$.  Given any two defining function
$\rho_1$ and $\rho_2$, we wish to construct a smooth approximation to the defining function $\max\set{\rho_1,\rho_2}$.  Hence, we define
\[
  \rho(z)=\frac{1}{2}\psi_r(\rho_1(z)-\rho_2(z))+\frac{1}{2}(\rho_1(z)+\rho_2(z)).
\]
We compute
\begin{align*}
  \dbar\rho&=\frac{1}{2}(1+\psi'_r(\rho_1(z)-\rho_2(z)))\dbar\rho_1+\frac{1}{2}(1-\psi'_r(\rho_1(z)-\rho_2(z)))\dbar\rho_2\\
  \ddbar\rho&=\frac{1}{2}(1+\psi'_r(\rho_1(z)-\rho_2(z)))\ddbar\rho_1+\frac{1}{2}(1-\psi'_r(\rho_1(z)-\rho_2(z)))\ddbar\rho_2\\
  &\qquad+\frac{1}{2}\psi''_r(\rho_1(z)-\rho_2(z))(\partial\rho_1-\partial\rho_2)\wedge(\dbar\rho_1-\dbar\rho_2).
\end{align*}

Let $\rho_1(z)=-\im z_n+P(z_1,\ldots,z_{n-1})$, where $P$ is a smooth function vanishing at the origin.  Then $\dbar\rho_1=-\frac{i}{2} d\bar{z}_n+\dbar P$ and $\ddbar\rho_1=\ddbar P$.  Let $\rho_1$ be
the defining function for an unbounded domain $\Omega_1$.  For any $z\in\partial\Omega$ and $L\in T^{1,0}_z(\mathbb{C}^n)$,
 the mapping $L \mapsto L' :=  L+2i(\partial\rho_1(L))\frac{\partial}{\partial z_n}$
projects $L$ onto $L'\in T^{1,0}_z(\partial\Omega_1)$.
Note that $i\ddbar\rho_1(i\bar L\wedge L)=\mathcal{L}_{\rho_1}(i\bar L'\wedge L')$.  Hence, every eigenvector and eigenvalue of $\mathcal{L}_{\rho_1}$ corresponds
to an eigenvector and eigenvalue of $i\ddbar\rho_1$, and the additional eigenvalue of zero corresponds to the eigenvector $\frac{\partial}{\partial z_n}$.

Let $\rho_2(z)=\varphi(z)-R^2$, and recall that this defines a bounded strictly convex domain for $R>0$ sufficiently small.  Then for $r$ and $R$ sufficiently small, $\rho(z)=\rho_1(z)$ in a neighborhood of the origin, but $\rho(z)$ defines a bounded domain.  For any $L\in T^{1,0}(\partial\Omega)$, we check
\begin{multline*}
  \mathcal{L}_\rho(i\bar{L}\wedge L)\geq \\ \frac{1}{2}(1+\psi'_r(\rho_1(z)-\rho_2(z)))\mathcal{L}_{\rho_1}(i\bar{L}'\wedge L')+\frac{1}{2}(1-\psi'_r(\rho_1(z)-\rho_2(z)))\abs{L}^2.
\end{multline*}
Hence, positive eigenvalues will remain positive, while zero eigenvalues will become positive when $\psi'_r<1$.  We conclude
\begin{lem}
\label{lem:bounded_domain}
  Let $\Omega_1$ be an unbounded domain in $\mathbb{C}^n$ defined by the defining function $\rho_1(z)=-\im z_n+P(z_1,\ldots,z_{n-1})$ where $P$ is a smooth function vanishing at the origin.  There exists an arbitrarily small bounded domain $\Omega$ defined by a smooth defining function $\rho$ such that $\rho=\rho_1$ on a neighborhood of the origin and the Levi form of $\partial\Omega$ has at least as many positive eigenvalues as the Levi form of $\partial\Omega_1$ has nonnegative eigenvalues when $\rho\neq\rho_1$.
\end{lem}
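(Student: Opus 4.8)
The plan is to verify the three requirements for the domain $\Omega=\{\rho<0\}$ built from the smoothed maximum already written down before the statement. Choose $R>0$ small enough that $\varphi$ is strictly convex on $\{\varphi\le R^2\}$ — after rescaling $\varphi$ we may assume $i\ddbar\varphi\ge\abs{\cdot}^2$ there — put $\rho_2=\varphi-R^2$, fix $0<r<R^2/2$, and set $\rho=\tfrac12\psi_r(\rho_1-\rho_2)+\tfrac12(\rho_1+\rho_2)$. Two elementary facts about $\psi$ will be used repeatedly: first, $\psi(y)\ge\abs{y}$ for all $y$, a tangent-line estimate using $\psi''\ge0$ and $\psi=\abs{\cdot}$ on $\abs{y}\ge1$; second, the implication $\psi'(y_0)=1\Rightarrow\psi(y)=y$ for all $y\ge y_0$, since $\psi'$ is nondecreasing and bounded above by $1$. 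From the first fact $\psi_r(x)\ge\abs{x}$, so $\rho\ge\max\{\rho_1,\rho_2\}$; in particular $\Omega\subset\{\varphi<R^2\}$, which is bounded and can be made arbitrarily small by shrinking $R$.

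First I would check that $\rho=\rho_1$ near the origin. Since $\rho_1(0)-\rho_2(0)=R^2>r$, continuity produces a neighborhood of $0$ on which $\rho_1-\rho_2\ge r$; there $\psi_r(\rho_1-\rho_2)=\rho_1-\rho_2$, hence $\rho=\rho_1$. Next, $\rho$ is smooth (as $\psi$ is) and $d\rho\ne0$ on $\partial\Omega$: at $z\in\partial\Omega$ we cannot have both $\rho_1(z)<0$ and $\rho_2(z)<0$, since this would force $\rho(z)\le\max\{\rho_1(z),\rho_2(z)\}<0$, so $\rho_1(z)=0$ or $\rho_2(z)=0$. If $\rho_1(z)=0$, then $\rho-\rho_1\ge0$ attains its minimum value $0$ at $z$, so $d\rho(z)=d\rho_1(z)\ne0$. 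If instead $\rho_2(z)=0$, then $\varphi(z)=R^2\ne\varphi(0)$, so $z\ne0$ and $d\rho(z)=d\rho_2(z)=d\varphi(z)\ne0$, because $\varphi$ has no critical point other than $0$ on the convex ball. So $\Omega$ is a smooth bounded domain with the required localization at the origin.

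It remains to count eigenvalues at a point $z\in\partial\Omega$ with $\rho(z)\ne\rho_1(z)$. From the computation preceding the statement (discarding the nonnegative $\psi''_r$ term), for $L\in T^{1,0}_z(\partial\Omega)$ we have
\[
  \mathcal{L}_\rho(i\bar L\wedge L)\ge a_1\,\mathcal{L}_{\rho_1}(i\bar L'\wedge L')+a_2\abs{L}^2,
\]
with $a_1=\tfrac12(1+\psi'_r(\rho_1-\rho_2))\ge0$, $a_2=\tfrac12(1-\psi'_r(\rho_1-\rho_2))$, and $L'=L+2i(\partial\rho_1(L))\tfrac{\partial}{\partial z_n}$ the projection of $L$ onto $T^{1,0}_z(\partial\Omega_1)$. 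Since $\rho\ge\max\{\rho_1,\rho_2\}$ and $\psi_r\ge\abs{\cdot}$, the assumption $\rho(z)\ne\rho_1(z)$ forces $\psi_r(\rho_1-\rho_2)>\rho_1-\rho_2$ at $z$, so by the second fact about $\psi$ (contrapositively) $\psi'_r(\rho_1-\rho_2)<1$, i.e.\ $a_2>0$. Now let $m$ be the number of nonnegative eigenvalues of $\mathcal{L}_{\rho_1}$ at $z$. Because $i\ddbar\rho_1=i\ddbar P$ has $\tfrac{\partial}{\partial z_n}$ in its null space, its eigenvalues as a Hermitian form on $T^{1,0}_z(\mathbb{C}^n)$ are those of $\mathcal{L}_{\rho_1}$ together with an extra $0$, so it has $m+1$ nonnegative eigenvalues; hence there is an $(m+1)$-dimensional subspace $\widetilde W\subset T^{1,0}_z(\mathbb{C}^n)$ on which $i\ddbar\rho_1\ge0$. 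Then $\widetilde W\cap T^{1,0}_z(\partial\Omega)$ has dimension at least $m$, and there $\mathcal{L}_\rho(i\bar L\wedge L)\ge a_1\,i\ddbar P(i\bar L\wedge L)+a_2\abs{L}^2\ge a_2\abs{L}^2>0$ for $L\ne0$ (using $i\ddbar P(i\bar L'\wedge L')=i\ddbar P(i\bar L\wedge L)$). So $\mathcal{L}_\rho$ at $z$ is positive definite on an $m$-dimensional subspace, hence has at least $m$ positive eigenvalues — exactly the claim. The two points that require care are the nonvanishing of $d\rho$ on all of $\partial\Omega$, handled by observing that $\rho-\rho_1$ (resp.\ $\rho-\rho_2$) attains its minimum at any boundary point lying on $\partial\Omega_1$ (resp.\ $\partial\Omega_2$), and the fact that $a_2>0$ exactly where $\rho\ne\rho_1$, which is what lets the strictly positive term $a_2\abs{L}^2$ absorb the merely semidefinite contribution of $P$; the remaining estimates are routine.
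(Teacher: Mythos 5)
Your argument follows the same route as the paper's (whose ``proof'' is the discussion preceding the lemma statement), and the central eigenvalue count is actually handled more carefully than the paper's one-sentence conclusion. In particular, passing to an $(m+1)$-dimensional subspace $\widetilde W\subset T^{1,0}_z(\mathbb{C}^n)$ on which $i\ddbar\rho_1\geq0$ and then intersecting with $T^{1,0}_z(\partial\Omega)$ to get an $m$-dimensional space on which $\mathcal{L}_\rho>0$ is exactly the dimension count that the paper's ``positive eigenvalues remain positive, zero eigenvalues become positive'' remark leaves implicit; and the observation that $\rho\neq\rho_1$ forces $\psi_r'(\rho_1-\rho_2)<1$, hence a strictly positive $\frac12(1-\psi_r')\abs{L}^2$ contribution, is correct.

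There is, however, a sign error in your verification that $d\rho\neq0$ on $\partial\Omega$. You correctly deduce $\rho\geq\max\{\rho_1,\rho_2\}$ from $\psi_r\geq\abs{\cdot}$, but two sentences later you assert ``this would force $\rho(z)\leq\max\{\rho_1(z),\rho_2(z)\}<0$,'' which is the reverse inequality and is false: a smooth convex $\psi$ with $\psi\geq\abs{\cdot}$ necessarily has $\psi(0)>0$, so $\rho>\max\{\rho_1,\rho_2\}$ strictly wherever $\rho_1=\rho_2$, and in particular there can be points $z\in\partial\Omega$ with $\rho_1(z)=\rho_2(z)=-\tfrac{r}{2}\psi(0)<0$. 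At such points neither $\rho-\rho_1$ nor $\rho-\rho_2$ attains a minimum value of zero, and your argument for $d\rho\neq0$ says nothing. What $\rho\geq\max$ actually gives on $\partial\Omega$ is only $\rho_1\leq0$ and $\rho_2\leq0$, not that one of them vanishes. A complete verification (which the paper also does not spell out) requires a direct estimate on the transition set $\abs{\rho_1-\rho_2}<r$: there $d\rho=\tfrac12(1+\psi_r')\,d\rho_1+\tfrac12(1-\psi_r')\,d\rho_2$ is a convex combination of $d\rho_1=-dy_n+dP$ and $d\rho_2=d\varphi$, and one must rule out cancellation for $r,R$ small, e.g.\ by tracking the $dy_n$-component. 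The eigenvalue argument, which is the substance of the lemma, is nonetheless sound.
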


Now, we are ready to introduce our unbounded domain.
\begin{proof}[Proof of Proposition \ref{prop:key_counterexample}]
  For convenience, we set $z_1=x+iy$ and define $P(z_1,z_2)=2x\abs{z_2}^2-x y^4$.  Let $\Omega_1\subset\mathbb{C}^3$ be defined by $\rho_1(z)=-\im z_3+P(z_1,z_2)$.  Since $\dbar x=\frac{1}{2}d\bar{z}_1$ and $\dbar y=\frac{i}{2}d\bar{z}_1$ we compute:
  \[
    \dbar\rho_1=\left(\abs{z_2}^2-\frac{1}{2}y^4-2i x y^3\right) d\bar{z}_1+2x z_2 d\bar{z}_2-\frac{i}{2} d\bar{z}_3
  \]
  and
  \begin{equation}
  \label{eq:example_levi_form}
    \ddbar\rho_1=-3x y^2 dz_1\wedge d\bar{z}_1+z_2 dz_1\wedge d\bar{z}_2+\bar{z}_2 dz_2\wedge d\bar{z}_1+2x dz_2\wedge d\bar{z}_2.
  \end{equation}
  We choose a basis for $T^{1,0}(\partial\Omega_1)$ by setting $L_j=\frac{\partial}{\partial z_j}+2i\frac{\partial P}{\partial z_j}\frac{\partial}{\partial z_3}$ for $1\leq j\leq 2$.  Under this basis we can represent the Levi form by the matrix $c_{j\bar k}=\mathcal{L}_{\rho_1}(i \bar L_k\wedge L_j)=i\ddbar\rho_1\left(i\frac{\partial}{\partial\bar z_k}\wedge \frac{\partial}{\partial z_j}\right)$.  Note that
  \[
  \det\left(
      \begin{array}{cc}
        -3x y^2 & z_2 \\
        \bar{z}_2 & 2x \\
      \end{array}
    \right)=-6x^2 y^2-\abs{z_2}^2.
  \]
  Since this is nonpositive, the Levi form always has one nonpositive and one nonnegative eigenvalue.  When $z_2\neq 0$ or both $x\neq 0$ and $y\neq 0$, then the determinant is strictly negative so the Levi form has one negative eigenvalue and one positive eigenvalue.  In this case, $Z(2)$ is satisfied.  If the determinant vanishes but $x(2-3y^2)>0$, then the Levi form has one positive eigenvalue and one zero eigenvalue, so $Z(2)$ still holds.

  Both 1-pseudoconvexity and the definition of weak $Z(2)$ given in \cite{HaRa11} require orthonormal coordinates.  For an arbitrary hermitian metric, let $u_1$ and $u_2$ be an orthonormal basis for $T^{1,0}(\partial\Omega_1)$.  Each of these can be written in the form
  \[
    u_j=\sum_{k=1}^2 a_j^k L_k
  \]
  for smooth functions $a_j^k$.  We use $c^u_{j\bar k}$ to denote the Levi form with respect to these new coordinates, and note that
  \begin{equation}
  \label{eq:Levi_transform}
    c^u_{j\bar k}=\sum_{\ell,m=1}^2 a_j^\ell c_{\ell\bar m}\bar{a}^m_k.
  \end{equation}
  The eigenvalues of $c^u$ will be denoted $\mu^u_1\leq\mu^u_2$.  Computing the trace, we have
  \[
    \mu^u_1+\mu^u_2=\sum_{k,\ell,m=1}^2 a_k^\ell c_{\ell\bar m}\bar{a}^m_k=\sum_{\ell,m=1}^2 g^{\bar m\ell}c_{\ell\bar m}
  \]
  where $g^{\bar m\ell}$ is a positive definite $2\times 2$ hermitian matrix.  Substituting \eqref{eq:example_levi_form}, we have
  \begin{equation}
    \label{eq:sum_two_eigenvalues}
    \mu^u_1+\mu^u_2=-3xy^2 g^{\bar 1 1}+2\re(z_2 g^{\bar 2 1})+2x g^{\bar 2 2}.
  \end{equation}

  For $\partial\Omega$ to be 1-pseudoconvex, we need either $\mu^u_1+\mu^u_2\geq 0$ or $\mu^u_1+\mu^u_2-c^u_{1\bar{1}}\geq 0$.  We will show that each of these leads to contradictions.

  First, assume that $\mu^u_1+\mu^u_2\geq 0$.  Set $y=0$, so that by \eqref{eq:sum_two_eigenvalues}, $2\re(z_2 g^{\bar 2 1})+2x g^{\bar 2 2}\geq 0$.  Since the left hand side of this inequality equals zero when $x=z_2=0$, this must be a critical point and hence all first derivatives in $x$ or $z_2$ will also vanish when $x=z_2=0$.  Hence, when $x=z_2=0$ we have $g^{\bar 2 1}=g^{\bar 2 2}=0$.  However, this implies that $g^{\bar m \ell}$ has rank 1, contradicting the fact that $g^{\bar m \ell}$ is nondegenerate.  Thus $\mu^u_1+\mu^u_2$ can not be nonnegative in a neighborhood of the origin.

  Now, we assume that $\mu^u_1+\mu^u_2-c^u_{1\bar{1}}\geq 0$.  Combining our assumption with \eqref{eq:sum_two_eigenvalues} yields
  \[
    -3xy^2(g^{\bar 1 1}-\abs{a_1^1}^2)+2\re(z_2(g^{\bar 2 1}-a_1^1\bar{a}_1^2))+2x(g^{\bar 2 2}-\abs{a_1^2}^2)\geq 0.
  \]
  As before, derivatives in $x$ and $z_2$ must vanish when $x=z_2=0$, so at these points we have $g^{\bar 2 1}-a_1^1\bar{a}_1^2=0$ and
  \[
    -3y^2(g^{\bar 1 1}-\abs{a_1^1}^2)+2(g^{\bar 2 2}-\abs{a_1^2}^2)=0
  \]
  When $y\neq 0$, this means that the rank of $g^{\bar k j}-\bar{a}_1^k a_1^j$ is either zero or two.  However,
  \[
    g^{\bar m\ell}-\bar{a}_1^m a_1^\ell=\sum_{j,k=1}^2\bar{a}_k^m (\delta_{\bar k j}-\delta_{j1}\delta_{k1})a_j^\ell.
  \]
  Hence $g^{\bar k j}-\bar{a}_1^k a_1^j$ must have rank one, contradicting the fact that it must have a rank of zero or two.  We conclude that $\mu^u_1+\mu^u_2-c^u_{1\bar{1}}$ can not be nonnegative in a neighborhood of the origin.

  Since the above construction was carried out for an arbitrary metric, we conclude that there is no metric in a neighborhood of the origin in which $\partial\Omega$ is 1-pseudoconvex.

  We must now show that Definition \ref{defn:weak_Zq} holds for this domain under the Euclidean metric (i.e., $\varphi=\abs{z}^2$).  If $u_1$ and $u_2$ are orthonormal vectors in the span of $L_1$ and $L_2$ then the sum of the two smallest eigenvalues of the Levi form are
  \[
    \mu_1+\mu_2=\mathcal{L}(i\bar{u}_1\wedge u_1+i\bar{u}_2\wedge u_2).
  \]
  If
  \[
    \Upsilon_t=i\bar u_1\wedge u_1+i\bar u_2\wedge u_2-t(2 i\bar L_1\wedge L_1+3 y^2 i\bar L_2\wedge L_2),
  \]
  then
  \[
    \mu_1+\mu_2-\mathcal{L}(\Upsilon_t)=0,
  \]
  so \eqref{item:Levi form_property} of Definition \ref{defn:weak_Zq} is satisfied.  Since $L_1$ and $L_2$ are also in the span of $u_1$ and $u_2$, condition \eqref{item:upper_and_lower_bounds} will follow for $t\geq 0$ sufficiently small.  Finally, $\omega(\Upsilon_t)=2-t(2\abs{L_1}^2+3y^2\abs{L_2}^2)$, so condition \eqref{item:trace} holds for any $t\neq 0$.  Hence $\Omega_1$ satisfies weak $Z(2)$.

  We use Lemma \ref{lem:bounded_domain} to turn our example into a bounded domain.  We have already shown that weak $Z(2)$ is satisfied in the interior of the set where $\rho=\rho_1$, and it is automatically satisfied at all points where $\rho\neq\rho_1$ because the usual definition of $Z(2)$ holds, so it remains to check the points on the boundary of the set where $\rho=\rho_1$.  At such points, let $\tilde{L}_1$ and $\tilde{L}_2$ be smooth extensions of $L_1$ and $L_2$.  If $\tilde{u}_1$ and $\tilde{u}_2$ are orthonormal vectors in the span of $\tilde{L}_1$ and $\tilde{L}_2$, we define
  \[
    \Upsilon_t=i \bar{ \tilde {u}}_1\wedge \tilde u_1+i\bar{ \tilde{ u}}_2\wedge \tilde u_2-t(2 i\bar{ \tilde{ L}}_1\wedge \tilde L_1+3 y^2 i\bar{ \tilde{ L}}_2\wedge \tilde L_2).
  \]
  We still have
  \[
    \mu_1+\mu_2-\mathcal{L}(\Upsilon_t)=t\mathcal{L}(2 i\bar{ \tilde{ L}}_1\wedge \tilde L_1+3 y^2 i\bar{ \tilde{ L}}_2\wedge \tilde L_2).
  \]
  This quantity is zero when $\rho=\rho_1$, and by construction we are adding a positive component to $\mathcal{L}$ when we transition to $\rho_2$, so $\mu_1+\mu_2-\mathcal{L}(\Upsilon_t)\geq 0$ in a neighborhood of the set where $\rho=\rho_1$.  Using the patching argument of Lemma \ref{lem:local}, a global $\Upsilon$ can be obtained.
\end{proof}

We will also construct an example demonstrating that our condition is not invariant under changes of metric.  Roughly speaking, this example contains a direction which is poorly behaved (at some points in a neighborhood of the origin this direction is an eigenvector of the Levi form with a negative eigenvalue, while at other points it can be an eigenvector corresponding to the largest positive eigenvalue).  In order for weak $Z(2)$ to be satisfied, the metric must be chosen to minimize the size of this direction (and prevent it from corresponding to the largest positive eigenvalue).
\begin{prop}
  \label{prop:metric_counterexample}
  There exists a bounded domain $\Omega\subset\mathbb{C}^4$ with smooth boundary such that
  \begin{enumerate}
    \item $\partial\Omega$ does not satisfy weak $Z(2)$ under the Euclidean metric.
    \item There exists a strictly plurisubharmonic exhaustion function $\varphi$ for $\mathbb{C}^4$ such that $\partial\Omega$ satisfies weak $Z(2)$ with respect to the metric $\omega=i\ddbar\varphi$.
  \end{enumerate}
\end{prop}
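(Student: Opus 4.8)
The plan is to mimic the construction of Proposition~\ref{prop:key_counterexample}. First I would build an unbounded model domain $\Omega_1\subset\mathbb C^4$ with defining function $\rho_1(z)=-\im z_4+P(z_1,z_2,z_3)$, $P$ a polynomial vanishing to second order at $0$, and then, as in the proof of Proposition~\ref{prop:key_counterexample}, invoke Lemma~\ref{lem:bounded_domain} to pass to a bounded domain $\Omega$ coinciding with $\Omega_1$ near $0$ and automatically satisfying $Z(2)$ wherever $\rho\ne\rho_1$, so that the whole question is local at $0$. The crux is to choose $P$ so that among the CR vector fields $L_j=\p_{z_j}+2i(\p P/\p z_j)\p_{z_4}$, one direction --- say $L_3$ --- is wild: along suitable curves through $0$, in the Euclidean metric $L_3$ is an eigenvector of the Levi form whose eigenvalue changes sign, being \emph{negative} (hence the smallest eigenvalue) at some points and the \emph{largest} eigenvalue at others, while the $2\times 2$ block in $L_1,L_2$ by itself never yields $Z(2)$; in fact one can arrange that block to reproduce the Levi form of Proposition~\ref{prop:key_counterexample}, so that only the weak-$Z$ mechanism could conceivably help. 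Concretely, one lets $c_{3\bar3}$ and the mixed entries $c_{1\bar3},c_{2\bar3}$ of the Levi matrix be governed by independent monomials in $z_3$ and $\im z_1$ calibrated to produce exactly these sign reversals.

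For part~(1), I would argue by contradiction as in Proposition~\ref{prop:key_counterexample}. Suppose a continuous real $\Upsilon$ satisfying \eqref{item:upper_and_lower_bounds}--\eqref{item:trace} of Definition~\ref{defn:weak_Zq} exists near $0$ in the Euclidean metric. Since $\partial\Omega$ is connected and strictly pseudoconvex wherever $\rho\ne\rho_1$, Lemma~\ref{lem:count_eigenvalues} together with continuity forces $\omega(\Upsilon)<2$ on a full neighborhood of $0$, so the two largest Levi eigenvalues are nonnegative there. Expressing $\Upsilon$ in the Euclidean orthonormal CR frame and, at a given point, in a frame diagonalizing the Levi form, condition~\eqref{item:Levi form_property} reads $\mu_1(1-b^{\bar11})+\mu_2(1-b^{\bar22})-\mu_3 b^{\bar33}\ge0$. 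At points where the normalized $L_3$ is the $\mu_3$-eigenvector, the weight that $\Upsilon$ assigns to the $L_3$ direction is forced close to $0$ (otherwise $\mathcal L(\Upsilon)$ picks up a large positive contribution, violating $\mathcal L(\Upsilon)\le\mu_1+\mu_2$), whereas at points where $L_3$ carries a negative eigenvalue that weight is forced close to $1$. Both regimes accumulate at $0$, and since the left-hand side of \eqref{item:Levi form_property} vanishes at $0$, so that $0$ is a critical point of it, differentiating along the distinguished curves --- the analogue of the rank-of-$g^{\bar kj}$ computation in Proposition~\ref{prop:key_counterexample} --- contradicts the continuity of $\Upsilon$. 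This time the obstruction is forced by the rigidity of the fixed Euclidean metric, not by $1$-pseudoconvexity.

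For part~(2), I would produce an explicit strictly plurisubharmonic exhaustion $\varphi$ of $\mathbb C^4$ of the form $\abs{z}^2$ plus a small multiple of $\abs{z_j}^2$ for an appropriate index (renormalized as in Section~\ref{sec:example} so that $\varphi(0)=0$, $d\varphi(0)=0$ and the holomorphic Hessian vanishes at $0$, and used also as $\rho_2=\varphi-R^2$ in Lemma~\ref{lem:bounded_domain}). The effect of passing to $\omega=i\ddbar\varphi$ is to reweight the Levi form near $0$ just enough that $L_3$ no longer realizes an extreme eigenvalue, which removes precisely the obstruction from part~(1); the simultaneous change in the admissible range for $\Upsilon$ (the Euclidean constraint on the eigenvalues of $\Upsilon$ moves with the metric) must be tracked to make sure it still allows a workable choice. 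One then writes down an admissible $\Upsilon$ directly --- a nonnegative combination of the $i\bar L_j\wedge L_j$, chosen as in Proposition~\ref{prop:key_counterexample} so that $\mu_1+\mu_2-\mathcal L(\Upsilon)$ vanishes identically, with coefficients calibrated against $\varphi$ so that its eigenvalues lie in $[0,1]$ with respect to $\omega$ and $\omega(\Upsilon)\ne2$ --- and globalizes it over $\partial\Omega$ via Lemma~\ref{lem:local}.

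The main obstacle is the tension between the two demands on $P$: the monomials must be strong enough that the Euclidean obstruction in part~(1) is genuine and survives differentiation at $0$, yet tame enough that a bounded change of metric absorbs them in part~(2). The delicate quantitative step is verifying condition~\eqref{item:Levi form_property} uniformly on a neighborhood of $0$ for the reweighted metric while simultaneously keeping the eigenvalues of $\Upsilon$ inside $[0,1]$ --- a constraint that is itself metric-dependent. Once $P$ and $\varphi$ are calibrated, the bounded-domain reduction, the patching, and the verification of the remaining conditions of Definition~\ref{defn:weak_Zq} are routine, as in Proposition~\ref{prop:key_counterexample}.
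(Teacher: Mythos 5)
Your plan captures the narrative idea behind the paper's example---a single CR direction that is sometimes a negative Levi eigendirection and sometimes the largest, so that the metric must be chosen to suppress its size---but it diverges from the paper's actual construction and argument in ways that matter, and one claim is concretely wrong.

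\emph{Shape of the example.} You propose to build the $3\times3$ Levi matrix so that the $L_1,L_2$ block reproduces the Levi form of Proposition~\ref{prop:key_counterexample}, with $L_3$ as the wild direction. The paper does something different: it takes the one-parameter family of exhaustion functions $\varphi_t = t\abs{z_1}^2 + \sum_{j\geq2}\abs{z_j}^2$ and a $P$ for which the $L_2,L_3$ block of $i\ddbar\rho$ is positive definite off a thin set; the poorly behaved direction is $L_1$, whose Levi coefficient $-36\abs{z_1}^2+3\abs{z_2}^2+3\abs{z_3}^2$ changes sign. There is no embedded copy of the Proposition~\ref{prop:key_counterexample} block. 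Your version could presumably be made to work after relabeling, but you never write down $P$, and the ``calibrated independent monomials'' you gesture at would still have to be produced.

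\emph{Part (2) is not a small perturbation.} This is the most important gap. You write that the new exhaustion should be $\abs{z}^2$ plus ``a small multiple of $\abs{z_j}^2$''. The paper's calculation shows the opposite: testing $(\Tr\mathcal{L}-\mathcal{L}(\Upsilon_t))\omega_t - i\ddbar\rho \geq 0$ against $i\bar L_1^t\wedge L_1^t$ forces $t\geq 3/2$, and the positivity of the full $3\times3$ form is verified only for $t\geq 6$. So one must replace $\abs{z_1}^2$ by $6\abs{z_1}^2$ (or larger), which is a finite, structural rescaling of the metric, not a small deformation. A small multiple would leave you below the threshold $t\geq 3/2$ and weak $Z(2)$ would still fail.

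\emph{Part (1) uses a different mechanism.} You propose a rank-of-matrix/continuity contradiction modeled on Proposition~\ref{prop:key_counterexample}. The paper instead pins down $\Upsilon$ near the origin explicitly: along the slices $y=z_2=z_3=0$ and $x=z_2=z_3=0$, condition~\eqref{item:Levi form_property} together with the eigenvalue bounds in \eqref{item:upper_and_lower_bounds} forces $b^{\bar 2 2}_t=b^{\bar3 3}_t=0$ and $b^{\bar1 1}_t=1+O(\abs{z}^6)$, hence $\Upsilon_t = i\bar L_1^t\wedge L_1^t + O(\abs{z}^2)$. One then inserts this into the equivalent reformulation $(\Tr\mathcal{L}-\mathcal{L}(\Upsilon_t))\omega_t-i\ddbar\rho\geq0$ and reads off the sharp threshold on $t$ from the $\abs{z_2}^2$ coefficient. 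This is a direct computation, not a contradiction with continuity of $\Upsilon$ as in the previous proposition; the Euclidean case fails simply because $t=1<3/2$. Your continuity argument might be salvageable, but it is not the route the paper takes, and as written it does not give you the quantitative information (the threshold in $t$) that is needed to match part~(2).

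In short: the high-level intuition is right, but the construction of $P$, the fact that the metric change must be $O(1)$ rather than small, and the quantitative slice analysis for part~(1) are all materially different from what you sketch and would need to be filled in along the paper's lines.
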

\begin{proof}
  As before, we can construct an unbounded domain and use Lemma \ref{lem:bounded_domain} to make this bounded.  To minimize the number of subscripts,
  we will write $\rho$ in place of the $\rho_1$ used in the statement of Lemma \ref{lem:bounded_domain}.  As above, let $z_1=x+iy$ and $\rho(z)=-\im z_4+P(z_1,z_2,z_3)$ where
  $P(z_1,z_2,z_3)=-9\abs{z_1}^4+6(x^2\abs{z_2}^2+y^2\abs{z_3}^2)+\abs{z_2}^2\abs{z_3}^2+\frac{1}{4}(\abs{z_2}^4+\abs{z_3}^4)$.
  We choose a basis for $T^{1,0}(\partial\Omega)$ by setting $L_j=\frac{\partial}{\partial z_j}+2i\frac{\partial P}{\partial z_j}\frac{\partial}{\partial z_4}$ for $1\leq j\leq 3$.

  We compute
  \begin{multline*}
    \dbar\rho=\left(-18\abs{z_1}^2 z_1+6x\abs{z_2}^2+6iy\abs{z_3}^2\right)d\bar{z}_1+\left(6x^2z_2+\abs{z_3}^2z_2+\frac{1}{2}\abs{z_2}^2z_2\right)d\bar{z}_2\\+\left(6y^2z_3+\abs{z_2}^2z_3+\frac{1}{2}\abs{z_3}^2z_3\right)d\bar{z}_3-\frac{i}{2}d\bar{z}_4
  \end{multline*}
  and
  \begin{multline*}
    i\ddbar\rho=i(-36\abs{z_1}^2+3\abs{z_2}^2+3\abs{z_3}^2)dz_1\wedge d\bar{z}_1+i(6x^2+\abs{z_2}^2+\abs{z_3}^2)dz_2\wedge d\bar{z}_2\\
    +i(6y^2+\abs{z_2}^2+\abs{z_3}^2)dz_3\wedge d\bar{z}_3+6ix(z_2dz_1\wedge d\bar{z}_2+\bar{z}_2dz_2\wedge d\bar{z}_1)\\
    +6iy(-iz_3dz_1\wedge d\bar{z}_3+i\bar{z}_3dz_3\wedge d\bar{z}_1)+i(\bar{z}_2z_3dz_2\wedge d\bar{z}_3+z_2\bar{z}_3dz_3\wedge d\bar{z}_2).
  \end{multline*}
  If we consider the $2\times 2$ block spanned by $dz_2$ and $dz_3$, we can see that this is positive definite unless $z_2=z_3=0$ and either $x=0$ or $y=0$.  Hence, the Levi form has at least two positive eigenvalues and satisfies $Z(2)$ except on this set.

  To define our metric, we let $\varphi_t(z)=t\abs{z_1}^2+\sum_{j=2}^4\abs{z_j}^2$ for some fixed $t\geq 1$ and use the K\"ahler form $\omega_t=i\ddbar\varphi_t$.  Note that $\omega_1$ is the Euclidean metric.
  We set $L_1^t=\frac{1}{\sqrt{t}}L_1$, $L_2^t=L_2$ and $L_3^t=L_3$.  If we use the notation $P^t_j=L^t_j P$, then in this basis our metric can be written
  $g^t_{j\bar k}= i\ddbar\varphi_t(i\bar L_k^t\wedge L^t_j) = \left<L_j^t,L_k^t\right>=\delta_{jk}+4P_j^t\bar{P}_k^t$.

  We write
  \[
    \Upsilon_t=i\sum_{j,k=1}^{n-1} b^{\bar k j}_t\bar{L}_k^t\wedge L_j^t.
  \]
  When $y=z_2=z_3=0$, we have $i\ddbar\rho=-36ix^2 dz_1\wedge d\bar{z}_1+6i x^2 dz_2\wedge d\bar{z}_2$,  $P^t_j=0$ if $j=2,3$ and $P^t_1 = -\frac{18}{\sqrt t}x^3$.
  Consequently, the eigenvalues of $\mathcal{L}$ are $\mu_1=\frac{-36x^2}{t(1+4\abs{P_1^t}^2)}=\frac{-36x^2}{t}+O(x^8)$,
  $\mu_2=0$, and $\mu_3=6x^2$.  To check condition \eqref{item:Levi form_property} of Definition \ref{defn:weak_Zq}, we compute
  \[
    \mu_1+\mu_2-\mathcal{L}(\Upsilon_t)=\frac{-36x^2}{t}(1-b^{\bar 1 1}_t)-6x^2b^{\bar 2 2}_t+O(x^8).
  \]
  Since $b^{\bar 2 2}_t\geq 0$ and $1\geq b^{\bar 1 1}_t+O(x^6)$, (by condition \eqref{item:upper_and_lower_bounds} of Definition \ref{defn:weak_Zq}) nonnegativity requires
  $b^{\bar 2 2}_t=0$ and $b^{\bar 1 1}_t=1+O(x^6)$.  Similar computations when $x=z_2=z_3=0$ require $b^{\bar 3 3}_t=0$ on this set.
  At the origin, $\Upsilon_t$ is now represented by a matrix whose eigenvalues are bounded between 0 and 1 (by condition \eqref{item:upper_and_lower_bounds} of Definition \ref{defn:weak_Zq} again) with diagonal entries of 0 and 1, so the off-diagonal entries must vanish.  Hence, $\Upsilon_t=i \bar{L}_1^t\wedge L_1^t$ at the origin.  Since nonvanishing terms of order $O(\abs{z})$ would cause the eigenvalues of $\Upsilon_t$ to grow larger than 1 or smaller than 0 in some direction, we conclude that $\Upsilon_t=i\bar{L}_1^t\wedge L_1^t+O(\abs{z}^2)$ near the origin.

  We note that $\mu_1+\mu_2-\mathcal{L}(\Upsilon_t)\geq 0$ (condition \eqref{item:Levi form_property} in Definition \ref{defn:weak_Zq}) is equivalent to $\Tr\mathcal{L}-\mathcal{L}(\Upsilon_t)\geq \mu_3$, which is in turn equivalent to $(\Tr\mathcal{L}-\mathcal{L}(\Upsilon_t))\omega_t-i\ddbar\rho\geq 0$ on $T^{1,1}(\partial\Omega)$.  Note that the inverse of our metric is
  \[
  g^{\bar k j}_t=\delta_{jk}-\frac{4}{1+4\abs{P^t}^2}P_k^t\bar{P}_j^t = \delta_{jk} - 4 P_k^t\bar P^t_j(1+O(|P^t|^2)).
  \]
  If $\{u_j\}$ is an orthonormal basis of $T^{1,0}(\p\Omega)$ near 0, then
  \begin{multline*}
    \Tr\mathcal{L} = \sum_{\ell=1}^3 \mathcal{L}(i\bar u_\ell \wedge u_\ell) = \sum_{j,k=1}^3 g^{\bar kj}_t \mathcal L (i \bar L^t_k \wedge L^t_j )
   = \frac{1}{t}(-36\abs{z_1}^2+3\abs{z_2}^2+3\abs{z_3}^2)g^{\bar 1 1}_t\\
   +6\abs{z_1}^2+2(\abs{z_2}^2+\abs{z_3}^2)+O((\abs{z_2}^2+\abs{z_3}^2)\abs{z}^6),
  \end{multline*}
  where
  \[
    g^{\bar 1 1}_t=\frac{1}{1+1296t^{-1}\abs{z_1}^6}+O((\abs{z_2}^2+\abs{z_3}^2)\abs{z}^4).
  \]
  The $g^{\bar 1 1}_t$ term is left intact in the computation because it does not lead to a term of the form $O((\abs{z_2}^2+\abs{z_3}^2)\abs{z}^6)$. In any case,
  \[
    \Tr\mathcal{L}-\mathcal{L}(\Upsilon_t)=6\abs{z_1}^2+2(\abs{z_2}^2+\abs{z_3}^2)+O(\abs{z}^4).
  \]
  If we test positivity of $((\Tr\mathcal{L}-\mathcal{L}(\Upsilon_t))\omega_t-i\ddbar\rho)(i\bar L_1^t\wedge L_1^t)$, we see that this is equivalent to
  \[
    6\abs{z_1}^2+2(\abs{z_2}^2+\abs{z_3}^2)-\frac{1}{t}(-36\abs{z_1}^2+3\abs{z_2}^2+3\abs{z_3}^2)+O(\abs{z}^4)\geq 0.
  \]
  Considering coefficients of $\abs{z_2}^2$, it is necessary that $2\geq\frac{3}{t}$, or $t\geq\frac{3}{2}$.  Since the Euclidean metric corresponds to $t=1$, weak $Z(2)$ will fail for the Euclidean metric.

  For the positive result, we set $\Upsilon_t=i(g_{1\bar 1}^t)^{-1}\bar{L}_1^t\wedge L_1^t$, so that conditions \eqref{item:upper_and_lower_bounds} and \eqref{item:trace} are immediately satisfied.  Note that $(g_{1\bar 1}^t)^{-1}=g^{\bar 1 1}_t+O((\abs{z_2}^2+\abs{z_3}^2)\abs{z}^4)$, so we have
  \[
    \Tr\mathcal{L}-\mathcal{L}(\Upsilon_t)=6\abs{z_1}^2+2(\abs{z_2}^2+\abs{z_3}^2)+O((\abs{z_2}^2+\abs{z_3}^2)\abs{z}^6).
  \]
  For $\abs{z}$ sufficiently small, this gives us
  \[
    \Tr\mathcal{L}-\mathcal{L}(\Upsilon_t)\geq 6\abs{z_1}^2+\frac{3}{2}(\abs{z_2}^2+\abs{z_3}^2).
  \]
  Hence,
	  \begin{multline*}
    (\Tr\mathcal{L}-\mathcal{L}(\Upsilon_t))\omega_t-i\ddbar\rho\geq
    i\left(6(t+6)\abs{z_1}^2+\frac{3}{2}(t-2)(\abs{z_2}^2+\abs{z_3}^2)\right)dz_1\wedge d\bar{z}_1\\+i\left(6y^2+\frac{1}{2}(\abs{z_2}^2+\abs{z_3}^2)\right)dz_2\wedge d\bar{z}_2
    +i\left(6x^2+\frac{1}{2}(\abs{z_2}^2+\abs{z_3}^2)\right)dz_3\wedge d\bar{z}_3\\-6ix(z_2dz_1\wedge d\bar{z}_2+\bar{z}_2dz_2\wedge d\bar{z}_1)
    -6iy(-iz_3dz_1\wedge d\bar{z}_3+i\bar{z}_3dz_3\wedge d\bar{z}_1)\\-i(\bar{z}_2z_3dz_2\wedge d\bar{z}_3+z_2\bar{z}_3dz_3\wedge d\bar{z}_2).
  \end{multline*}
  To check that this is positive for $t\geq 6$, we first evaluate the determinant of the $2\times 2$ minor spanned by $dz_2$ and $dz_3$:
  \begin{multline*}
    \left(6y^2+\frac{1}{2}(\abs{z_2}^2+\abs{z_3}^2)\right)\left(6x^2+\frac{1}{2}(\abs{z_2}^2+\abs{z_3}^2)\right)-\abs{z_2}^2\abs{z_3}^2=\\
    36 x^2y^2+3\abs{z_1}^2(\abs{z_2}^2+\abs{z_3}^2)+\frac{1}{4}(\abs{z_2}^2-\abs{z_3}^2)^2.
  \end{multline*}
  Since this determinant is nonnegative and the terms of the diagonal are nonnegative, it follows from
  \cite[Theorem 4.3.8]{HoJo85} that the form under consideration must have at least two nonnegative eigenvalues.  Noting that the determinant of this minor is bounded below by
  $3\abs{z_1}^2(\abs{z_2}^2+\abs{z_3}^2)$, we can bound the determinant of the entire $3\times 3$ form below by
  \begin{multline*}
    3\abs{z_1}^2(\abs{z_2}^2+\abs{z_3}^2)\left(6(t+6)\abs{z_1}^2+\frac{3}{2}(t-2)(\abs{z_2}^2+\abs{z_3}^2)\right)\\
    -36y^2\abs{z_3}^2\left(6y^2+\frac{1}{2}(\abs{z_2}^2+\abs{z_3}^2)\right)-36x^2\abs{z_2}^2\left(6x^2+\frac{1}{2}(\abs{z_2}^2+\abs{z_3}^2)\right).
  \end{multline*}
  Substituting $t\geq 6$ into this we can check that the positive terms dominate the negative terms, so the form is positive and weak $Z(2)$ is satisfied for $\abs{z}$ sufficiently small.  Lemma \ref{lem:bounded_domain} can now be used to create a bounded domain where $\abs{z}$ remains sufficiently small for weak $Z(2)$ to hold on the entire domain.
\end{proof}

\bibliographystyle{alpha}
\bibliography{mybib3}
\end{document}